\newtheorem{theorem}{Theorem}
\theoremstyle{plain}
\newtheorem{corollary}{Corollary}
\newtheorem{definition}{Definition}
\newtheorem{lemma}{Lemma}
\newtheorem{notation}{Notation}
\newtheorem{proposition}{Proposition}
\newtheorem{remark}{Remark}
\numberwithin{equation}{section}
\begin{document}
\title[Maurer-Cartan equation]{Maurer-Cartan equation in the DGLA of graded
derivations }
\dedicatory{In the memory of Pierre Dolbeault}
\author{Paolo de Bartolomeis}
\address{Universit\`{a} degli Studi di Firenze\\
Dipartimento di Matematica e Informatica "U. Dini"\\
Viale Morgagni 67/A I-50134 \\
Firenze, Italia}
\author{Andrei Iordan}
\address{Sorbonne Universit\'{e}\\
Institut de Math\'{e}matiques de Jussieu-Paris Rive Gauche\\
UMR 7586 du CNRS, case 247 \\
4 Place Jussieu \\
75252 Paris Cedex 05\\
France}
\email{andrei.iordan@imj-prg.fr}
\date{September, 17, 2018}
\subjclass{Primary 32G10, 16W25, 53C12}
\keywords{ Differential Graded Lie Algebras, Maurer-Cartan equation,
Foliations, Graded derivations}
\dedicatory{In the memory of Pierre Dolbeault}

\begin{abstract}
Let $M$ be a smooth manifold and $\Phi $ a differential $1$-form on $M$ with
values in the tangent bundle $TM$. We construct canonical solutions $e_{\Phi
}$ of Maurer-Cartan equation in the DGLA of graded derivations $\mathcal{D}%
^{\ast }\left( M\right) $ of differential forms on $M$ by means of
deformations of the $d$ operator depending on $\Phi $. This yields to a
classification of the canonical solutions of the Maurer-Cartan equation
according to their type: $e_{\Phi }$ is of finite type $r$ if there exists $%
r\in \mathbb{N}$ such that $\Phi ^{r}\left[ \Phi ,\Phi \right] _{\mathcal{FN}%
}=0$ and $r$ is minimal with this property, where $\left[ \cdot ,\cdot %
\right] _{\mathcal{FN}}$ is the Fr\"{o}licher-Nijenhuis bracket. A
distribution $\xi \subset TM$ \ of codimension $k\geqslant 1$ is integrable
if and only if the canonical solution $e_{\Phi }$ associated to the
endomorphism $\Phi $ of $TM$ which is trivial on $\xi $ and equal to the
identity on a complement of $\xi $ in $TM$ is of finite type $\leqslant $ $1$%
, respectively of finite type $0$ if $k=1$.
\end{abstract}

\maketitle

\section{Introduction}

In \cite{Kodaira61}, one of the last papers of their seminal cycle of works
on deformations of differentiable and complex structures, K. Kodaira and D.
C. Spencer studied the deformations of multifoliate structures. A $\mathcal{P%
}$-multifoliate structure on an orientable manifold $X$ of dimension $n$ is
an atlas $\left( U_{i},\left( x_{i}^{\alpha }\right) _{\alpha =1,\cdot \cdot
\cdot ,n}\right) $ such that the changes of coordinates verify%
\begin{equation*}
\frac{\partial x_{i}^{\alpha }}{\partial x_{k}^{\beta }}=0\ for\ \beta
\nsucceq \alpha ,
\end{equation*}%
where $\left( \mathcal{P},\geqq \right) $ is a finite partially ordered set, 
$\left\{ \alpha \right\} $ the set of integers $\alpha =1,2,\cdot \cdot
\cdot ,n$ such there is given a map $\left\{ \alpha \right\} \mapsto \left[
\alpha \right] $ of $\alpha $ onto $\mathcal{P}$ and the order relation $%
"\succapprox "$ is defined by $\alpha >\beta $ if and only if $\left[ \alpha %
\right] >\left[ \beta \right] $, $\alpha \thicksim \beta $ if and only if $%
\left[ \alpha \right] =\left[ \beta \right] $. An usual foliation is the
particular case when $\mathcal{P}=\left\{ a,b\right\} $, $a>b$.

They defined a DGLA structure $\left( \mathcal{D}^{\ast }\left( M\right)
,\daleth ,\left[ \cdot ,\cdot \right] \right) $ on the graded algebra of
graded derivations introduced by Fr\"{o}licher and Nijenhuis in \cite%
{Frolicher56} and the deformations of the multifoliate structures are
related to the solutions of the Maurer-Cartan equation in this algebra. This
was done in the spirit of \cite{Nijenhuis66}, where A. Nijenhuis and R.\ W.
Richardson adapted a theory initiated by M. Gerstenhaber \cite%
{Gerstenhaber64} and proved the connection between the deformations of
complex analytic structures and the theory of differential graded Lie
algebras (DGLA).

In the paper \cite{PdbAI15}, the authors elaborated a theory of deformations
of integrable distributions of codimension $1$ in smooth manifolds. Our
approach was different of K. Kodaira and D. C. Spencer's in \cite{Kodaira61}
(see remark 14 of \cite{PdbAI15} for a discussion). We considered in \cite%
{PdbAI15} only deformations of codimension $1$ foliations, the DGLA algebra $%
\left( \mathcal{Z}^{\ast }\left( L\right) ,\delta ,\left\{ \cdot ,\cdot
\right\} \right) $ associated to a codimension $1$ foliation on a
co-oriented manifold $L$ being a subalgebra of the the algebra $\left(
\Lambda ^{\ast }\left( L\right) ,\delta ,\left\{ \cdot ,\cdot \right\}
\right) $ of differential forms on $L$.\ Its definition depends on the
choice of a DGLA defining couple $\left( \gamma ,X\right) $, where $\gamma $
is a $1$-differential form on $L$ and $X$ is a vector field on $L$ such that 
$\gamma \left( X\right) =1$, but the cohomology classes of the underlying
differential vector space structure do not depend on its choice. The
deformations are given by forms in $\mathcal{Z}^{1}\left( L\right) $
verifying the Maurer-Cartan equation and the moduli space takes in account
the diffeomorphic deformations. The infinitesimal deformations along curves
are subsets of of the first cohomology group of the DGLA $\left( \mathcal{Z}%
^{\ast }\left( L\right) ,\delta ,\left\{ \cdot ,\cdot \right\} \right) $.

This theory was adapted to the study of the deformations of Levi-flat
hypersurfaces in complex manifolds: we parametrized the Levi-flat
hypersurfaces near a Levi-flat hypersurface in a complex manifold and we
obtained a second order elliptic partial differential equation for an
infinitesimal Levi-flat deformation.

In this paper we consider the graded algebra of graded derivations defined
by Fr\"{o}licher and Nijenhuis in \cite{Frolicher56} with the DGLA structure
defined by K. Kodaira and D. C. Spencer in \cite{Kodaira61}. We construct
canonical solutions of the Maurer-Cartan equation in this algebra by means
of deformations of the $d$-operator depending on a vector valued
differential $1$-form $\Phi $ and we give a classification of these
solutions depending on their type. A canonical solution of the Maurer-Cartan
equation associated to an endomorphism $\Phi $ is of finite type $r$ if
there exists $r\in \mathbb{N}$ such that $\Phi ^{r}\left[ \Phi ,\Phi \right]
_{\mathcal{FN}}=0$ and $r$ is minimal with this property, where $\left[
\cdot ,\cdot \right] _{\mathcal{FN}}$ is the Fr\"{o}licher-Nijenhuis
bracket. We show that a distribution $\xi $ of codimension $k$ on a smooth
manifold is integrable if and only if the canonical solution of the
Maurer-Cartan equation associated to the endomorphism of the tangent space
which is the trivial extension of the $k$-identity on a complement of $\xi $
in $TM$ is of finite type $\leqslant 1$. If $\xi $ is a distribution of
dimension $s$ such that there exists an integrable distribution $\xi ^{\ast
} $ of dimension $d$ generated by $\xi $, we show that there exists locally
an endomorphism $\Phi $ associated to $\xi $ such that the canonical
solution of the Maurer-Cartan equation associated to $\Phi $ is of finite
type less than $r=\min \left\{ m\in \mathbb{N}:\ m\geqslant \frac{d}{s}%
\right\} $.

In the case of integrable distributions of codimension $1$, we study also
the infinitesimal deformations of the canonical solutions of the
Maurer-Cartan equation in the algebra of graded derivations by means of the
theory of deformations developped in \cite{PdbAI15}.

\section{The $DGLA$ of graded derivations}

In this paragraph we recall some definitions and properties of the DGLA of
graded derivations from \cite{Frolicher56},\ \cite{Kodaira61} (see also \cite%
{Michor2008}).

\begin{notation}
Let $M$ be a smooth manifold. We denote by $\Lambda ^{\ast }M$ the algebra
of differential forms on $M$, by $\mathfrak{X}\left( M\right) $ the Lie
algebra of vector fields on $M$ and by $\Lambda ^{\ast }M\otimes TM$ the
algebra of $TM$-valued differential form on $M$, where $TM$ is the tangent
bundle to $M$. In the sequel, we will identify $\Lambda ^{1}M\otimes TM$
with the algebra $End\left( TM\right) $ of endomorphisms of $TM$ by their
canonical isomorphism: for $\sigma \in \Lambda ^{1}M$, $X,Y\in \mathfrak{X}%
\left( M\right) $, $\left( \sigma \otimes X\right) \left( Y\right) =\sigma
\left( Y\right) X$.
\end{notation}

\begin{definition}
A differential graded Lie agebra (DGLA) is a triple $\left( V^{\ast },d,%
\left[ \cdot ,\cdot \right] \right) $ such that:

1) $V^{\ast }=\oplus _{i\in \mathbb{N}}V^{i}$, where $\left( V^{i}\right)
_{i\in \mathbb{N}}$ \ is a family of $\mathbb{C}$-vector spaces and $%
d:V^{\ast }\rightarrow V^{\ast }$is a graded homomorphism such that $d^{2}=0$%
. An element $a\in V^{k}$ is said to be homogeneous of degree $k=\deg a$.

2) $\left[ \cdot ,\cdot \right] :$ $V^{\ast }\times V^{\ast }\rightarrow
V^{\ast }$defines a structure of graded Lie algebra i.e. for homogeneous
elements we have

\begin{equation*}
\left[ a,b\right] =-\left( -1\right) ^{\deg a\deg b}\left[ b,a\right]
\end{equation*}%
and 
\begin{equation*}
\left[ a,\left[ b,c\right] \right] =\left[ \left[ a,b\right] ,c\right]
+\left( -1\right) ^{\deg a\deg b}\left[ b,\left[ a,c\right] \right]
\end{equation*}

3) $d$ is compatible with the graded Lie algebra structure i.e. 
\begin{equation*}
d\left[ a,b\right] =\left[ da,b\right] +\left( -1\right) ^{\deg a}\left[ a,db%
\right] .
\end{equation*}
\end{definition}

\begin{definition}
Let $\left( V^{\ast },d,\left[ \cdot ,\cdot \right] \right) $ be a DGLA and $%
a\in V^{1}$. We say that $a$ verifies the Maurer-Cartan equation in $\left(
V^{\ast },d,\left[ \cdot ,\cdot \right] \right) $ if%
\begin{equation*}
da+\frac{1}{2}\left[ a,a\right] =0.
\end{equation*}
\end{definition}

\begin{definition}
Let $A=\oplus _{k\in \mathbb{Z}}A_{k}$ be a graded algebra. A linear mapping 
$D:A\rightarrow A$ is called a graded derivation of degree $p=\left\vert
D\right\vert $ if $D:A_{k}\rightarrow A_{k+p}$ and $D\left( ab\right)
=D\left( a\right) b+\left( -1\right) ^{p\deg a}aD\left( b\right) $.
\end{definition}

\begin{definition}
Let $M$ be a smooth manifold. We denote by $\mathcal{D}^{\ast }\left(
M\right) $ the graded algebra of graded derivations of $\Lambda ^{\ast }M$.
\end{definition}

\begin{definition}
Let $P,Q$ be homogeneous elements of degree $\left\vert P\right\vert ,\
\left\vert Q\right\vert $ of $\mathcal{D}^{\ast }\left( M\right) $. We define%
\begin{equation*}
\left[ P,Q\right] =PQ-\left( -1\right) ^{\left\vert P\right\vert \left\vert
Q\right\vert }QP,
\end{equation*}%
\begin{equation*}
\daleth P=\left[ d,P\right] .
\end{equation*}
\end{definition}

\begin{lemma}
Let $M$ be a smooth manifold. Then $\left( \mathcal{D}^{\ast }\left(
M\right) ,\left[ \cdot ,\cdot \right] ,\daleth \right) $ is a DGLA.
\end{lemma}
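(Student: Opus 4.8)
The plan is to derive all three DGLA axioms from two inputs: the purely formal properties of the graded commutator on \emph{any} associative graded algebra, together with the two special features of the exterior derivative, namely that $d$ is itself a graded derivation of degree $1$ and that $d^{2}=0$. The endomorphism algebra $\mathrm{End}_{\mathbb{C}}\left( \Lambda ^{\ast }M\right) $, with composition as product and the grading by degree shift, is an associative graded algebra, and $\mathcal{D}^{\ast }\left( M\right) $ sits inside it as the subspace of graded derivations. The bracket $\left[ P,Q\right] =PQ-\left( -1\right) ^{\left\vert P\right\vert \left\vert Q\right\vert }QP$ and $\daleth =\left[ d,\cdot \right] $ are then interpreted inside this ambient algebra.

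First I would verify \emph{closure}: if $P,Q$ are graded derivations of degrees $p,q$, then $\left[ P,Q\right] $ is again a graded derivation, of degree $p+q$. This is the one genuinely computational point. One expands $PQ\left( \alpha \beta \right) $ and $QP\left( \alpha \beta \right) $ by applying the derivation rule twice; in the combination $PQ-\left( -1\right) ^{pq}QP$ the four ``mixed'' terms (those carrying one factor $P$ and one factor $Q$ applied to different arguments) cancel in two pairs precisely because of the sign $\left( -1\right) ^{pq}$, leaving exactly $\left[ P,Q\right] \left( \alpha \right) \beta +\left( -1\right) ^{\left( p+q\right) \deg \alpha }\alpha \left[ P,Q\right] \left( \beta \right) $. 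This secures that $\left[ \cdot ,\cdot \right] $ maps $\mathcal{D}^{\ast }\left( M\right) \times \mathcal{D}^{\ast }\left( M\right) $ into $\mathcal{D}^{\ast }\left( M\right) $. Next, the graded antisymmetry $\left[ P,Q\right] =-\left( -1\right) ^{pq}\left[ Q,P\right] $ is immediate from the definition, and the graded Jacobi identity
\[
\left[ P,\left[ Q,R\right] \right] =\left[ \left[ P,Q\right] ,R\right] +\left( -1\right) ^{pq}\left[ Q,\left[ P,R\right] \right]
\]
is the standard identity for the graded commutator in any associative graded algebra, proved by expanding both sides into the six monomials $PQR,PRQ,\dots$ and matching signs. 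This gives axiom (2).

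It then remains to handle $\daleth =\left[ d,\cdot \right] $, and here the point is that $\daleth $ is the inner graded derivation $\mathrm{ad}_{d}$, so both remaining axioms fall out of the Jacobi identity already established. For axiom (3), setting $P=d$ in Jacobi and using $\left\vert d\right\vert =1$ gives $\daleth \left[ Q,R\right] =\left[ \daleth Q,R\right] +\left( -1\right) ^{\left\vert Q\right\vert }\left[ Q,\daleth R\right] $, which is exactly the compatibility of $\daleth $ with the bracket. For the condition $\daleth ^{2}=0$ in axiom (1), I would first note $\left[ d,d\right] =dd-\left( -1\right) ^{1}dd=2d^{2}=0$; then applying Jacobi to $\left[ d,\left[ d,P\right] \right] $ (equivalently $\mathrm{ad}_{d}^{2}=\tfrac{1}{2}\mathrm{ad}_{\left[ d,d\right] }$) yields $2\daleth ^{2}P=\left[ \left[ d,d\right] ,P\right] =0$, hence $\daleth ^{2}=0$. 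Finally $\daleth $ visibly raises degree by $1$ and is $\mathbb{C}$-linear, which completes axiom (1).

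The only real obstacle is the sign bookkeeping: the closure computation and the Jacobi expansion are where errors creep in, and everything else is formal once $\daleth $ is recognized as $\mathrm{ad}_{d}$ together with $d^{2}=0$.
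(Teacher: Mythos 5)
Your proof is correct; the sign bookkeeping in the closure computation, the graded Jacobi identity, and the derivation of $\daleth^{2}=0$ and of the compatibility of $\daleth$ with the bracket from $\daleth =\mathrm{ad}_{d}$ and $\left[ d,d\right] =2d^{2}=0$ all check out. The paper itself gives no proof of this lemma --- it is recalled from Fr\"olicher--Nijenhuis and Kodaira--Spencer --- and your argument is exactly the standard one those references use, so there is nothing to compare beyond noting that you have supplied the omitted verification.
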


\begin{definition}
Let $\alpha \in \Lambda ^{\ast }M$ and $X\in \mathfrak{X}\left( M\right) $.
We define $\mathcal{L}_{\alpha \otimes X}$,\ $\mathcal{I}_{\alpha \otimes X}$
by 
\begin{equation}
\mathcal{L}_{\alpha \otimes X}\sigma =\alpha \wedge \mathcal{L}_{X}\sigma
+\left( -1\right) ^{\left\vert \alpha \right\vert }d\alpha \wedge \iota
_{X}\sigma ,\ \sigma \in \Lambda ^{\ast }\left( M\right)
\label{L alfa tens X}
\end{equation}%
\begin{equation}
\mathcal{I}_{\alpha \otimes X}\sigma =\alpha \wedge \iota _{X}\sigma ,\
\sigma \in \Lambda ^{\ast }\left( M\right)  \label{I alfa tens X}
\end{equation}%
where $\mathcal{L}_{X}$ is the Lie derivative and $\iota _{X}$ the
contraction by $X$.

For $\Phi \in \Lambda ^{\ast }M\otimes TM$ we define $\mathcal{L}_{\Phi },\ 
\mathcal{I}_{\Phi }$ as the extensions by linearity of (\ref{L alfa tens X}%
), (\ref{I alfa tens X}).
\end{definition}

\begin{remark}
\label{Ifisigma(X,Y)}Let $\omega \in \Lambda ^{2}\left( M\right) $, $Z\in 
\mathfrak{X}\left( M\right) $ and $\sigma \in \Lambda ^{1}\left( M\right) $.
Then for every $X,Y\in \mathfrak{X}\left( M\right) $%
\begin{equation*}
\mathcal{I}_{\omega \otimes Z}\sigma \left( X,Y\right) =\left( \omega \wedge
\iota _{Z}\sigma \right) \left( X,Y\right) =\sigma \left( Z\right) \omega
\left( X,Y\right) =\sigma \left( \left( \omega \otimes Z\right) \left(
X,Y\right) \right) .
\end{equation*}%
By linearity, for every $\Phi \in \Lambda ^{2}M\otimes TM$, $\sigma \in
\Lambda ^{1}\left( M\right) $, $X,Y\in \mathfrak{X}\left( M\right) $ we have%
\begin{equation*}
\mathcal{I}_{\Phi }\sigma \left( X,Y\right) =\sigma \left( \Phi \left(
X,Y\right) \right) .
\end{equation*}
\end{remark}

\begin{lemma}
For every $\Phi \in \Lambda ^{k}M\otimes TM$, $\mathcal{L}_{\Phi }$,\ $%
\mathcal{I}_{\Phi }\in \mathcal{D}^{\ast }\left( M\right) $, $\left\vert 
\mathcal{L}_{\Phi }\right\vert =k$,\ $\left\vert \mathcal{I}_{\Phi
}\right\vert =k-1$.
\end{lemma}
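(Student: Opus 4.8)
The plan is to reduce everything to decomposable elements $\Phi=\alpha\otimes X$ with $\alpha\in\Lambda^{k}M$ and $X\in\mathfrak{X}(M)$, since $\mathcal{L}_{\Phi}$ and $\mathcal{I}_{\Phi}$ are defined by extending (\ref{L alfa tens X}) and (\ref{I alfa tens X}) by linearity. First I would record that each of $\mathcal{L}_{\alpha\otimes X}$ and $\mathcal{I}_{\alpha\otimes X}$ is additive (being built from the linear operators $\mathcal{L}_{X}$, $\iota_{X}$, $d$ and left wedge multiplication by $\alpha$ or $d\alpha$), and check that the assignment $\alpha\otimes X\mapsto \mathcal{L}_{\alpha\otimes X}$, $\mathcal{I}_{\alpha\otimes X}$ is balanced over $C^{\infty}(M)$, i.e. $\mathcal{I}_{(f\alpha)\otimes X}=\mathcal{I}_{\alpha\otimes(fX)}$ and $\mathcal{L}_{(f\alpha)\otimes X}=\mathcal{L}_{\alpha\otimes(fX)}$ for $f\in C^{\infty}(M)$. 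For $\mathcal{I}$ this is immediate from $\iota_{fX}=f\iota_{X}$; for $\mathcal{L}$ the only nontrivial point is combining the Leibniz expansion $d(f\alpha)=df\wedge\alpha+f\,d\alpha$ with the Cartan-type identity $\mathcal{L}_{fX}=f\mathcal{L}_{X}+df\wedge\iota_{X}$, after which the two sides agree. This makes $\Phi\mapsto\mathcal{L}_{\Phi}$ and $\Phi\mapsto\mathcal{I}_{\Phi}$ well defined on $\Lambda^{k}M\otimes TM$.

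The degree statement is then immediate from the formulas. For $\sigma\in\Lambda^{p}M$ one has $\mathcal{I}_{\alpha\otimes X}\sigma=\alpha\wedge\iota_{X}\sigma\in\Lambda^{k+p-1}M$, so $\left\vert\mathcal{I}_{\alpha\otimes X}\right\vert=k-1$; and both summands $\alpha\wedge\mathcal{L}_{X}\sigma$ and $d\alpha\wedge\iota_{X}\sigma$ defining $\mathcal{L}_{\alpha\otimes X}\sigma$ lie in $\Lambda^{k+p}M$ (using $\left\vert d\alpha\right\vert=k+1$ and $\left\vert\iota_{X}\sigma\right\vert=p-1$), so $\left\vert\mathcal{L}_{\alpha\otimes X}\right\vert=k$.

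The substantive part is the graded Leibniz rule. For $\mathcal{I}_{\alpha\otimes X}$ I would start from the antiderivation property of the contraction, $\iota_{X}(\sigma\wedge\tau)=\iota_{X}\sigma\wedge\tau+(-1)^{\deg\sigma}\sigma\wedge\iota_{X}\tau$, wedge on the left by $\alpha$, and move $\alpha$ past $\sigma$ in the second term via $\sigma\wedge\alpha=(-1)^{k\deg\sigma}\alpha\wedge\sigma$; collecting signs (the relevant parity being $(-1)^{2k\deg\sigma}=1$) yields exactly $\mathcal{I}_{\alpha\otimes X}\sigma\wedge\tau+(-1)^{(k-1)\deg\sigma}\sigma\wedge\mathcal{I}_{\alpha\otimes X}\tau$, as required for a derivation of degree $k-1$. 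For $\mathcal{L}_{\alpha\otimes X}$ I would expand $\mathcal{L}_{\alpha\otimes X}(\sigma\wedge\tau)$ by applying the derivation property of $\mathcal{L}_{X}$ (degree $0$) and the antiderivation property of $\iota_{X}$ (degree $-1$) inside the two terms of (\ref{L alfa tens X}), and compare with $\mathcal{L}_{\alpha\otimes X}\sigma\wedge\tau+(-1)^{k\deg\sigma}\sigma\wedge\mathcal{L}_{\alpha\otimes X}\tau$; the mixed terms match after commuting $\sigma$ past $\alpha$ (degree $k$) and past $d\alpha$ (degree $k+1$), using $(-1)^{2k\deg\sigma}=1$ and $(-1)^{(2k+1)\deg\sigma}=(-1)^{\deg\sigma}$. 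Extending by linearity in $\Phi$ preserves both the degree and the Leibniz identity, giving $\mathcal{L}_{\Phi},\mathcal{I}_{\Phi}\in\mathcal{D}^{\ast}(M)$ with $\left\vert\mathcal{L}_{\Phi}\right\vert=k$ and $\left\vert\mathcal{I}_{\Phi}\right\vert=k-1$.

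The main obstacle is purely the sign bookkeeping in the $\mathcal{L}$-case, where one must carefully track the factors produced by moving $\sigma$ across forms of degree $k$ and $k+1$; the geometric content is carried entirely by the standard (anti)derivation properties of $d$, $\mathcal{L}_{X}$ and $\iota_{X}$, which I would take as known.
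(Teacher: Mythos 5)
Your proof is correct. Note that the paper itself states this lemma without proof --- it is recalled from \cite{Frolicher56} and \cite{Michor2008} --- so there is no argument of the authors to compare against; your direct verification is the standard one. The two points that actually require checking are exactly the ones you address: well-definedness of $\mathcal{L}_{\Phi}$ and $\mathcal{I}_{\Phi}$ on the $C^{\infty}(M)$-tensor product $\Lambda^{k}M\otimes TM$ (via $\iota_{fX}=f\iota_{X}$ and $\mathcal{L}_{fX}=f\mathcal{L}_{X}+df\wedge\iota_{X}$ combined with $d(f\alpha)=df\wedge\alpha+f\,d\alpha$), and the graded Leibniz rule with the correct degrees $k$ and $k-1$, which reduces to the (anti)derivation properties of $\mathcal{L}_{X}$ and $\iota_{X}$ plus sign bookkeeping; all the signs you quote reduce to even powers of $-1$ and come out right.
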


\begin{notation}
\begin{equation*}
\mathcal{L}\left( M\right) =\left\{ \mathcal{L}_{\Phi }:\ \Phi \in \Lambda
^{\ast }M\otimes TM\right\} ,\ \mathcal{I}\left( M\right) =\left\{ \mathcal{I%
}_{\Phi }:\ \Phi \in \Lambda ^{\ast }M\otimes TM\right\} .
\end{equation*}%
In \cite{Frolicher56} the graded derivations of $\mathcal{L}\left( M\right) $
(respectively of $\mathcal{I}\left( M\right) $) are called of type $d_{\ast
} $ (respectively of type $\iota _{\ast }$).
\end{notation}

\begin{lemma}
\begin{enumerate}
\item \label{Lemme technique} For every $D\in \mathcal{D}^{k}\left( M\right) 
$ there exist unique forms $\Phi \in \Lambda ^{k}M\otimes TM,\Psi \in
\Lambda ^{k+1}M\otimes TM$ such that 
\begin{equation*}
D=\mathcal{L}_{\Phi }+\mathcal{I}_{\Psi },
\end{equation*}%
so%
\begin{equation*}
\mathcal{D}^{\ast }\left( M\right) =\mathcal{L}\left( M\right) \oplus 
\mathcal{I}\left( M\right) .
\end{equation*}%
We denote $\mathcal{L}_{\Phi }=\mathfrak{L}\left( D\right) $ and $\mathcal{I}%
_{\Psi }=\mathfrak{I}\left( D\right) $

\item \label{daled ferme}For every $\Phi \in \Lambda ^{\ast }M\otimes TM$%
\begin{equation}
\daleth \left( -1\right) ^{\left\vert \Phi \right\vert }\mathcal{I}_{\Phi }=%
\left[ \mathcal{I}_{\Phi },d\right] =\mathcal{L}_{\Phi }.
\label{[Ifi,d]=Lfi}
\end{equation}

\item 
\begin{equation*}
\mathcal{L}\left( M\right) =\ker \daleth .
\end{equation*}
\end{enumerate}
\end{lemma}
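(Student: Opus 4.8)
The plan is to prove the three assertions in the stated order, because the decomposition of Part~(1) supplies the uniqueness principle that powers the other two. Throughout I will use the standard fact that a graded derivation of $\Lambda^{\ast}M$ is a local operator; since $\Lambda^{\ast}M$ is generated locally by $\Lambda^{0}M$ together with $d\Lambda^{0}M$, this means that any $D\in\mathcal{D}^{\ast}(M)$ is completely determined by its restrictions to functions and to exact $1$-forms, and that two derivations of the same degree agreeing on $\Lambda^{0}M$ and on $d\Lambda^{0}M$ coincide.

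For Part~(1), I would first restrict $D\in\mathcal{D}^{k}(M)$ to $\Lambda^{0}M$. Since $\deg f=0$, the Leibniz rule reads $D(fg)=D(f)g+fD(g)$, so $f\mapsto D(f)$ is an ordinary derivation of $\Lambda^{0}M$ into the module $\Lambda^{k}M$; by the derivation--module correspondence it factors through $f\mapsto df$ and is given by a unique $\Phi\in\Lambda^{k}M\otimes TM$ via $D(f)=\mathcal{L}_{\Phi}(f)$, where one reads off from \eqref{L alfa tens X} that $\mathcal{L}_{\alpha\otimes X}(f)=\alpha\,X(f)=\mathcal{I}_{\alpha\otimes X}(df)$ while every $\mathcal{I}_{\Psi}$ annihilates functions. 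I would then set $D'=D-\mathcal{L}_{\Phi}$; it kills functions, so Leibniz forces $D'(f\sigma)=fD'(\sigma)$, i.e. $D'$ is $\Lambda^{0}M$-linear. A tensorial derivation is determined by its restriction to $\Lambda^{1}M$, which is a bundle map $\Lambda^{1}M\to\Lambda^{k+1}M$ and hence corresponds to a unique $\Psi\in\Lambda^{k+1}M\otimes TM$; comparing with $\mathcal{I}_{\beta\otimes X}(\sigma)=\beta\,\sigma(X)$ gives $D'=\mathcal{I}_{\Psi}$. Uniqueness follows from the same two tests: $\mathcal{L}_{\Phi}+\mathcal{I}_{\Psi}=0$ evaluated on functions yields $\Phi=0$ (exact $1$-forms span $T^{\ast}M$ pointwise), and then evaluation on $1$-forms yields $\Psi=0$.

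For Part~(2), the first equality $\daleth(-1)^{|\Phi|}\mathcal{I}_{\Phi}=[\mathcal{I}_{\Phi},d]$ is pure sign bookkeeping: since $|\mathcal{I}_{\Phi}|=|\Phi|-1$, graded antisymmetry gives $\daleth\mathcal{I}_{\Phi}=[d,\mathcal{I}_{\Phi}]=(-1)^{|\Phi|}[\mathcal{I}_{\Phi},d]$. The substance is $[\mathcal{I}_{\Phi},d]=\mathcal{L}_{\Phi}$. Both sides are graded derivations of degree $k$, so by the locality principle it suffices to check agreement on $\Lambda^{0}M$ and on exact $1$-forms, and by linearity I may take $\Phi=\alpha\otimes X$. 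On a function $f$ both sides equal $\mathcal{I}_{\Phi}(df)=\alpha\,X(f)$ (using $\mathcal{I}_{\Phi}f=0$). On $df$ I would use Cartan's formula $\mathcal{L}_{X}df=d(X(f))$ and $\iota_{X}df=X(f)$ to expand $\mathcal{L}_{\alpha\otimes X}(df)$, and expand $[\mathcal{I}_{\Phi},d](df)=-(-1)^{|\Phi|-1}d(\mathcal{I}_{\Phi}df)$ via the Leibniz rule for $d$; a short computation shows both produce $(-1)^{k}(d(X(f))\wedge\alpha+X(f)\,d\alpha)$.

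For Part~(3), I would first record that $\daleth^{2}=0$, which is exactly the $d^{2}=0$ condition of the DGLA $(\mathcal{D}^{\ast}(M),[\cdot,\cdot],\daleth)$ from the preceding Lemma (equivalently, the graded Jacobi identity gives $2[d,[d,D]]=[[d,d],D]$ and $[d,d]=2d^{2}=0$). The inclusion $\mathcal{L}(M)\subseteq\ker\daleth$ then follows from Part~(2): $\mathcal{L}_{\Phi}=(-1)^{|\Phi|}\daleth\mathcal{I}_{\Phi}$, whence $\daleth\mathcal{L}_{\Phi}=(-1)^{|\Phi|}\daleth^{2}\mathcal{I}_{\Phi}=0$. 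For the reverse inclusion, take $D\in\ker\daleth$ and write $D=\mathcal{L}_{\Phi}+\mathcal{I}_{\Psi}$ by Part~(1); applying $\daleth$ and using $\daleth\mathcal{L}_{\Phi}=0$ together with $\daleth\mathcal{I}_{\Psi}=(-1)^{|\Psi|}\mathcal{L}_{\Psi}$ from Part~(2) gives $(-1)^{|\Psi|}\mathcal{L}_{\Psi}=0$, so $\mathcal{L}_{\Psi}=0$, and the injectivity of $\Psi\mapsto\mathcal{L}_{\Psi}$ guaranteed by the uniqueness in Part~(1) forces $\Psi=0$; hence $D=\mathcal{L}_{\Phi}\in\mathcal{L}(M)$. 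The main obstacle I anticipate is Part~(1): making rigorous the derivation--module correspondence and the reduction to generators (locality of derivations, and the passage from ``$\Lambda^{0}M$-linear on $1$-forms'' to ``equal to some $\mathcal{I}_{\Psi}$''), since the sign bookkeeping in Parts~(2)--(3), though delicate, is mechanical once Part~(1) is in hand.
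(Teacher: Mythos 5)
Your proof is correct. The paper states this lemma without proof, recalling it from Fr\"olicher--Nijenhuis and Kodaira--Spencer; your argument --- extracting $\Phi$ from the action of $D$ on $\Lambda ^{0}M$ via the derivation--module correspondence, extracting $\Psi $ from the resulting tensorial remainder, checking $\left[ \mathcal{I}_{\Phi },d\right] =\mathcal{L}_{\Phi }$ on the generators $\Lambda ^{0}M$ and $d\Lambda ^{0}M$, and deducing $\mathcal{L}\left( M\right) =\ker \daleth $ from $\daleth ^{2}=0$ together with the injectivity of $\Psi \mapsto \mathcal{L}_{\Psi }$ --- is precisely the standard one in those references, and your sign bookkeeping checks out.
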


\begin{notation}
We denote by $\aleph :\mathcal{D}^{\ast }\left( M\right) \rightarrow 
\mathcal{D}^{\ast }\left( M\right) $ the mapping defined by 
\begin{equation*}
\aleph \left( D\right) =\left( -1\right) ^{\left\vert D\right\vert }\mathcal{%
I}\left( D\right)
\end{equation*}
\end{notation}

\begin{remark}
\begin{equation*}
Id=\daleth \aleph +\aleph \daleth .
\end{equation*}%
Indeed for $D=\mathcal{L}_{\Phi }+\mathcal{I}_{\Psi }\in \mathcal{D}^{\ast
}\left( M\right) ,$ by using Lemma \ref{daled ferme} we have%
\begin{equation*}
\left( \daleth \aleph +\aleph \daleth \right) \left( D\right) =\left(
\daleth \aleph +\aleph \daleth \right) \left( \mathcal{L}_{\Phi }+\mathcal{I}%
_{\Psi }\right) =\daleth \left( -1\right) ^{\left\vert \Phi \right\vert }%
\mathcal{I}_{\Phi }+\aleph \left( -1\right) ^{\left\vert \Phi \right\vert }%
\mathcal{L}_{\Psi }=\mathcal{L}_{\Phi }+\mathcal{I}_{\Psi }=D.
\end{equation*}
\end{remark}

\begin{lemma}
\begin{enumerate}
\item \label{Tensorial}Let $D\in \mathcal{D}^{\ast }\left( M\right) $. The
following are equivalent:

i) $D\in \mathcal{I}\left( M\right) $;

ii) $D\left\vert \Lambda ^{0}\left( M\right) \right. =0$;

iii) $D\left( f\omega \right) =fD\left( \omega \right) $ for every $f\in
C^{\infty }\left( M\right) $ and $\omega \in \Lambda ^{\ast }\left( M\right) 
$.

\item The mapping $\mathcal{L}:\Lambda ^{\ast }M\otimes TM\rightarrow 
\mathcal{D}^{\ast }\left( M\right) $ defined by $\mathcal{L}\left( \Phi
\right) =\mathcal{L}_{\Phi }$ is an injective morphism of graded Lie
algebras.
\end{enumerate}
\end{lemma}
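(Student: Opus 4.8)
The plan is to prove the three conditions in (1) are equivalent by the cycle $(\mathrm{i})\Rightarrow(\mathrm{ii})\Leftrightarrow(\mathrm{iii})\Rightarrow(\mathrm{i})$, and then to deduce (2) from the decomposition $\mathcal{D}^\ast(M)=\mathcal{L}(M)\oplus\mathcal{I}(M)$ together with the identity $\mathcal{L}(M)=\ker\daleth$. The two implications $(\mathrm{i})\Rightarrow(\mathrm{ii})$ and $(\mathrm{ii})\Leftrightarrow(\mathrm{iii})$ are formal. For $(\mathrm{i})\Rightarrow(\mathrm{ii})$, writing $\Psi=\sum_i\beta_i\otimes Y_i$ one has $\mathcal{I}_{\beta_i\otimes Y_i}f=\beta_i\wedge\iota_{Y_i}f=0$ for $f\in\Lambda^0(M)$, since $\iota_{Y_i}f=0$ on a $0$-form; hence $\mathcal{I}_\Psi$ kills $\Lambda^0(M)$. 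For $(\mathrm{ii})\Leftrightarrow(\mathrm{iii})$ I would only use the graded Leibniz rule: applying it to $1=1\cdot 1$ gives $D(1)=2D(1)$, so $D(1)=0$, and since $\deg f=0$ the rule reads $D(f\omega)=D(f)\,\omega+fD(\omega)$. Thus (iii) is equivalent to $D(f)\,\omega=0$ for all $f,\omega$, i.e. to $D(f)=0$ for all $f$, which (given $D(1)=0$) is precisely (ii).

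The substantive step is $(\mathrm{ii})\Rightarrow(\mathrm{i})$. Using the unique decomposition $D=\mathcal{L}_\Phi+\mathcal{I}_\Psi$, and the fact just observed that $\mathcal{I}_\Psi$ annihilates $\Lambda^0(M)$, hypothesis (ii) gives $\mathcal{L}_\Phi|_{\Lambda^0(M)}=0$, so it remains to show this forces $\Phi=0$. I would use that on functions $\mathcal{L}_\Phi$ reduces to the tensorial operator $\mathcal{I}_\Phi$: for $\Phi=\alpha\otimes X$, formula (\ref{L alfa tens X}) gives $\mathcal{L}_{\alpha\otimes X}f=\alpha\,X(f)=\alpha\wedge\iota_X(df)=\mathcal{I}_{\alpha\otimes X}(df)$, and by linearity $\mathcal{L}_\Phi f=\mathcal{I}_\Phi(df)$ for every $\Phi$. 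Hence $\mathcal{I}_\Phi$ vanishes on all exact $1$-forms. Since $\mathcal{I}_\Phi$ acts tensorially on $1$-forms, $(\mathcal{I}_\Phi\sigma)_p$ depends only on $\sigma_p$, and because every covector at $p$ is realized as $df_p$ for a suitable $f$, the operator $\mathcal{I}_\Phi$ in fact vanishes on all of $\Lambda^1(M)$. By the degree-$k$ analogue of the computation in the remark above, namely $\mathcal{I}_\Phi\sigma(X_1,\dots,X_k)=\sigma(\Phi(X_1,\dots,X_k))$, this yields $\sigma(\Phi(X_1,\dots,X_k))=0$ for all $\sigma$ and all $X_j$, so $\Phi=0$ and $D=\mathcal{I}_\Psi\in\mathcal{I}(M)$. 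I expect this pointwise nondegeneracy — upgrading ``$\mathcal{L}_\Phi$ kills functions'' to ``$\Phi=0$'' via exact forms spanning the cotangent space — to be the only real obstacle; everything else is bookkeeping.

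For (2), injectivity is immediate from the uniqueness in $\mathcal{D}^\ast(M)=\mathcal{L}(M)\oplus\mathcal{I}(M)$: if $\mathcal{L}_\Phi=0$, then $0=\mathcal{L}_\Phi+\mathcal{I}_0$, forcing $\Phi=0$. For the morphism property the plan is to show that $\mathcal{L}(M)$ is a graded Lie subalgebra and that $\mathcal{L}$ transports the bracket. Since $\mathcal{L}(M)=\ker\daleth$ and $\daleth$ is the differential of the DGLA $\mathcal{D}^\ast(M)$, it is a graded derivation of $[\cdot,\cdot]$, so for $\Phi\in\Lambda^kM\otimes TM$ and any $\Psi$
\[
\daleth[\mathcal{L}_\Phi,\mathcal{L}_\Psi]=[\daleth\mathcal{L}_\Phi,\mathcal{L}_\Psi]+(-1)^{k}[\mathcal{L}_\Phi,\daleth\mathcal{L}_\Psi]=0,
\]
whence $[\mathcal{L}_\Phi,\mathcal{L}_\Psi]\in\ker\daleth=\mathcal{L}(M)$. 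Thus there is a unique element $[\Phi,\Psi]_{\mathcal{FN}}\in\Lambda^\ast M\otimes TM$ with $\mathcal{L}_{[\Phi,\Psi]_{\mathcal{FN}}}=[\mathcal{L}_\Phi,\mathcal{L}_\Psi]$; this is the graded Lie bracket carried by $\Lambda^\ast M\otimes TM$, the degrees matching because $|\mathcal{L}_\Phi|=k$ for $\Phi\in\Lambda^kM\otimes TM$. By construction $\mathcal{L}$ intertwines the two brackets, and being injective it is a monomorphism (indeed an isomorphism onto $\mathcal{L}(M)$) of graded Lie algebras.
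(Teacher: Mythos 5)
Your proof is correct. The paper itself states this lemma without proof (it is recalled from Fr\"olicher--Nijenhuis and Kodaira--Spencer), so there is nothing to diverge from; your argument for part (2) in fact coincides with the displayed computation the paper performs immediately after the lemma, where $\daleth\left[ \mathcal{L}_{\Phi },\mathcal{L}_{\Psi }\right] =0$ is used to define the Fr\"olicher--Nijenhuis bracket by transport along the injective map $\mathcal{L}$, and your treatment of the substantive step $(\mathrm{ii})\Rightarrow(\mathrm{i})$ --- reducing $\mathcal{L}_{\Phi }$ on functions to $\mathcal{I}_{\Phi }\circ d$, then using tensoriality of $\mathcal{I}_{\Phi }$ and the fact that exact forms span each cotangent space --- is sound.
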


\begin{remark}
\label{d=lId}$d\in \mathcal{D}^{1}\left( M\right) $ and%
\begin{equation*}
d=\mathcal{L}_{Id_{T\left( M\right) }}=-\daleth \mathcal{I}_{Id_{T\left(
M\right) }}.
\end{equation*}
\end{remark}

By Lemma \ref{Lemme technique} and the Jacobi identity, for every $\Phi \in
\Lambda ^{k}M\otimes TM,\Psi \in \Lambda ^{l}M\otimes TM$ we have 
\begin{eqnarray*}
\daleth \left( \left[ \mathcal{L}_{\Phi },\mathcal{L}_{\Psi }\right] \right)
&=&\left[ d,\left[ \mathcal{L}_{\Phi },\mathcal{L}_{\Psi }\right] \right] =%
\left[ \left[ d,\mathcal{L}_{\Phi }\right] ,\mathcal{L}_{\Psi }\right]
+\left( -1\right) ^{\left\vert \Phi \right\vert }\left[ \mathcal{L}_{\Phi },%
\left[ d,\mathcal{L}_{\Psi }\right] \right] \\
&=&\left[ \daleth \mathcal{L}_{\Phi },\mathcal{L}_{\Psi }\right] +\left(
-1\right) ^{\left\vert \Phi \right\vert }\left[ \mathcal{L}_{\Phi },\daleth 
\mathcal{L}_{\Phi }\right] =0,
\end{eqnarray*}%
so there exists a unique form $\left[ \Phi ,\Psi \right] \in \Lambda
^{k+l}M\otimes TM$ such that 
\begin{equation}
\left[ \mathcal{L}_{\Phi },\mathcal{L}_{\Psi }\right] =\mathcal{L}_{\left[
\Phi ,\Psi \right] }.  \label{Fr N bracket}
\end{equation}%
This gives the following

\begin{definition}
Let $\Phi ,\Psi \in \Lambda ^{\ast }M\otimes TM$. The Fr\"{o}%
licher-Nijenhuis bracket of $\Phi $ and $\Psi $ is the unique form $\left[
\Phi ,\Psi \right] _{\mathcal{FN}}\in \Lambda ^{\ast }M\otimes TM$ verifying
(\ref{Fr N bracket}).
\end{definition}

\begin{lemma}
\label{[Lfi,ICsi]}Let $\Phi _{1}\in \Lambda ^{k_{1}}M\otimes TM,\Phi _{2}\in
\Lambda ^{k_{2}}M\otimes TM,\Psi _{1}\in \Lambda ^{k_{1}+1}M\otimes TM,\Psi
_{2}\in \Lambda ^{k_{2}+1}M\otimes TM$. Then%
\begin{eqnarray*}
\left[ \mathcal{L}_{\Phi _{1}}+\mathcal{I}_{\Psi _{1}},\mathcal{L}_{\Phi
_{2}}+\mathcal{I}_{\Psi _{2}}\right] &=&\mathcal{L}_{\left[ \Phi _{1},\Phi
_{2}\right] _{\mathcal{FN}}+\mathcal{I}_{\Psi _{1}}\Phi _{2}-\left(
-1\right) ^{k_{1}k_{2}}\mathcal{I}_{\Psi _{2}}\Phi _{1}} \\
&&+\mathcal{I}_{\mathcal{I}_{\Psi _{1}}\Psi _{2}-\left( -1\right)
^{k_{1}k_{2}}\mathcal{I}_{\Psi _{2}}\Psi _{1}+\left[ \Phi _{1},\Psi _{2}%
\right] _{\mathcal{FN}}-\left( -1\right) ^{k_{1}k_{2}}\left[ \Phi _{2},\Psi
_{1}\right] _{\mathcal{FN}}}.
\end{eqnarray*}%
In particular 
\begin{equation}
\left[ \mathcal{I}_{\Phi },\mathcal{I}_{\Psi }\right] =\mathcal{I}_{\mathcal{%
I}_{\Phi }\Psi }-\left( -1\right) ^{\left( \left\vert \Phi \right\vert
+1\right) \left( \left\vert \Psi \right\vert +1\right) }\mathcal{I}_{%
\mathcal{I}_{\Psi }\Phi };  \label{[I,I]}
\end{equation}%
\begin{equation}
\left[ \mathcal{L}_{\Phi },\mathcal{I}_{\Psi }\right] =\mathcal{I}_{\left[
\Phi ,\Psi \right] _{\mathcal{FN}}}-\left( -1\right) ^{\left\vert \Phi
\right\vert \left( \left\vert \Psi \right\vert +1\right) }\mathcal{L}_{%
\mathcal{I}_{\Psi }\Phi };  \label{[L,I]}
\end{equation}%
\begin{equation*}
\left[ \mathcal{I}_{\Psi },\mathcal{L}_{\Phi }\right] =\mathcal{L}_{\mathcal{%
I}_{\Psi }\Phi }-\left( -1\right) ^{\left\vert \Phi \right\vert }\mathcal{I}%
_{\left[ \Psi ,\Phi \right] _{\mathcal{FN}}}.
\end{equation*}
\end{lemma}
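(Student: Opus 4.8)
The plan is to exploit bilinearity of the graded commutator together with the canonical splitting $\mathcal{D}^{\ast}(M)=\mathcal{L}(M)\oplus\mathcal{I}(M)$ of Lemma~\ref{Lemme technique}. Expanding the left-hand side produces the four brackets $[\mathcal{L}_{\Phi_1},\mathcal{L}_{\Phi_2}]$, $[\mathcal{L}_{\Phi_1},\mathcal{I}_{\Psi_2}]$, $[\mathcal{I}_{\Psi_1},\mathcal{L}_{\Phi_2}]$ and $[\mathcal{I}_{\Psi_1},\mathcal{I}_{\Psi_2}]$. The first is $\mathcal{L}_{[\Phi_1,\Phi_2]_{\mathcal{FN}}}$ by the very definition (\ref{Fr N bracket}) of the Fr\"{o}licher--Nijenhuis bracket, and the third is obtained from the second via the graded antisymmetry $[P,Q]=-(-1)^{\left\vert P\right\vert \left\vert Q\right\vert}[Q,P]$ together with the antisymmetry of $[\cdot,\cdot]_{\mathcal{FN}}$. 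So the substance is to establish the two ``in particular'' identities (\ref{[I,I]}) and (\ref{[L,I]}); the full formula then follows by collecting the $\mathcal{L}$- and $\mathcal{I}$-components, the only care being the reduction of the sign exponents modulo $2$ (e.g. $(-1)^{(\left\vert \Psi_1\right\vert+1)(\left\vert \Psi_2\right\vert+1)}=(-1)^{k_1k_2}$ and $(-1)^{\left\vert \Phi_1\right\vert(\left\vert \Psi_2\right\vert+1)}=(-1)^{k_1k_2}$).

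For (\ref{[I,I]}): since $\mathcal{I}_{\Phi}$ and $\mathcal{I}_{\Psi}$ both kill $\Lambda^{0}(M)$, so does their commutator, whence $[\mathcal{I}_{\Phi},\mathcal{I}_{\Psi}]\in\mathcal{I}(M)$ by the equivalence in Lemma~\ref{Tensorial}; thus $[\mathcal{I}_{\Phi},\mathcal{I}_{\Psi}]=\mathcal{I}_{\Theta}$ for a unique $\Theta$. To identify $\Theta$ I would test on decomposable forms $\Phi=\alpha\otimes X$, $\Psi=\beta\otimes Y$ and use the Leibniz rule for $\iota_{X}$: expanding $\mathcal{I}_{\alpha\otimes X}(\beta\wedge\iota_{Y}\sigma)$ produces one term $\alpha\wedge(\iota_{X}\beta)\wedge\iota_{Y}\sigma=\mathcal{I}_{\mathcal{I}_{\Phi}\Psi}\sigma$ and one ``double contraction'' term proportional to $\alpha\wedge\beta\wedge\iota_{X}\iota_{Y}\sigma$, where $\mathcal{I}_{\Phi}\Psi$ denotes $\mathcal{I}_{\Phi}$ acting on the differential-form part of $\Psi$. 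In the commutator the double-contraction terms cancel against those of the opposite order, and what remains is exactly $\mathcal{I}_{\mathcal{I}_{\Phi}\Psi}-(-1)^{(\left\vert \Phi\right\vert+1)(\left\vert \Psi\right\vert+1)}\mathcal{I}_{\mathcal{I}_{\Psi}\Phi}$.

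For (\ref{[L,I]}) I would split $C:=[\mathcal{L}_{\Phi},\mathcal{I}_{\Psi}]=\mathfrak{L}(C)+\mathfrak{I}(C)$ and compute the two pieces separately. The $\mathcal{L}$-part is pinned down by its action on functions, since $\mathfrak{I}(C)$ vanishes on $\Lambda^{0}(M)$ and $\mathcal{L}$ is injective (Lemma~\ref{Tensorial}); using $\mathcal{I}_{\Psi}f=0$, the identity $\mathcal{L}_{\Phi}f=\mathcal{I}_{\Phi}\,df$ read off from (\ref{L alfa tens X}), and $\mathcal{I}_{\Psi}\mathcal{I}_{\Phi}\,df=\mathcal{I}_{\mathcal{I}_{\Psi}\Phi}\,df$, one gets $Cf=-(-1)^{\left\vert \Phi\right\vert(\left\vert \Psi\right\vert+1)}\mathcal{L}_{\mathcal{I}_{\Psi}\Phi}f$, hence $\mathfrak{L}(C)=-(-1)^{\left\vert \Phi\right\vert(\left\vert \Psi\right\vert+1)}\mathcal{L}_{\mathcal{I}_{\Psi}\Phi}$. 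The $\mathcal{I}$-part is identified by applying $\daleth$: since $\daleth$ is a graded derivation of the bracket and $\daleth\mathcal{L}_{\Phi}=0$, $\daleth\mathcal{I}_{\Psi}=(-1)^{\left\vert \Psi\right\vert}\mathcal{L}_{\Psi}$ by (\ref{[Ifi,d]=Lfi}), I find $\daleth C=(-1)^{\left\vert \Phi\right\vert+\left\vert \Psi\right\vert}[\mathcal{L}_{\Phi},\mathcal{L}_{\Psi}]=(-1)^{\left\vert \Phi\right\vert+\left\vert \Psi\right\vert}\mathcal{L}_{[\Phi,\Psi]_{\mathcal{FN}}}$. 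Writing $\mathfrak{I}(C)=\mathcal{I}_{\Theta}$ with $\Theta\in\Lambda^{\left\vert \Phi\right\vert+\left\vert \Psi\right\vert}M\otimes TM$ and using $\daleth\mathcal{I}_{\Theta}=(-1)^{\left\vert \Theta\right\vert}\mathcal{L}_{\Theta}$ together with the injectivity of $\mathcal{L}$ forces $\Theta=[\Phi,\Psi]_{\mathcal{FN}}$, i.e. $\mathfrak{I}(C)=\mathcal{I}_{[\Phi,\Psi]_{\mathcal{FN}}}$.

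I expect the main obstacle to be the bookkeeping of grading signs rather than any conceptual difficulty: one must keep track that $\mathcal{I}_{\Phi}$ has degree $\left\vert \Phi\right\vert-1$ while $\mathcal{L}_{\Phi}$ has degree $\left\vert \Phi\right\vert$, and repeatedly reduce exponents such as $\left\vert \Phi\right\vert(\left\vert \Psi\right\vert-1)$ and $(\left\vert \Psi\right\vert-1)\left\vert \Phi\right\vert+\left\vert \Phi\right\vert\left\vert \Psi\right\vert$ modulo $2$ to match the statement. The cancellation of the double-contraction terms in (\ref{[I,I]}) and the sign produced in the antisymmetry step yielding the third identity are the two places where an error is easiest to make.
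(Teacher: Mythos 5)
Your proof is correct, and there is nothing in the paper to compare it against: the lemma is stated without proof, being recalled from the Fr\"{o}licher--Nijenhuis and Kodaira--Spencer literature cited at the start of the section, so your argument supplies a derivation the paper omits. The strategy is sound at every step: the two summands $\mathcal{L}_{\Phi _{i}}$ and $\mathcal{I}_{\Psi _{i}}$ both have degree $k_{i}$, so the bilinear expansion into four brackets carries consistent signs; the double-contraction terms $\alpha \wedge \beta \wedge \iota _{X}\iota _{Y}\sigma $ in (\ref{[I,I]}) do cancel against their opposite-order counterparts once the factor $\left( -1\right) ^{\left( \left\vert \Phi \right\vert +1\right) \left( \left\vert \Psi \right\vert +1\right) }$ is taken into account; and the two-step identification of $C=\left[ \mathcal{L}_{\Phi },\mathcal{I}_{\Psi }\right] $ -- the $\mathfrak{L}$-part from the action on $\Lambda ^{0}\left( M\right) $, the $\mathfrak{I}$-part from $\daleth C=\left( -1\right) ^{\left\vert \Phi \right\vert +\left\vert \Psi \right\vert }\mathcal{L}_{\left[ \Phi ,\Psi \right] _{\mathcal{FN}}}$ together with $\ker \daleth =\mathcal{L}\left( M\right) $ -- is clean, and your reductions of the exponents modulo $2$ check out. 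Two remarks. First, when you pin down $\mathfrak{L}\left( C\right) $ by its values on functions, injectivity of $\mathcal{L}$ is not literally the statement you need; you need that $\mathcal{L}_{\Theta }\left\vert \Lambda ^{0}\left( M\right) \right. =0$ forces $\Theta =0$, which follows from $\mathcal{L}_{\Theta }f=\mathcal{I}_{\Theta }\left( df\right) $ and $\left( \mathcal{I}_{\Theta }df\right) \left( V_{1},\dots ,V_{k}\right) =df\left( \Theta \left( V_{1},\dots ,V_{k}\right) \right) $ -- worth one explicit line. Second, carrying out your antisymmetry step you will obtain $\left[ \mathcal{I}_{\Psi },\mathcal{L}_{\Phi }\right] =\mathcal{L}_{\mathcal{I}_{\Psi }\Phi }-\left( -1\right) ^{\left\vert \Phi \right\vert \left( \left\vert \Psi \right\vert +1\right) }\mathcal{I}_{\left[ \Phi ,\Psi \right] _{\mathcal{FN}}}=\mathcal{L}_{\mathcal{I}_{\Psi }\Phi }+\left( -1\right) ^{\left\vert \Phi \right\vert }\mathcal{I}_{\left[ \Psi ,\Phi \right] _{\mathcal{FN}}}$, which is exactly what the general displayed formula gives upon setting $\Phi _{1}=\Psi _{2}=0$, but which differs by a sign from the third \emph{in particular} line as printed; your method is the correct one and that printed sign appears to be a typo, so do not force your computation to reproduce it.
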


\begin{definition}
Let $\Phi \in \Lambda ^{1}M\otimes TM$. The Nijenhuis tensor of $\Phi $ is $%
N_{\Phi }\in \Lambda ^{2}M\otimes TM$ defined by%
\begin{equation*}
N_{\Phi }\left( X,Y\right) =\left[ \Phi X,\Phi Y\right] +\Phi ^{2}\left[ X,Y%
\right] -\Phi \left[ \Phi X,Y\right] -\Phi \left[ X,\Phi Y\right] ,\ X,Y\in 
\mathfrak{X}\left( M\right) .
\end{equation*}
\end{definition}

\begin{proposition}
\label{Calcul [,]FN}Let $\alpha \in \Lambda ^{k}\left( M\right) $, $\beta
\in \Lambda ^{l}\left( M\right) $, $X,Y\in \mathfrak{X}\left( M\right) $%
.Then:

\begin{enumerate}
\item 
\begin{eqnarray*}
\left[ \alpha \otimes X,\beta \otimes Y\right] _{\mathcal{FN}} &=&\alpha
\wedge \beta \otimes \left[ X,Y\right] +\alpha \wedge \mathcal{L}_{X}\beta
\otimes Y-\mathcal{L}_{Y}\alpha \wedge \beta \otimes X \\
&&+\left( -1\right) ^{k}\left( d\alpha \wedge \iota _{X}\beta \otimes
Y+\iota _{Y}\alpha \wedge d\beta \otimes X\right) .
\end{eqnarray*}

\item Let $\Phi ,\Psi \in \Lambda ^{1}M\otimes TM$. Then%
\begin{eqnarray*}
\left[ \Phi ,\Psi \right] _{\mathcal{FN}}\left( X,Y\right) &=&\left[ \Phi
X,\Psi Y\right] -\left[ \Phi Y,\Psi X\right] -\Psi \left[ \Phi X,Y\right]
+\Psi \left[ \Phi Y,X\right] \\
&&-\Phi \left[ \Psi X,Y\right] +\Phi \left[ \Psi Y,X\right] +\frac{1}{2}\Psi
\left( \Phi \left[ X,Y\right] \right) -\frac{1}{2}\Psi \left( \Phi \left[ Y,X%
\right] \right) \\
&&+\frac{1}{2}\Phi \left( \Psi \left[ X,Y\right] \right) -\frac{1}{2}\Phi
\left( \Psi \left[ Y,X\right] \right) \\
&=&\left[ \Phi X,\Psi Y\right] +\left[ \Psi X,\Phi Y\right] +\Phi \left(
\Psi \left[ X,Y\right] \right) +\Psi \left( \Phi \left[ X,Y\right] \right) \\
&&-\Phi \left[ \Psi X,Y\right] -\Phi \left[ X,\Psi Y\right] -\Psi \left[
\Phi X,Y\right] -\Psi \left[ X,\Phi Y\right] .
\end{eqnarray*}%
In particular%
\begin{equation*}
\left[ \Phi ,\Phi \right] _{\mathcal{FN}}\left( X,Y\right) =2\left( \left[
\Phi X,\Phi Y\right] +\Phi ^{2}\left[ X,Y\right] -\Phi \left[ \Phi X,Y\right]
-\Phi \left[ X,\Phi Y\right] \right) =2N_{\Phi }\left( X,Y\right) .
\end{equation*}
\end{enumerate}
\end{proposition}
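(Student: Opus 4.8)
The plan is to prove (1) first and then obtain (2) by bilinear extension and evaluation on vector fields. For (1), the starting point is that $\mathcal{L}$ is an injective morphism of graded Lie algebras, so that the defining relation (\ref{Fr N bracket}) gives $\mathcal{L}_{\left[ \alpha \otimes X,\beta \otimes Y\right] _{\mathcal{FN}}}=\left[ \mathcal{L}_{\alpha \otimes X},\mathcal{L}_{\beta \otimes Y}\right] $, and by injectivity it suffices to identify the derivation on the right-hand side. Both sides lie in $\mathcal{L}\left( M\right) =\ker \daleth $ (the bracket of two elements of $\ker \daleth $ is again in $\ker \daleth $, exactly as in the computation preceding the definition of the Fr\"{o}licher--Nijenhuis bracket). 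The key observation I would use is that a graded derivation $D$ with $\daleth D=\left[ d,D\right] =0$ is completely determined by its restriction to $\Lambda ^{0}\left( M\right) $: the condition $\left[ d,D\right] =0$ forces $D\left( df\right) =\left( -1\right) ^{\left\vert D\right\vert }d\left( Df\right) $, and since $\Lambda ^{\ast }\left( M\right) $ is generated locally by functions and their differentials, the graded Leibniz rule then determines $D$ everywhere. Hence it is enough to compare the two derivations on an arbitrary function $f$.

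For $\gamma \otimes Z$, formula (\ref{L alfa tens X}) gives $\mathcal{L}_{\gamma \otimes Z}f=\left( Zf\right) \gamma $, because $\iota _{Z}f=0$ on $\Lambda ^{0}\left( M\right) $. I would then expand
$\left[ \mathcal{L}_{\alpha \otimes X},\mathcal{L}_{\beta \otimes Y}\right] f=\mathcal{L}_{\alpha \otimes X}\!\left( \left( Yf\right) \beta \right) -\left( -1\right) ^{kl}\mathcal{L}_{\beta \otimes Y}\!\left( \left( Xf\right) \alpha \right) $
by the graded Leibniz rules for $\mathcal{L}_{X}$ and $\iota _{X}$: the second order terms collapse to $\left( \left[ X,Y\right] f\right) \alpha \wedge \beta $ and the remaining terms reproduce exactly $\mathcal{L}_{\Theta }f$, where $\Theta $ is the claimed right-hand side. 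To read off $\Theta $ itself I would use $\mathcal{L}_{\Theta }f=\mathcal{I}_{\Theta }\left( df\right) $ (immediate from (\ref{L alfa tens X}) and (\ref{I alfa tens X})) together with the defining property $\mathcal{I}_{\Theta }\sigma \left( \cdots \right) =\sigma \left( \Theta \left( \cdots \right) \right) $ of Remark~\ref{Ifisigma(X,Y)}; since the covectors $df$ span $T_{p}^{\ast }M$ at each point, the action on functions determines $\Theta $. This yields (1).

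For (2), I would write $\Phi =\sum_{i}\alpha _{i}\otimes A_{i}$ and $\Psi =\sum_{j}\beta _{j}\otimes B_{j}$ with $\alpha _{i},\beta _{j}\in \Lambda ^{1}\left( M\right) $, apply (1) with $k=l=1$ to each pair, and evaluate the resulting element of $\Lambda ^{2}M\otimes TM$ on $\left( X,Y\right) $ using $\left( \omega \otimes Z\right) \left( X,Y\right) =\omega \left( X,Y\right) Z$. The computation requires only $\left( \mathcal{L}_{A}\beta \right) \left( Y\right) =A\left( \beta \left( Y\right) \right) -\beta \left( \left[ A,Y\right] \right) $, the Cartan formula $d\alpha \left( X,Y\right) =X\alpha \left( Y\right) -Y\alpha \left( X\right) -\alpha \left( \left[ X,Y\right] \right) $, and the identities $\Phi X=\sum_{i}\alpha _{i}\left( X\right) A_{i}$, $\Psi Y=\sum_{j}\beta _{j}\left( Y\right) B_{j}$. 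After regrouping, all the terms in which a coefficient function is differentiated cancel, and one is left with the stated formula; the two displayed expressions coincide upon using $\left[ Y,X\right] =-\left[ X,Y\right] $. Setting $\Psi =\Phi $ and comparing with the definition of $N_{\Phi }$ then gives $\left[ \Phi ,\Phi \right] _{\mathcal{FN}}\left( X,Y\right) =2N_{\Phi }\left( X,Y\right) $.

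The routine but delicate part is the sign and cancellation bookkeeping. In (1) the graded signs $\left( -1\right) ^{k}$ and $\left( -1\right) ^{kl}$ arising when $\alpha ,\beta $ are commuted past $\mathcal{L}_{X},\iota _{X}$ and $d$ must be tracked exactly; in (2) the crucial check is that every non-tensorial term (those of the form $A\left( \beta \left( Y\right) \right) $, $X\alpha \left( Y\right) $, and so on) drops out, so that the final expression is function-linear in $X$ and $Y$, as it must be to define an element of $\Lambda ^{2}M\otimes TM$. I expect this cancellation to be the main obstacle, since it is precisely the point at which the consistency of the whole formula is tested.
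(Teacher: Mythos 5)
The paper states Proposition~\ref{Calcul [,]FN} without proof (it is recalled from \cite{Frolicher56} and \cite{Michor2008}), so there is no internal argument to compare yours against; your proposal is a correct, self-contained proof along the classical Fr\"olicher--Nijenhuis lines. The two load-bearing points are exactly the ones you isolate: first, $\left[ \mathcal{L}_{\alpha \otimes X},\mathcal{L}_{\beta \otimes Y}\right] \in \ker \daleth =\mathcal{L}\left( M\right) $, which is precisely the Jacobi-identity computation the paper carries out just before defining the bracket; second, a graded derivation $D$ with $\daleth D=0$ is determined by its restriction to $\Lambda ^{0}\left( M\right) $, since $D\left( df\right) =\left( -1\right) ^{\left\vert D\right\vert }d\left( Df\right) $, derivations are local, and $\Lambda ^{\ast }M$ is locally generated by functions and their differentials. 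With $\mathcal{L}_{\gamma \otimes Z}f=\left( Zf\right) \gamma $ the commutator on functions is a short computation and the signs close up as claimed: for instance $-\left( -1\right) ^{kl}\left( Xf\right) \beta \wedge \mathcal{L}_{Y}\alpha =-\left( Xf\right) \mathcal{L}_{Y}\alpha \wedge \beta $, and the $\iota _{Y}\alpha \wedge d\beta $ term picks up exactly the factor $\left( -1\right) ^{k}$. For reading off $\Theta $ you can dispense with the detour through $\mathcal{I}_{\Theta }\left( df\right) $: if two candidates $\Theta _{1},\Theta _{2}$ give derivations in $\ker \daleth $ agreeing on $\Lambda ^{0}\left( M\right) $, they agree everywhere by the determination lemma, and then $\Theta _{1}=\Theta _{2}$ by the injectivity of $\mathcal{L}$, which the paper has already recorded. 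Part (2) by bilinear expansion and evaluation is routine; note that the cancellation of the non-tensorial terms you flag as the main obstacle is guaranteed a priori, since the left-hand side $\left[ \Phi ,\Psi \right] _{\mathcal{FN}}\left( X,Y\right) $ is tensorial by construction, so verifying it serves only as a consistency check on the signs. The passage from the first displayed expression to the second uses only the antisymmetry of the Lie bracket, and the case $\Psi =\Phi $ gives $2N_{\Phi }$ directly.
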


\section{Canonical solutions of Maurer-Cartan equation}

\begin{definition}
\label{action} Let $\Phi \in \Lambda ^{1}M\otimes TM$.

a) Let $\sigma \in \Lambda ^{p}M$. We define $\Phi \sigma \in \Lambda ^{p}M$
by $\Phi \sigma =\sigma $ if $p=0$ and%
\begin{equation*}
\left( \Phi \sigma \right) \left( V_{1},\cdot \cdot \cdot ,V_{p}\right)
=\sigma \left( \Phi V_{1},\cdot \cdot \cdot ,\Phi V_{p}\right) \ if\
p\geqslant 1,\ V_{1},\cdot \cdot \cdot ,V_{p}\in \mathfrak{X}\left( M\right)
.
\end{equation*}%
b) Let $\Psi \in \Lambda ^{p}M\otimes TM$. We define $\Phi \Psi \in \Lambda
^{p}M\otimes TM$ by $\Phi \Psi =\Psi $ if $p=0$ and 
\begin{equation*}
\Phi \Psi \left( V_{1},...,V_{p}\right) =\Phi \left( \Psi \left(
V_{1},...,V_{p}\right) \right) ,\ V_{1},...,V_{p}\in \mathfrak{X}\left(
M\right) \ if\ p\geqslant 1.
\end{equation*}
\end{definition}

\begin{lemma}
\label{I=produit}Let $\Phi \in \Lambda ^{1}M\otimes TM$, $\Psi \in \Lambda
^{2}M\otimes TM$. Then 
\begin{equation*}
\mathcal{I}_{\Psi }\Phi =\Phi \Psi .
\end{equation*}
\end{lemma}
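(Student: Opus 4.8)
The plan is to verify the identity directly by evaluating both sides on an arbitrary pair of vector fields, after reducing to the decomposable case. First I would write $\Phi = \sigma \otimes Z$ with $\sigma \in \Lambda^{1}M$ and $Z \in \mathfrak{X}\left( M\right) $, which is legitimate since both $\mathcal{I}_{\Psi}\left( \cdot \right) $ and the product $\left( \cdot \right) \Psi $ are linear in $\Phi $; the general case then follows by summing. Recall that $\mathcal{I}_{\Psi}$ is a graded derivation of $\Lambda ^{\ast }M$ of degree $\left\vert \mathcal{I}_{\Psi}\right\vert =\left\vert \Psi \right\vert -1=1$, and that it acts on a $TM$-valued form by acting on the scalar-form factor alone, so that $\mathcal{I}_{\Psi}\left( \sigma \otimes Z\right) =\left( \mathcal{I}_{\Psi}\sigma \right) \otimes Z$. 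Consequently both $\mathcal{I}_{\Psi}\Phi $ and $\Phi \Psi $ lie in $\Lambda ^{2}M\otimes TM$, and it suffices to show that they agree when evaluated on vector fields $X,Y\in \mathfrak{X}\left( M\right) $.

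For the left-hand side, I would apply Remark \ref{Ifisigma(X,Y)} to the $1$-form $\sigma $: since $\Psi \in \Lambda ^{2}M\otimes TM$, one has $\left( \mathcal{I}_{\Psi}\sigma \right) \left( X,Y\right) =\sigma \left( \Psi \left( X,Y\right) \right) $, hence
\[
\left( \mathcal{I}_{\Psi}\Phi \right) \left( X,Y\right) =\left( \mathcal{I}_{\Psi}\sigma \right) \left( X,Y\right) Z=\sigma \left( \Psi \left( X,Y\right) \right) Z.
\]
For the right-hand side, Definition \ref{action} b) gives $\left( \Phi \Psi \right) \left( X,Y\right) =\Phi \left( \Psi \left( X,Y\right) \right) $, and under the canonical identification of $\Phi =\sigma \otimes Z$ with the endomorphism $W\mapsto \sigma \left( W\right) Z$ of $TM$ fixed in the opening Notation, this equals $\sigma \left( \Psi \left( X,Y\right) \right) Z$. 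The two expressions coincide, which proves the lemma.

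The verification is essentially bookkeeping; the only point requiring care is the convention by which the contraction-type derivation $\mathcal{I}_{\Psi}$ is extended from $\Lambda ^{\ast }M$ to $\Lambda ^{\ast }M\otimes TM$, namely that it operates on the form factor and leaves the $TM$-factor untouched, so that evaluation on $\left( X,Y\right) $ commutes with pulling the vector field $Z$ through. Once this is made explicit, Remark \ref{Ifisigma(X,Y)} does all the work, and no sign subtleties arise because $\sigma $ has degree $1$ and the contraction $\iota _{Z}\sigma =\sigma \left( Z\right) $ is a scalar.
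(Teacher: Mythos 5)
Your proof is correct and follows essentially the same route as the paper: reduce to decomposable elements by bilinearity and evaluate both sides on a pair of vector fields. The only cosmetic difference is that you decompose only $\Phi$ and invoke Remark~\ref{Ifisigma(X,Y)} to handle a general $\Psi\in\Lambda^{2}M\otimes TM$, whereas the paper also writes $\Psi=\alpha\otimes X$ and computes $\mathcal{I}_{\alpha\otimes X}\beta=\beta\left(X\right)\alpha$ directly; both arguments rest on the same convention, used implicitly by the paper as well, that $\mathcal{I}_{\Psi}$ acts on the scalar-form factor of $\sigma\otimes Z$ and leaves the $TM$-factor untouched.
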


\begin{proof}
It is sufficient to prove the assertion for $\Psi =\alpha \otimes X$, $\Phi
=\beta \otimes Y$, $\alpha \in \Lambda ^{2}\left( M\right) $, $\beta \in
\Lambda ^{1}\left( M\right) $, $X,Y\in \mathfrak{X}\left( M\right) $.

For every $Z_{1},Z_{2}\in \mathfrak{X}\left( M\right) $ we have 
\begin{equation*}
\left( \beta \otimes Y\right) \left( \alpha \otimes X\right) \left(
Z_{1},Z_{2}\right) =\left( \beta \otimes Y\right) \left( \alpha \left(
Z_{1},Z_{2}\right) \otimes X\right) =\beta \left( X\right) \alpha \left(
Z_{1},Z_{2}\right) \otimes Y.
\end{equation*}%
Since 
\begin{equation*}
\mathcal{I}_{\alpha \otimes X}\beta \otimes Y=\left( \mathcal{I}_{\alpha
\otimes X}\beta \right) \otimes Y=\beta \left( X\right) \alpha \otimes Y,
\end{equation*}%
the Lemma is proved.
\end{proof}

\begin{definition}
Let $D\in \mathcal{D}^{k}\left( M\right) $ and $\Phi \in \Lambda
^{1}M\otimes TM$ invertible. We define $\Phi ^{-1}D\Phi :\Lambda ^{\ast
}M\rightarrow \Lambda ^{\ast }M$ by $\Phi ^{-1}D\Phi \left( \sigma \right)
=\Phi ^{-1}D\left( \Phi \sigma \right) $.
\end{definition}

\begin{lemma}
\label{Fi-1DFi}Let $D\in \mathcal{D}^{k}\left( M\right) $ and $\Phi \in
\Lambda ^{1}M\otimes TM$ invertible. Then $\Phi ^{-1}D\Phi \in \mathcal{D}%
^{k}\left( M\right) .$
\end{lemma}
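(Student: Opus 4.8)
The plan is to recognize $\Phi^{-1}D\Phi$ as the conjugate of $D$ by a degree-preserving automorphism of the algebra $\left(\Lambda^{\ast}M,\wedge\right)$, and then to invoke the general principle that conjugation by such an automorphism sends graded derivations of degree $k$ to graded derivations of degree $k$. Write $\theta_{\Phi}\sigma=\Phi\sigma$ for the action on forms of Definition \ref{action}, so that $\Phi^{-1}D\Phi=\theta_{\Phi^{-1}}\circ D\circ\theta_{\Phi}$. First I would check that $\theta_{\Phi}$ preserves degree, which is immediate from Definition \ref{action}, and that it is multiplicative: for $\sigma\in\Lambda^{p}M$, $\tau\in\Lambda^{q}M$ and $V_{1},\dots,V_{p+q}\in\mathfrak{X}\left(M\right)$, evaluating $\theta_{\Phi}\left(\sigma\wedge\tau\right)$ on $\left(V_{1},\dots,V_{p+q}\right)$ amounts to evaluating $\sigma\wedge\tau$ on $\left(\Phi V_{1},\dots,\Phi V_{p+q}\right)$; since the wedge product is a signed sum of products of the two factors applied to subtuples of the $V_{i}$, substituting $\Phi V_{i}$ for $V_{i}$ throughout reproduces exactly $\left(\theta_{\Phi}\sigma\right)\wedge\left(\theta_{\Phi}\tau\right)$. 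Hence $\theta_{\Phi}\left(\sigma\wedge\tau\right)=\theta_{\Phi}\sigma\wedge\theta_{\Phi}\tau$, and $\theta_{\Phi}$ is a homomorphism of graded algebras of degree $0$.

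Next I would establish invertibility of $\theta_{\Phi}$. A direct evaluation gives $\theta_{\Phi}\theta_{\Psi}=\theta_{\Psi\Phi}$ for endomorphisms $\Phi,\Psi$ (the action on forms reverses composition, as for pullbacks), together with $\theta_{Id}=Id$. Taking $\Psi=\Phi^{-1}$, the hypothesis that $\Phi$ is invertible yields $\theta_{\Phi}\theta_{\Phi^{-1}}=\theta_{\Phi^{-1}}\theta_{\Phi}=Id$, so $\theta_{\Phi}$ is an automorphism of $\left(\Lambda^{\ast}M,\wedge\right)$ with inverse $\theta_{\Phi^{-1}}$. In particular the operator $\Phi^{-1}D\Phi=\theta_{\Phi}^{-1}\circ D\circ\theta_{\Phi}$ is a well-defined linear map $\Lambda^{\ast}M\to\Lambda^{\ast}M$, and since $\theta_{\Phi}^{-1}$ and $\theta_{\Phi}$ preserve degree while $D$ raises it by $k$, the composite raises degree by $k$.

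It then remains to verify the graded Leibniz rule. For $a\in\Lambda^{p}M$ and $b\in\Lambda^{\ast}M$ I would compute
\begin{align*}
\left(\theta_{\Phi}^{-1}D\theta_{\Phi}\right)\left(a\wedge b\right)
&=\theta_{\Phi}^{-1}D\left(\theta_{\Phi}a\wedge\theta_{\Phi}b\right)\\
&=\theta_{\Phi}^{-1}\left(D\left(\theta_{\Phi}a\right)\wedge\theta_{\Phi}b+\left(-1\right)^{kp}\theta_{\Phi}a\wedge D\left(\theta_{\Phi}b\right)\right)\\
&=\left(\theta_{\Phi}^{-1}D\theta_{\Phi}\right)\left(a\right)\wedge b+\left(-1\right)^{kp}a\wedge\left(\theta_{\Phi}^{-1}D\theta_{\Phi}\right)\left(b\right),
\end{align*}
using multiplicativity of $\theta_{\Phi}$ to pass to the second line, the graded Leibniz rule for $D$ of degree $k$ together with $\deg\theta_{\Phi}a=p$ to pass to the third, and multiplicativity of $\theta_{\Phi}^{-1}$ for the final equality. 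This exhibits $\Phi^{-1}D\Phi$ as a graded derivation of degree $k$, i.e. an element of $\mathcal{D}^{k}\left(M\right)$, as claimed. I do not expect a serious obstacle: the only points requiring care are the multiplicativity of the form-level action $\theta_{\Phi}$ and the degree bookkeeping that produces the correct sign $\left(-1\right)^{kp}$, both of which rest on $\theta_{\Phi}$ being a degree-$0$ algebra homomorphism.
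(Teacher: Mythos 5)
Your proposal is correct and follows essentially the same route as the paper: the paper's proof also rests on the multiplicativity $\Phi\left(\sigma\wedge\eta\right)=\Phi\sigma\wedge\Phi\eta$ of the action on forms, applies the graded Leibniz rule for $D$ to $\Phi\sigma\wedge\Phi\eta$, and then applies $\Phi^{-1}$, exactly as in your displayed computation. Your additional verifications (multiplicativity of $\theta_{\Phi}$, the reversal identity $\theta_{\Phi}\theta_{\Psi}=\theta_{\Psi\Phi}$, and invertibility) are details the paper leaves implicit but do not change the argument.
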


\begin{proof}
Let $\sigma \in \Lambda ^{p}\left( M\right) $, $\eta \in \Lambda ^{q}\left(
M\right) $. Since $\Phi \left( \sigma \wedge \eta \right) =\Phi \sigma
\wedge \Phi \eta $, it follows that%
\begin{eqnarray*}
\left( \Phi ^{-1}D\Phi \right) \left( \sigma \wedge \eta \right) &=&\Phi
^{-1}D\left( \Phi \left( \sigma \wedge \eta \right) \right) =\Phi
^{-1}D\left( \Phi \sigma \wedge \Phi \eta \right) \\
&=&\Phi ^{-1}\left( D\Phi \sigma \wedge \Phi \eta +\left( -1\right)
^{pk}\Phi \sigma \wedge D\Phi \eta \right) \\
&=&\Phi ^{-1}D\Phi \sigma \wedge \eta +\left( -1\right) ^{pk}\sigma \wedge
\Phi ^{-1}D\Phi \eta .
\end{eqnarray*}
\end{proof}

\begin{notation}
Let $\Phi \in \Lambda ^{1}M\otimes TM$ such that $R_{\Phi }=Id_{TM}+\Phi $
is invertible. Set 
\begin{equation*}
d_{\Phi }=R_{\Phi }dR_{\Phi }^{-1},
\end{equation*}%
\begin{equation*}
e_{\Phi }=d_{\Phi }-d
\end{equation*}%
and%
\begin{equation*}
b\left( \Phi \right) =-\frac{1}{2}R_{\Phi }^{-1}\left[ \Phi ,\Phi \right] _{%
\mathcal{FN}}.
\end{equation*}
\end{notation}

The following Theorem is a refinement of results from \cite%
{BartolomeisMatveev13} and \cite{BartolomeisTomassini13} :

\begin{theorem}
\label{Calcul RFIdRFi-1}Let $\Phi \in \Lambda ^{1}M\otimes TM$ such that $%
R_{\Phi }=Id_{T\left( M\right) }+\Phi $ is invertible. Then 
\begin{equation}
e_{\Phi }=\mathcal{L}_{\Phi }+\mathcal{I}_{b\left( \Phi \right) }.
\label{efi=Lfi+Ibfi}
\end{equation}
\end{theorem}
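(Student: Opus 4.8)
The plan is to show that $e_{\Phi }$ is a graded derivation of degree $1$, expand it in the canonical form $\mathcal{L}_{A}+\mathcal{I}_{B}$ furnished by Lemma~\ref{Lemme technique}, and then pin down $A$ and $B$ separately. Since $R_{\Phi }$ is an invertible endomorphism of $TM$, it acts on $\Lambda ^{\ast }M$ as an algebra automorphism (it commutes with $\wedge $, exactly as in the proof of Lemma~\ref{Fi-1DFi}); hence $d_{\Phi }=R_{\Phi }dR_{\Phi }^{-1}$ is, by Lemma~\ref{Fi-1DFi} applied to $d$ with the invertible endomorphism $R_{\Phi }^{-1}$, an element of $\mathcal{D}^{1}\left( M\right) $, and so is $e_{\Phi }=d_{\Phi }-d$. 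Thus I may write $e_{\Phi }=\mathcal{L}_{A}+\mathcal{I}_{B}$ with $A\in \Lambda ^{1}M\otimes TM$ and $B\in \Lambda ^{2}M\otimes TM$ uniquely determined, and the goal becomes $A=\Phi $ and $B=b\left( \Phi \right) $.

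To identify $A$, I would evaluate on functions. For $f\in \Lambda ^{0}M$ one has $R_{\Phi }^{-1}f=f$ and $R_{\Phi }\left( df\right) =df+df\circ \Phi $, since $\left( R_{\Phi }df\right) \left( V\right) =df\left( V+\Phi V\right) $; hence $e_{\Phi }f=df\circ \Phi $. On the other hand $\mathcal{I}_{B}$ annihilates $\Lambda ^{0}M$ by Lemma~\ref{Tensorial}, while expanding \eqref{L alfa tens X} on a function gives $\mathcal{L}_{A}f=df\circ A$. Comparing, $df\circ A=df\circ \Phi $ for every $f$; as the covectors $df_{p}$ exhaust $T_{p}^{\ast }M$ at each point, this forces $A=\Phi $.

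To identify $B$, I would apply $\daleth $. Since $\mathcal{L}\left( M\right) =\ker \daleth $ and $\daleth \mathcal{I}_{B}=\mathcal{L}_{B}$ (Lemma~\ref{daled ferme}, as $\left\vert B\right\vert =2$), we get $\daleth e_{\Phi }=\mathcal{L}_{B}$. Independently, $d_{\Phi }^{2}=R_{\Phi }d^{2}R_{\Phi }^{-1}=0$, so expanding $0=\left[ d_{\Phi },d_{\Phi }\right] =\left[ d+e_{\Phi },d+e_{\Phi }\right] $ yields the Maurer-Cartan relation $\daleth e_{\Phi }=-\tfrac{1}{2}\left[ e_{\Phi },e_{\Phi }\right] $. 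Now I compute $\left[ e_{\Phi },e_{\Phi }\right] $ with $e_{\Phi }=\mathcal{L}_{\Phi }+\mathcal{I}_{B}$ via Lemma~\ref{[Lfi,ICsi]}: its $\mathcal{L}$-component is $\left[ \Phi ,\Phi \right] _{\mathcal{FN}}+2\,\mathcal{I}_{B}\Phi $, and $\mathcal{I}_{B}\Phi =\Phi B$ by Lemma~\ref{I=produit}. Comparing the two (purely $\mathcal{L}$-type) expressions for $\daleth e_{\Phi }$ and using the uniqueness of the decomposition (Lemma~\ref{Lemme technique}) together with injectivity of $\mathcal{L}$ (Lemma~\ref{Tensorial}), I obtain $B=-\tfrac{1}{2}\big( \left[ \Phi ,\Phi \right] _{\mathcal{FN}}+2\Phi B\big) $, i.e. $R_{\Phi }B=\left( Id+\Phi \right) B=-\tfrac{1}{2}\left[ \Phi ,\Phi \right] _{\mathcal{FN}}$, whence $B=-\tfrac{1}{2}R_{\Phi }^{-1}\left[ \Phi ,\Phi \right] _{\mathcal{FN}}=b\left( \Phi \right) $.

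The routine points — that $R_{\Phi }$ preserves $\wedge $ on forms, and the sign bookkeeping in $\left[ d+e_{\Phi },d+e_{\Phi }\right] $ — are mechanical. The one genuinely load-bearing step is the last: recognizing that $d_{\Phi }^{2}=0$ forces the Maurer-Cartan relation, and then reading the $\mathcal{L}$-component of $\left[ e_{\Phi },e_{\Phi }\right] $ off Lemma~\ref{[Lfi,ICsi]} so that solving for $B$ reduces to the linear equation $R_{\Phi }B=-\tfrac{1}{2}\left[ \Phi ,\Phi \right] _{\mathcal{FN}}$. Getting the coefficient $2\,\mathcal{I}_{B}\Phi $ right, rather than dropping a factor or a sign from the $\left( -1\right) ^{k_{1}k_{2}}$ terms, is where I would be most careful.
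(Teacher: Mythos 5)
Your proof is correct, and its second half takes a genuinely different route from the paper's. The identification of the $\mathcal{L}$-part by evaluating on $\Lambda ^{0}\left( M\right) $ coincides with the first half of the paper's argument. But where the paper then grinds out $\mathcal{L}_{\Phi }\sigma $, $d_{\Phi }\sigma $ and $\mathcal{I}_{b\left( \Phi \right) }\sigma $ pointwise on $1$-forms and matches them term by term against the Nijenhuis tensor of $R_{\Phi }$, you instead observe that $d_{\Phi }^{2}=0$ forces $\daleth e_{\Phi }+\tfrac{1}{2}\left[ e_{\Phi },e_{\Phi }\right] =0$, and then solve for the $\mathcal{I}$-component using Lemma~\ref{[Lfi,ICsi]}, Lemma~\ref{I=produit}, the uniqueness of the decomposition and the injectivity of $\mathcal{L}$. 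This is exactly the content that the paper postpones to Theorem~\ref{Solution MC}: your computation of the Maurer--Cartan identity from $\left[ d_{\Phi },d_{\Phi }\right] =0$ is the paper's proof of part $(i)$ there, and your linear equation $R_{\Phi }B=-\tfrac{1}{2}\left[ \Phi ,\Phi \right] _{\mathcal{FN}}$ is its part $(ii)$; there is no circularity, since neither of those arguments uses the theorem you are proving. What your route buys is the elimination of the longest computation in the paper's proof (the three displays culminating in the comparison with $N_{R_{\Phi }}$), at the cost of leaning on the sign conventions in Lemma~\ref{[Lfi,ICsi]} — which you handle correctly, including the factor $2\,\mathcal{I}_{B}\Phi $ coming from $-\left( -1\right) ^{k_{1}k_{2}}=+1$ and the fact that $\daleth \mathcal{I}_{B}=\mathcal{L}_{B}$ carries no sign when $\left\vert B\right\vert =2$. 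As a side effect, your organization makes Theorem~\ref{Solution MC} an immediate corollary rather than a separate statement.
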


\begin{proof}
Since both terms of (\ref{efi=Lfi+Ibfi}) are derivations of degree $1$, it
is enough to prove (\ref{efi=Lfi+Ibfi}) on $\Lambda ^{0}\left( M\right) $
and $\Lambda ^{1}\left( M\right) $.

Let $f\in \Lambda ^{0}\left( M\right) $ and $X\in \mathfrak{X}\left(
M\right) $. Then%
\begin{equation}
d_{\Phi }f\left( X\right) =\left( R_{\Phi }dR_{\Phi }^{-1}f\right) \left(
X\right) =\left( R_{\Phi }df\right) \left( X\right) =df\left( Id_{TM}+\Phi
\right) \left( X\right) =df\left( X\right) +df\left( \Phi \left( X\right)
\right) .  \label{dfif(X)}
\end{equation}%
If $\alpha \in \Lambda ^{1}\left( M\right) $,\ $X\in \mathfrak{X}\left(
M\right) $, by (\ref{I alfa tens X}), 
\begin{equation*}
\mathcal{I}_{\alpha \otimes Y}\left( df\right) \left( X\right) =\left(
\alpha \otimes \iota _{Y}df\right) \left( X\right) =df\left( Y\right) \left(
\alpha \left( X\right) \right) =df\left( \alpha \otimes Y\right) X
\end{equation*}%
and by linearity we obtain%
\begin{equation*}
\mathcal{I}_{\Phi }\left( df\right) \left( X\right) =df\left( \Phi \left(
X\right) \right) .
\end{equation*}%
So, from (\ref{dfif(X)}) it follows that%
\begin{equation*}
d_{\Phi }f\left( X\right) =df\left( X\right) +\mathcal{I}_{\Phi }\left(
df\right) \left( X\right) =\left( d+\left[ \mathcal{I}_{\Phi },d\right]
\right) f\left( X\right) =\left( d+\mathcal{L}_{\Phi }\right) f\left(
X\right) =\left( d+\mathcal{L}_{\Phi }\right) f\left( X\right) .
\end{equation*}%
Since $\mathcal{I}_{\Phi }$ is of type $i_{\ast }$, $\mathcal{I}_{\Phi }f=0$
and therefore (\ref{efi=Lfi+Ibfi}) is verified for every $f\in \Lambda
^{0}\left( M\right) $.

Let now $\sigma \in \Lambda ^{1}\left( M\right) $.

We will prove firstly that%
\begin{equation}
\mathcal{I}_{b\left( \Phi \right) }\left( \sigma \right) \left( X,Y\right)
=-\sigma \left( \frac{1}{2}R_{\Phi }^{-1}N_{R_{\Phi }}\left( X,Y\right)
\right) .  \label{Ibfisigma(X,Y)}
\end{equation}%
By using Remark \ref{d=lId}, we have 
\begin{equation*}
\mathcal{L}_{\left[ Id_{T\left( M\right) },Id_{T\left( M\right) }\right] }=%
\left[ \mathcal{L}_{Id_{T\left( M\right) }},\mathcal{L}_{Id_{T\left(
M\right) }}\right] =\left[ d,d\right] =0
\end{equation*}%
and%
\begin{equation*}
\left[ \mathcal{L}_{\Phi },\mathcal{L}_{Id_{T\left( M\right) }}\right] =%
\left[ \mathcal{L}_{\Phi },d\right] =\daleth \mathcal{L}_{\Phi }=0.
\end{equation*}%
So%
\begin{equation*}
\left[ R_{\Phi },R_{\Phi }\right] _{\mathcal{FN}}=\left[ Id_{T\left(
M\right) }+\Phi ,Id_{T\left( M\right) }+\Phi \right] _{\mathcal{FN}}=\left[
\Phi ,\Phi \right] _{\mathcal{FN}}
\end{equation*}%
and by Proposition \ref{Calcul [,]FN}%
\begin{equation}
2N_{R_{\Phi }}=\left[ R_{\Phi },R_{\Phi }\right] _{\mathcal{FN}}=\left[ \Phi
,\Phi \right] _{\mathcal{FN}}=2N_{\Phi }.  \label{[Rfi,Rfi]=Nijenhuys}
\end{equation}

By Remark \ref{Ifisigma(X,Y)} it follows that 
\begin{equation*}
\mathcal{I}_{-\frac{1}{2}R_{\Phi }^{-1}\left[ \Phi ,\Phi \right] _{\mathcal{%
FN}}}\left( \sigma \right) \left( X,Y\right) =-\sigma \left( \frac{1}{2}%
R_{\Phi }^{-1}\left[ \Phi ,\Phi \right] _{\mathcal{FN}}\left( X,Y\right)
\right) =-\sigma \left( \frac{1}{2}R_{\Phi }^{-1}N_{R_{\Phi }}\left(
X,Y\right) \right)
\end{equation*}%
and (\ref{Ibfisigma(X,Y)}) is proved.

We will compute now $\mathcal{L}_{\Phi }\sigma $, $d_{\Phi }\sigma $ and $%
\mathcal{I}_{b\left( \Phi \right) }\sigma $:

We remark that (\ref{[Ifi,d]=Lfi}) gives $\mathcal{L}_{\Phi }\sigma =\left[ 
\mathcal{I}_{\Phi },d\right] \left( \sigma \right) $ and thus

\begin{eqnarray}
\mathcal{L}_{\Phi }\sigma &=&\left[ \mathcal{I}_{\Phi },d\right] \left(
\sigma \right) \left( X,Y\right) =\left( \mathcal{I}_{\Phi }d\sigma \right)
\left( X,Y\right) -d\left( \mathcal{I}_{\Phi }\sigma \right) \left(
X,Y\right)  \notag \\
&=&d\sigma \left( \Phi X,Y\right) +d\sigma \left( X,\Phi Y\right)  \notag \\
&&-X\left( \left( \mathcal{I}_{\Phi }\sigma \right) \left( Y\right) \right)
+Y\left( \left( \mathcal{I}_{\Phi }\sigma \right) \left( X\right) \right)
+\left( \mathcal{I}_{\Phi }\sigma \right) \left[ X,Y\right]  \notag \\
&=&\left( \Phi X\right) \left( \sigma \left( Y\right) \right) -Y\left(
\sigma \left( \Phi X\right) \right) -\sigma \left( \left[ \Phi X,Y\right]
\right)  \notag \\
&&+X\left( \sigma \left( \Phi Y\right) \right) -\left( \Phi Y\right) \left(
\sigma \left( X\right) \right) -\sigma \left( \left[ X,\Phi Y\right] \right)
\notag \\
&&-X\left( \sigma \left( \Phi \left( Y\right) \right) \right) +Y\left(
\sigma \left( \Phi \left( X\right) \right) \right) +\sigma \left( \Phi
\left( \left[ X,Y\right] \right) \right)  \notag \\
&=&\left( \Phi X\right) \left( \sigma \left( Y\right) \right) -\left( \Phi
Y\right) \left( \sigma \left( X\right) \right) +\sigma \left( \Phi \left( 
\left[ X,Y\right] \right) -\left[ \Phi X,Y\right] -\left[ X,\Phi Y\right]
\right) ,  \label{Calcul d_R 1}
\end{eqnarray}

\begin{eqnarray}
d_{\Phi }\sigma \left( X,Y\right) &=&\left( R_{\Phi }dR_{\Phi }^{-1}\right)
\sigma \left( X,Y\right) =\left( dR_{\Phi }^{-1}\sigma \right) \left(
R_{\Phi }X,R_{\Phi }Y\right)  \notag \\
&=&\left( R_{\Phi }X\right) \left( \left( R_{\Phi }^{-1}\sigma \right)
\left( R_{\Phi }Y\right) \right) -\left( R_{\Phi }Y\right) \left( \left(
R_{\Phi }^{-1}\sigma \right) \left( R_{\Phi }X\right) \right) -R_{\Phi
}^{-1}\sigma \left( \left[ R_{\Phi }X,R_{\Phi }Y\right] \right)  \notag \\
&=&\left( R_{\Phi }X\right) \left( \sigma Y\right) -\left( R_{\Phi }Y\right)
\left( \sigma X\right) -\sigma \left( R_{\Phi }^{-1}\left( \left[ R_{\Phi
}X,R_{\Phi }Y\right] \right) \right)  \notag \\
&=&\left( X+\Phi X\right) \left( \sigma Y\right) -\left( Y+\Phi Y\right)
\left( \sigma X\right) -\sigma \left( R_{\Phi }^{-1}\left( \left[ R_{\Phi
}X,R_{\Phi }Y\right] \right) \right)  \notag \\
&=&\left( \Phi X\right) \left( \sigma Y\right) -\left( \Phi Y\right) \left(
\sigma X\right) +X\left( \sigma Y\right) -Y\left( \sigma X\right) -\sigma
\left( R_{\Phi }^{-1}\left( \left[ R_{\Phi }X,R_{\Phi }Y\right] \right)
\right) .  \notag \\
&&  \label{Calcul d_R 2}
\end{eqnarray}

By developping (\ref{Ibfisigma(X,Y)}) we have%
\begin{eqnarray}
\left( \mathcal{I}_{b\left( \Phi \right) }\sigma \right) \left( X,Y\right)
&=&-\sigma \left( R_{\Phi }^{-1}N_{R_{\Phi }}\left( X,Y\right) \right) 
\notag \\
&=&\sigma \left( R_{\Phi }^{-1}\left( \left[ R_{\Phi }X,R_{\Phi }Y\right]
\right) +R_{\Phi }\left[ X,Y\right] -\left[ R_{\Phi }X,Y\right] -\left[
X,R_{\Phi }Y\right] \right)  \notag \\
&=&\sigma \left( R_{\Phi }^{-1}\left( \left[ R_{\Phi }X,R_{\Phi }Y\right]
\right) \right) +\sigma \left( \left[ X,Y\right] +\Phi \left[ X,Y\right]
\right)  \notag \\
&&-\sigma \left( \left[ X,Y\right] +\left[ \Phi X,Y\right] +\left[ X,Y\right]
+\left[ X,\Phi Y\right] \right)  \notag \\
&=&\sigma \left( R_{\Phi }^{-1}\left( \left[ R_{\Phi }X,R_{\Phi }Y\right]
\right) \right) +\sigma \left( \Phi \left[ X,Y\right] -\left[ X,Y\right] -%
\left[ \Phi X,Y\right] -\left[ X,\Phi Y\right] \right)  \label{Calcul d_R 3}
\\
&&-\sigma \left( \left[ X,Y\right] +\left[ \Phi X,Y\right] +\left[ X,\Phi Y%
\right] \right) .  \notag
\end{eqnarray}%
Since%
\begin{equation*}
d\sigma \left( X,Y\right) =X\left( \sigma Y\right) -Y\left( \sigma X\right)
-\sigma \left[ X,Y\right] ,
\end{equation*}%
by comparing (\ref{Calcul d_R 1}),\ (\ref{Calcul d_R 2}) and (\ref{Calcul
d_R 3}) it follows that (\ref{efi=Lfi+Ibfi}) is verified for each form in $%
\Lambda ^{1}M$ and the Lemma is proved.
\end{proof}

\begin{theorem}
\label{Solution MC}Let $\Phi \in \Lambda ^{1}M\otimes TM$ such that $R_{\Phi
}=Id_{T\left( M\right) }+\Phi $ is invertible. Then:

$i)$ $e_{\Phi }$ is a solution of the Maurer-Cartan equation in $\left( 
\mathcal{D}^{\ast }\left( M\right) ,\left[ \cdot ,\cdot \right] ,\daleth
\right) $.

$ii)$ Let $\Psi \in \Lambda ^{2}M\otimes TM$ such that $D=\mathcal{L}_{\Phi
}+\mathcal{I}_{\Psi }$ is a solution of the Maurer-Cartan equation in $%
\left( \mathcal{D}^{\ast }\left( M\right) ,\left[ \cdot ,\cdot \right]
,\daleth \right) $.) Than $\Psi =b\left( \Phi \right) $.
\end{theorem}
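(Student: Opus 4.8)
The plan is to reduce the Maurer-Cartan equation to the vanishing of the square of a perturbed differential, and then read off both statements from that. As a preliminary step I would observe that for any $a \in \mathcal{D}^1(M)$, homogeneity of odd degree gives $\daleth a = [d,a] = da + ad$ and $\frac{1}{2}[a,a] = a^2$, so that, using $d^2 = 0$,
\[
(d+a)^2 = d^2 + da + ad + a^2 = da + ad + a^2 = \daleth a + \frac{1}{2}[a,a].
\]
Hence $a$ solves the Maurer-Cartan equation in $\left(\mathcal{D}^{\ast}(M), [\cdot,\cdot], \daleth\right)$ if and only if $(d+a)^2 = 0$.

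For $i)$, the key remark is that $d + e_{\Phi} = d + (d_{\Phi} - d) = d_{\Phi} = R_{\Phi} d R_{\Phi}^{-1}$ is literally the conjugate of $d$ by the invertible operator $R_{\Phi}$ acting on $\Lambda^{\ast} M$. Therefore
\[
(d + e_{\Phi})^2 = R_{\Phi} d R_{\Phi}^{-1} R_{\Phi} d R_{\Phi}^{-1} = R_{\Phi} d^2 R_{\Phi}^{-1} = 0,
\]
and the preliminary reformulation yields at once that $e_{\Phi}$ satisfies the Maurer-Cartan equation. This disposes of $i)$ without having to separate the $\mathcal{L}$- and $\mathcal{I}$-components.

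For $ii)$, I would again pass to the condition $(d+D)^2 = 0$. Writing $d = \mathcal{L}_{Id_{T(M)}}$ (Remark \ref{d=lId}) and using the linearity of $\mathcal{L}$, I get $d + D = \mathcal{L}_{R_{\Phi}} + \mathcal{I}_{\Psi}$, and I would extract the $\mathcal{L}(M)$-component of $(d+D)^2 = \frac{1}{2}[d+D, d+D]$ by applying Lemma \ref{[Lfi,ICsi]} with $\Phi_1 = \Phi_2 = R_{\Phi}$ ($k_1 = k_2 = 1$) and $\Psi_1 = \Psi_2 = \Psi$. The $\mathcal{L}$-part of the bracket is $\mathcal{L}_{[R_{\Phi}, R_{\Phi}]_{\mathcal{FN}} + 2\,\mathcal{I}_{\Psi} R_{\Phi}}$; rewriting $\mathcal{I}_{\Psi} R_{\Phi} = R_{\Phi} \Psi$ via Lemma \ref{I=produit}, invoking the identity $[R_{\Phi}, R_{\Phi}]_{\mathcal{FN}} = [\Phi, \Phi]_{\mathcal{FN}}$ from the proof of Theorem \ref{Calcul RFIdRFi-1}, and substituting $b(\Phi) = -\frac{1}{2} R_{\Phi}^{-1}[\Phi, \Phi]_{\mathcal{FN}}$, the $\mathcal{L}$-component of $(d+D)^2$ collapses to $\mathcal{L}_{R_{\Phi}(\Psi - b(\Phi))}$. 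Since $(d+D)^2 = 0$ forces this component to vanish, and since $\mathcal{L}$ is injective and $R_{\Phi}$ is invertible, I conclude $R_{\Phi}(\Psi - b(\Phi)) = 0$, that is $\Psi = b(\Phi)$.

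The main thing to get right is the sign and degree bookkeeping in Lemma \ref{[Lfi,ICsi]}: with $k_1 = k_2 = 1$ the coefficient $-(-1)^{k_1 k_2}$ equals $+1$, so the two insertion terms $\mathcal{I}_{\Psi} R_{\Phi}$ add rather than cancel, producing the factor $2$ that, after the $\frac{1}{2}$ in $(d+D)^2 = \frac{1}{2}[d+D,d+D]$, gives precisely the single term $R_{\Phi} \Psi$ and hence the clean expression $R_{\Phi}(\Psi - b(\Phi))$. I expect this to be the only delicate point; conceptually, the proof is driven entirely by the reformulation $(d+a)^2 = 0$ together with the observation that for $ii)$ the $\mathcal{L}$-component alone already determines $\Psi$, so the $\mathcal{I}$-component never needs to be computed.
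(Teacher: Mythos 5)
Your proof is correct and follows essentially the same route as the paper's: part $i)$ rests on the fact that $d+e_{\Phi }=d_{\Phi }=R_{\Phi }dR_{\Phi }^{-1}$ squares to zero (the paper phrases this as $\left[ d,d_{\Phi }\right] -\left[ d,d_{\Phi }\right] =0$ using $\left[ d_{\Phi },d_{\Phi }\right] =0$, which is the same computation since $\frac{1}{2}\left[ a,a\right] =a^{2}$ in odd degree), and part $ii)$ extracts the $\mathcal{L}\left( M\right) $-component of the Maurer-Cartan expression and concludes via the injectivity of $\mathcal{L}$, Lemma \ref{I=produit} and the invertibility of $R_{\Phi }$, exactly as the paper does. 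The only difference is cosmetic: you absorb $d=\mathcal{L}_{Id_{T\left( M\right) }}$ into the bracket by writing $d+D=\mathcal{L}_{R_{\Phi }}+\mathcal{I}_{\Psi }$ and apply Lemma \ref{[Lfi,ICsi]} once, whereas the paper computes $\daleth D$ and $\left[ D,D\right] $ separately before projecting.
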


\begin{proof}
$i)$ Since $\left[ d,d\right] =0$, $\left[ R_{\Phi }dR_{\Phi }^{-1},R_{\Phi
}dR_{\Phi }^{-1}\right] =0$, and $\left[ d,R_{\Phi }dR_{\Phi }^{-1}\right] =%
\left[ R_{\Phi }dR_{\Phi }^{-1},d\right] $ it follows that%
\begin{eqnarray*}
\daleth e_{\Phi }+\frac{1}{2}\left[ e_{\Phi },e_{\Phi }\right] &=&\left[
d,R_{\Phi }dR_{\Phi }^{-1}-d\right] +\frac{1}{2}\left[ R_{\Phi }dR_{\Phi
}^{-1}-d,R_{\Phi }dR_{\Phi }^{-1}-d\right] \\
&=&\left[ d,R_{\Phi }dR_{\Phi }^{-1}\right] -\left[ d,R_{\Phi }dR_{\Phi
}^{-1}\right] =0.
\end{eqnarray*}

$ii)$ Let $D=\mathcal{L}_{\Phi }+\mathcal{I}_{\Psi }$, $\Phi \in \Lambda
^{1}M\otimes TM$,\ $\Psi \in \Lambda ^{2}M\otimes TM$. By using Lemma \ref%
{Lemme technique} and Lemma \ref{[Lfi,ICsi]} we have%
\begin{equation*}
\daleth D=\daleth \mathcal{I}_{\Psi }=\mathcal{L}_{\Psi }
\end{equation*}%
and 
\begin{eqnarray*}
\left[ D,D\right] &=&\left[ \mathcal{L}_{\Phi }+\mathcal{I}_{\Psi },\mathcal{%
L}_{\Phi }+\mathcal{I}_{\Psi }\right] =\mathcal{L}_{\left[ \Phi ,\Phi \right]
_{\mathcal{FN}}}+2\left[ \mathcal{L}_{\Phi },\mathcal{I}_{\Psi }\right] +%
\left[ \mathcal{I}_{\Psi },\mathcal{I}_{\Psi }\right] \\
&=&\mathcal{L}_{\left[ \Phi ,\Phi \right] _{\mathcal{FN}}}+2\left( \mathcal{I%
}_{\left[ \Phi ,\Psi \right] _{\mathcal{FN}}}+\mathcal{L}_{\mathcal{I}_{\Psi
}\Phi }\right) +\left[ \mathcal{I}_{\Psi },\mathcal{I}_{\Psi }\right] ,
\end{eqnarray*}%
so%
\begin{equation*}
\daleth D+\frac{1}{2}\left[ D,D\right] =\mathcal{L}_{\Psi }+\frac{1}{2}%
\mathcal{L}_{\left[ \Phi ,\Phi \right] _{\mathcal{FN}}}+\left( \mathcal{I}_{%
\left[ \Phi ,\Psi \right] _{\mathcal{FN}}}+\mathcal{L}_{\mathcal{I}_{\Psi
}\Phi }\right) +\frac{1}{2}\left[ \mathcal{I}_{\Psi },\mathcal{I}_{\Psi }%
\right] .
\end{equation*}%
It follows that%
\begin{equation*}
\mathfrak{L}\left( \daleth D+\frac{1}{2}\left[ D,D\right] \right) =\mathcal{L%
}_{\Psi }+\frac{1}{2}\mathcal{L}_{\left[ \Phi ,\Phi \right] _{\mathcal{FN}}}+%
\mathcal{L}_{\mathcal{I}_{\Psi }\Phi }
\end{equation*}%
and%
\begin{equation*}
\mathfrak{I}\left( \daleth D+\frac{1}{2}\left[ D,D\right] \right) =\mathcal{I%
}_{\left[ \Phi ,\Psi \right] _{\mathcal{FN}}}+\frac{1}{2}\left[ \mathcal{I}%
_{\Psi },\mathcal{I}_{\Psi }\right] .
\end{equation*}

Suppose that $D=\mathcal{L}_{\Phi }+\mathcal{I}_{\Psi }$ verifies the
Maurer-Cartan equation. Then%
\begin{equation*}
0=\mathcal{L}\left( \daleth D+\frac{1}{2}\left[ D,D\right] \right) =\mathcal{%
L}\left( \Psi +\frac{1}{2}\left[ \Phi ,\Phi \right] _{\mathcal{FN}}+\mathcal{%
I}_{\Psi }\Phi \right) .
\end{equation*}%
Since $\mathcal{L}$ is injective, this implies%
\begin{equation*}
\Psi +\frac{1}{2}\left[ \Phi ,\Phi \right] _{\mathcal{FN}}+\mathcal{I}_{\Psi
}\Phi =0.
\end{equation*}%
By Lemma \ref{I=produit} we obtain%
\begin{equation*}
\Psi +\frac{1}{2}\left[ \Phi ,\Phi \right] _{\mathcal{FN}}+\Phi \Psi =0,
\end{equation*}%
which is equivalent to%
\begin{equation*}
\Psi =-\frac{1}{2}\left( Id_{TM}+\Phi \right) ^{-1}\left[ \Phi ,\Phi \right]
_{\mathcal{FN}}=b\left( \Phi \right) .
\end{equation*}
\end{proof}

\begin{definition}
Let $\Phi \in \Lambda ^{1}M\otimes TM$ such that $Id_{T\left( M\right)
}+\Phi $ is invertible. $e_{\Phi }$ is called the canonical solution of
Maurer-Cartan equation associated to $\Phi $.
\end{definition}

\section{Canonical solutions of finite type of Maurer-Cartan equation}

\begin{theorem}
\label{gamma k}Let $\Phi \in \Lambda ^{1}M\otimes TM$ small enough such that 
$\sum_{h=0}^{\infty }\left( -1\right) ^{h}\Phi ^{h}=\left( Id_{T\left(
M\right) }+\Phi \right) ^{-1}\in \Lambda ^{1}M\otimes TM$ and $e_{\Phi }$
the canonical solution of Maurer-Cartan equation associated to $\Phi $. Then

a) $e_{\Phi }=\sum_{k=1}^{\infty }\gamma _{k}$, where $\gamma _{k}\in 
\mathcal{D}^{1}\left( M\right) $ are defined by induction as%
\begin{equation*}
\gamma _{1}=\mathcal{L}_{\Phi },\ \gamma _{k}=-\left( -1\right) ^{k}\frac{1}{%
2}\sum_{\left( p,q\right) \in \mathbb{N}^{\ast },\ p+q=k}\aleph \left( \left[
\gamma _{p},\gamma _{q}\right] \right) ,\ k\geqslant 2.
\end{equation*}%
b) 
\begin{equation*}
\gamma _{k}=\left( -1\right) ^{k+1}\frac{1}{2}\mathcal{I}_{\Phi ^{k-2}\left[
\Phi ,\Phi \right] _{\mathcal{FN}}},\ k\geqslant 2.
\end{equation*}%
c)%
\begin{equation*}
\mathcal{I}_{b\left( \Phi \right) }=\sum_{k=2}^{\infty }\gamma _{k}.
\end{equation*}
\end{theorem}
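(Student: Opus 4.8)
The plan is to grade everything by powers of $\Phi$. The smallness hypothesis makes $R_\Phi^{-1}=\sum_{h\ge 0}(-1)^h\Phi^h$ a convergent series in $\Lambda^1M\otimes TM$, so $e_\Phi$ decomposes canonically into $\Phi$-homogeneous components, which I will match term by term with the $\gamma_k$. I would first settle (b) together with the convergence of the series in (a). By Theorem~\ref{Calcul RFIdRFi-1}, $e_\Phi=\mathcal L_\Phi+\mathcal I_{b(\Phi)}$ with $b(\Phi)=-\tfrac12R_\Phi^{-1}[\Phi,\Phi]_{\mathcal{FN}}$; expanding $R_\Phi^{-1}$ and using linearity of $\Psi\mapsto\mathcal I_\Psi$ gives
\begin{equation*}
\mathcal I_{b(\Phi)}=-\tfrac12\sum_{h\ge 0}(-1)^h\,\mathcal I_{\Phi^{h}[\Phi,\Phi]_{\mathcal{FN}}}.
\end{equation*}
Put $\delta_1=\mathcal L_\Phi$ and, for $k\ge 2$, let $\delta_k$ be the term with $h=k-2$, i.e.\ $\delta_k=(-1)^{k+1}\tfrac12\,\mathcal I_{\Phi^{k-2}[\Phi,\Phi]_{\mathcal{FN}}}$. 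Then $e_\Phi=\sum_{k\ge1}\delta_k$, each $\delta_k\in\mathcal D^1(M)$, and $\delta_k\in\mathcal I(M)$ for $k\ge2$; this is the closed form claimed in (b), provided I identify $\delta_k=\gamma_k$.

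The heart of the proof is (a): the recursively defined $\gamma_k$ agree with these homogeneous pieces. Since $d$ does not involve $\Phi$, the operator $\daleth=[d,\cdot]$ preserves the $\Phi$-degree, and the bracket is bilinear, so the degree-$k$ component of the Maurer--Cartan equation $\daleth e_\Phi+\tfrac12[e_\Phi,e_\Phi]=0$ (which holds by Theorem~\ref{Solution MC}) is
\begin{equation*}
\daleth\delta_k+\tfrac12\sum_{p+q=k}[\delta_p,\delta_q]=0,\qquad k\ge 2 .
\end{equation*}
Now I apply $\aleph$ and use the homotopy identity $Id=\daleth\aleph+\aleph\daleth$ established in the Remark. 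Because $\aleph$ annihilates $\mathcal I(M)$, we have $\daleth\aleph\delta_k=0$ for $k\ge 2$, whence $\aleph\daleth\delta_k=\delta_k$; applying $\aleph$ to the displayed identity therefore expresses $\delta_k$ as $-\tfrac12\sum_{p+q=k}\aleph[\delta_p,\delta_q]$, which is the recursion defining $\gamma_k$ in (a). A strong induction on $k$ then gives $\gamma_k=\delta_k$: the base case is $\gamma_1=\mathcal L_\Phi=\delta_1$, and the inductive step substitutes $\gamma_j=\delta_j$ for $j<k$ into the recursion. Summing over $k$ yields $\sum_k\gamma_k=\sum_k\delta_k=e_\Phi$, proving (a), while the closed form of $\delta_k$ is (b). Part (c) is then immediate: $\sum_{k\ge 2}\gamma_k=\sum_{k\ge2}\delta_k=e_\Phi-\mathcal L_\Phi=\mathcal I_{b(\Phi)}$.

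I expect the delicate point to be the bookkeeping around $\aleph$: one must use essentially that $\aleph$ kills $\mathcal I(M)$ while acting as a one-sided inverse of $\daleth$ there, so that $\aleph\daleth\delta_k=\delta_k$ exactly, and one must correctly isolate the degree-$k$ part of the quadratic term as $\sum_{p+q=k}[\delta_p,\delta_q]$. Tracking the signs and degree conventions through equations~(\ref{[I,I]})--(\ref{[L,I]}) and the definition of $\aleph$ is the only genuinely error-prone step; the smallness hypothesis is what licenses the term-by-term expansion of $R_\Phi^{-1}$ and all rearrangements of the resulting series.
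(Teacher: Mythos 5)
Your route is genuinely different from the paper's and is, in substance, the better-organized one: the paper takes the recursion in (a) as the \emph{definition} of $\gamma_k$, observes that $\aleph$ kills $[\gamma_p,\gamma_q]$ for $p,q\geqslant 2$ so that the recursion collapses to $\gamma_r=\pm\aleph[\gamma_1,\gamma_{r-1}]$, and then computes the closed form (b) by induction using Lemma~\ref{[Lfi,ICsi]} and Lemma~\ref{I=produit}; parts (c) and (a) then follow by expanding $b(\Phi)$ and invoking Theorem~\ref{Calcul RFIdRFi-1}. You instead start from the expansion of $\mathcal{I}_{b(\Phi)}$ (so (b) and (c) are immediate) and \emph{derive} the recursion by isolating the $\Phi$-homogeneous components of the Maurer--Cartan equation and applying $\aleph$ via $Id=\daleth\aleph+\aleph\daleth$. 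This explains conceptually where the recursion comes from, but it has one genuine gap: the step from ``$\daleth e_{\Phi}+\frac12[e_{\Phi},e_{\Phi}]=0$'' to ``each degree-$k$ component vanishes separately'' is not automatic for a fixed $\Phi$; you must replace $\Phi$ by $t\Phi$ (legitimate since $R_{t\Phi}$ is invertible for small $t$ and each $\delta_k$ is homogeneous of degree $k$), obtain $\sum_k t^k\bigl(\daleth\delta_k+\frac12\sum_{p+q=k}[\delta_p,\delta_q]\bigr)=0$ for all small $t$, and identify coefficients. You assert the componentwise identity without this argument. Alternatively you could bypass Maurer--Cartan entirely and, as the paper does, verify $\delta_k=-\aleph[\delta_1,\delta_{k-1}]$ by direct computation with (\ref{[L,I]}) and Lemma~\ref{I=produit}.

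The second point you must address is the sign. What your argument yields is $\delta_k=-\frac12\sum_{p+q=k}\aleph[\delta_p,\delta_q]$, whereas the statement of (a) carries the prefactor $-(-1)^k\frac12$; these disagree for odd $k$. You cannot simply declare your identity to be ``the recursion defining $\gamma_k$ in (a).'' In fact, a direct check at $k=3$ shows that the recursion as printed is incompatible with (b) and (c): with $\gamma_1=\mathcal{L}_\Phi$ and $\gamma_2=-\frac12\mathcal{I}_{[\Phi,\Phi]_{\mathcal{FN}}}$ one computes $\aleph[\gamma_1,\gamma_2]=-\frac12\mathcal{I}_{\Phi[\Phi,\Phi]_{\mathcal{FN}}}$ (since $\aleph\mathcal{L}_{\Xi}=(-1)^{2}\mathcal{I}_{\Xi}=\mathcal{I}_{\Xi}$ for $\Xi\in\Lambda^2M\otimes TM$), so the printed recursion gives $\gamma_3=-\frac12\mathcal{I}_{\Phi[\Phi,\Phi]_{\mathcal{FN}}}$ while (b) and the expansion of $b(\Phi)$ both give $+\frac12\mathcal{I}_{\Phi[\Phi,\Phi]_{\mathcal{FN}}}$. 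Your version of the recursion is the one consistent with (b), (c) and with Theorem~\ref{Calcul RFIdRFi-1}; the factor $(-1)^k$ in the statement (and the compensating sign slip in the paper's own induction, where $(-1)^{\left\vert \mathcal{I}_{\Phi^{r-3}[\Phi,\Phi]_{\mathcal{FN}}}\Phi\right\vert}$ is evaluated as $-1$ although the degree is $2$) should be flagged explicitly rather than silently absorbed. With the homogeneity argument supplied and the sign discrepancy noted, your proof is correct.
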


\begin{proof}
We remark that for $r\geqslant 2$, $\gamma _{r}\in \mathcal{I}\left(
M\right) $, so by \ref{[I,I]} it follows that $\left[ \gamma _{p},\gamma _{q}%
\right] \in \mathcal{I}\left( M\right) $ for $p,q\geqslant 2$. Since $\aleph
\left\vert _{\mathcal{I}\left( M\right) }\right. =0$ we have $\aleph \left( %
\left[ \gamma _{p},\gamma _{q}\right] \right) =0$ for $p,q\geqslant 2$ and so%
\begin{equation}
\gamma _{r}=-\left( -1\right) ^{r}\frac{1}{2}\aleph \left( \left[ \gamma
_{1},\gamma _{r-1}\right] +\left[ \gamma _{1},\gamma _{r-1}\right] \right)
=-\left( -1\right) ^{r}\aleph \left[ \gamma _{1},\gamma _{r-1}\right] ,\
r\geqslant 2.  \label{Gama_r=Alef}
\end{equation}

We will show by induction that for every $r\geqslant 2$%
\begin{equation}
\gamma _{r}=-\left( -1\right) ^{r}\frac{1}{2}\mathcal{I}_{\Phi ^{r-2}\left[
\Phi ,\Phi \right] }.  \label{gama r}
\end{equation}%
Suppose that for every $r\geqslant 3$%
\begin{equation*}
\gamma _{r-1}=-\left( -1\right) ^{k}\frac{1}{2}\mathcal{I}_{\Phi ^{r-3}\left[
\Phi ,\Phi \right] }.
\end{equation*}%
Then%
\begin{equation}
\aleph \left[ \gamma _{1},\gamma _{r-1}\right] =\aleph \left[ \mathcal{L}%
_{\Phi },-\left( -1\right) ^{r-1}\frac{1}{2}\mathcal{I}_{\Phi ^{r-3}\left[
\Phi ,\Phi \right] }\right] ,  \label{alfa[gama1,gamar-1}
\end{equation}%
and by \ref{[L,I]} we have%
\begin{equation}
\left[ \mathcal{L}_{\Phi },\mathcal{I}_{\Phi ^{r-3}\left[ \Phi ,\Phi \right]
}\right] =\left[ \mathcal{I}_{\left[ \Phi ,\Phi ^{r-3}\left[ \Phi ,\Phi %
\right] _{\mathcal{FN}}\right] _{\mathcal{FN}}}-\left( -1\right)
^{\left\vert \Phi \right\vert \left( \left\vert \Phi ^{r-3}\left[ \Phi ,\Phi %
\right] \right\vert +1\right) }\mathcal{L}_{\mathcal{I}_{\Phi ^{r-3}\left[
\Phi ,\Phi \right] _{\mathcal{FN}}}\Phi }\right] .  \label{[Lfi,Ifir-3]}
\end{equation}%
So, from (\ref{alfa[gama1,gamar-1}) and (\ref{[Lfi,Ifir-3]}) we obtain 
\begin{eqnarray}
\aleph \left[ \gamma _{1},\gamma _{r-1}\right] &=&-\frac{1}{2}\left(
-1\right) ^{r-1}\aleph \left( \mathcal{L}_{\mathcal{I}_{\Phi ^{r-3}\left[
\Phi ,\Phi \right] _{\mathcal{FN}}}\Phi }\right)  \label{Alef=I} \\
&=&-\frac{1}{2}\left( -1\right) ^{r-1}\left( -1\right) ^{\left\vert _{%
\mathcal{I}_{\Phi ^{r-3}\left[ \Phi ,\Phi \right] _{\mathcal{FN}}}\Phi
}\right\vert }\mathcal{I}_{\mathcal{I}_{\Phi ^{r-3}\left[ \Phi ,\Phi \right]
_{\mathcal{FN}}}\Phi }  \notag \\
&=&-\frac{1}{2}\left( -1\right) ^{r}\mathcal{I}_{\mathcal{I}_{\Phi ^{r-3}%
\left[ \Phi ,\Phi \right] _{\mathcal{FN}}}\Phi }.  \notag
\end{eqnarray}%
But by Lemma \ref{I=produit} 
\begin{equation*}
\mathcal{I}_{\Phi ^{r-3}\left[ \Phi ,\Phi \right] _{\mathcal{FN}}}\Phi =\Phi
^{r-2}\left[ \Phi ,\Phi \right] _{\mathcal{FN}}
\end{equation*}%
and (\ref{gama r}) is verified.

It follows that%
\begin{equation*}
\mathcal{I}_{b\left( \Phi \right) }=\mathcal{I}_{-\frac{1}{2}%
\sum_{k=0}^{\infty }\left( -1\right) ^{k}\Phi ^{k}\left[ \Phi ,\Phi \right]
_{\mathcal{FN}}}=\sum_{k=0}^{\infty }-\left( -1\right) ^{k}\frac{1}{2}%
\mathcal{I}_{\Phi ^{k}\left[ \Phi ,\Phi \right] _{\mathcal{FN}%
}}=\sum_{k=2}^{\infty }\gamma _{k}.
\end{equation*}

By Theorem \ref{efi=Lfi+Ibfi}%
\begin{equation*}
e_{\Phi }=\mathcal{L}_{\Phi }+\mathcal{I}_{b\left( \Phi \right)
}=\sum_{k=1}^{\infty }\gamma _{k}
\end{equation*}%
and the Proposition is proved.
\end{proof}

\begin{definition}
Let $\Phi \in \Lambda ^{1}M\otimes TM$ small enough such that $%
\sum_{h=0}^{\infty }\left( -1\right) ^{h}\Phi ^{h}=\left( Id_{T\left(
M\right) }+\Phi \right) ^{-1}\in \Lambda ^{1}M\otimes TM$ and $e_{\Phi }$
the canonical solution of Maurer-Cartan equation associated to $\Phi $. $%
e_{\Phi }$ is called of finite type if there exists $r\in \mathbb{N}$ if $%
\Phi ^{r}\left[ \Phi ,\Phi \right] _{\mathcal{FN}}=0$ and of finite type $r$
if $r=\min \left\{ s\in \mathbb{N}:\ \Phi ^{s}\left[ \Phi ,\Phi \right] _{%
\mathcal{FN}}=0\right\} $.
\end{definition}

\begin{remark}
\label{dfi=somme finie} Let $e_{\Phi }$ the canonical solution of
Maurer-Cartan equation corresponding to $\Phi \in \Lambda ^{1}M\otimes TM$.
Suppose that $e_{\Phi }$ is of finite type $r$. Then 
\begin{equation*}
e_{\Phi }=\sum_{k=1}^{r+1}\gamma _{k}.
\end{equation*}
\end{remark}

\begin{proposition}
\label{Finite type 0}Let $\Phi \in \Lambda ^{1}M\otimes TM$ such that $%
R_{\Phi }$ is invertible. The following are equivalent:

$i)$ The canonical solution $e_{\Phi }$ of Maurer-Cartan equation
coresponding to $\Phi $ is of finite type $0$.

$ii)$ $e_{\Phi }$ is $\daleth $-closed.

$iii)$ $d_{\Phi }$ is $\daleth $-closed.

$iv)$ $N_{\Phi }=0$.
\end{proposition}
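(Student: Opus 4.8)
The plan is to collapse all four statements onto the single condition $[\Phi,\Phi]_{\mathcal{FN}}=0$, and then read off $N_\Phi=0$ from it. First I would dispose of the equivalence $(i)\Leftrightarrow(iv)$, which is essentially a matter of unwinding definitions. By the definition of finite type, $e_\Phi$ is of type $0$ precisely when $\Phi^{0}[\Phi,\Phi]_{\mathcal{FN}}=[\Phi,\Phi]_{\mathcal{FN}}=0$, since $0$ is the minimum of $\mathbb{N}$ and $\Phi^{0}$ acts as the identity on $TM$-valued forms by Definition \ref{action}. On the other hand, Proposition \ref{Calcul [,]FN} gives $[\Phi,\Phi]_{\mathcal{FN}}(X,Y)=2N_\Phi(X,Y)$, so the vanishing of the bracket is the same as $N_\Phi=0$. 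This settles $(i)\Leftrightarrow(iv)$.

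The core of the argument is the computation of $\daleth e_\Phi$. Starting from $e_\Phi=\mathcal{L}_\Phi+\mathcal{I}_{b(\Phi)}$ (Theorem \ref{Calcul RFIdRFi-1}) and applying $\daleth$, the first summand dies because $\mathcal{L}(M)=\ker\daleth$, so $\daleth\mathcal{L}_\Phi=0$. For the second summand I would use \eqref{[Ifi,d]=Lfi} with $b(\Phi)\in\Lambda^{2}M\otimes TM$: since $\daleth(-1)^{|b(\Phi)|}\mathcal{I}_{b(\Phi)}=\mathcal{L}_{b(\Phi)}$ and $|b(\Phi)|=2$, we get $\daleth\mathcal{I}_{b(\Phi)}=\mathcal{L}_{b(\Phi)}$. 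Hence $\daleth e_\Phi=\mathcal{L}_{b(\Phi)}$. Now invoke the injectivity of the morphism $\mathcal{L}$ together with the invertibility of $R_\Phi$: because $b(\Phi)=-\tfrac12 R_\Phi^{-1}[\Phi,\Phi]_{\mathcal{FN}}$ and $R_\Phi^{-1}$ is a pointwise isomorphism of $TM$, one has the chain $\daleth e_\Phi=0 \iff \mathcal{L}_{b(\Phi)}=0 \iff b(\Phi)=0 \iff [\Phi,\Phi]_{\mathcal{FN}}=0 \iff N_\Phi=0$. This yields $(ii)\Leftrightarrow(iv)$.

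The equivalence $(ii)\Leftrightarrow(iii)$ is then immediate, since $\daleth d=[d,d]=2d^{2}=0$, whence $\daleth d_\Phi=\daleth(d+e_\Phi)=\daleth e_\Phi$. Assembling $(i)\Leftrightarrow(iv)$, $(ii)\Leftrightarrow(iv)$, and $(ii)\Leftrightarrow(iii)$ closes the cycle of all four conditions.

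There is no genuinely hard step here; the proposition is a bookkeeping consequence of the decomposition $\mathcal{D}^{\ast}(M)=\mathcal{L}(M)\oplus\mathcal{I}(M)$ and the formula for $b(\Phi)$. The two points that need care are the sign in \eqref{[Ifi,d]=Lfi} (which is harmless since $b(\Phi)$ has even form degree $2$) and the observation that the prefactor $R_\Phi^{-1}$ in $b(\Phi)$, being invertible, neither creates nor destroys zeros, so that $b(\Phi)=0$ is genuinely equivalent to $[\Phi,\Phi]_{\mathcal{FN}}=0$. I note that one could alternatively obtain $\daleth e_\Phi$ from the Maurer--Cartan identity $\daleth e_\Phi=-\tfrac12[e_\Phi,e_\Phi]$ of Theorem \ref{Solution MC}, but the direct route through \eqref{[Ifi,d]=Lfi} is shorter and avoids expanding the bracket via Lemma \ref{[Lfi,ICsi]}.
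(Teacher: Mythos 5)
Your proof is correct and follows essentially the same route as the paper: both arguments rest on the decomposition $e_{\Phi }=\mathcal{L}_{\Phi }+\mathcal{I}_{b\left( \Phi \right) }$, the fact that $\mathcal{L}\left( M\right) =\ker \daleth $ together with $\daleth \mathcal{I}_{b\left( \Phi \right) }=\pm \mathcal{L}_{b\left( \Phi \right) }$, the invertibility of $R_{\Phi }$ in the formula for $b\left( \Phi \right) $, and the identity $\left[ \Phi ,\Phi \right] _{\mathcal{FN}}=2N_{\Phi }$. The only cosmetic difference is that you compute $\daleth e_{\Phi }=\mathcal{L}_{b\left( \Phi \right) }$ explicitly and invoke injectivity of $\mathcal{L}$, where the paper instead argues that $\daleth e_{\Phi }=0$ forces $e_{\Phi }\in \mathcal{L}\left( M\right) $ and hence kills the $\mathcal{I}$-component.
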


\begin{proof}
$i)\iff ii)$ Suppose that the canonical solution $e_{\Phi }$ of
Maurer-Cartan equation coresponding to $\Phi $ is of finite type $0$. Then
by Remark \ref{dfi=somme finie} it follows that

\begin{equation*}
e_{\Phi }=\mathcal{\gamma }_{1}=\mathcal{L}_{\Phi }
\end{equation*}%
and by Lemma \ref{daled ferme} it follows that $e_{\Phi }$ is $\daleth $%
-closed.

Conversely, suppose $\daleth e_{\Phi }=0$. By using again Lemma \ref{daled
ferme} it follows that $e_{\Phi }\in \mathcal{L}\left( M\right) $. In
particular $\mathcal{I}_{b\left( \Phi \right) }=0$, so $\left[ \Phi ,\Phi %
\right] _{\mathcal{FN}}=0$.

$ii)\iff iii)$ We have $d=\mathcal{L}_{Id_{T\left( M\right) }}$, so $\daleth
d=0$. Since $e_{\Phi }=$ $d_{\Phi }-d$ the assertion follows.

$i)\iff iv)$ By Proposition \ref{Calcul [,]FN}, $\left[ \Phi ,\Phi \right] _{%
\mathcal{FN}}=2N_{\Phi }$ so $N_{\Phi }=0$ if and only if $\left[ \Phi ,\Phi %
\right] _{\mathcal{FN}}=0$.
\end{proof}

By using Proposition \ref{Finite type 0} we obtain:

\begin{corollary}
Let $M$ be a smooth manifold and $J$ an almost complex structure on $M$.
Then the canonical solution associated to $J$ is of finite type $0$ if and
only if $N_{J}=0$, i.e. if and only if $J$ is integrable.
\end{corollary}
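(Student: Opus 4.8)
The plan is to derive the corollary directly from Proposition~\ref{Finite type 0} by specializing $\Phi$ to an almost complex structure $J$ and then connecting the vanishing of the Nijenhuis tensor to integrability. First I would verify that $J$ is an admissible choice of $\Phi$, namely that $R_J = Id_{T(M)} + J$ is invertible. Since $J^2 = -Id$, the endomorphism $Id + J$ satisfies $(Id+J)(Id-J) = Id - J^2 = 2\,Id$, so $R_J^{-1} = \tfrac{1}{2}(Id_{T(M)} - J)$ exists pointwise. Thus the canonical solution $e_J$ associated to $J$ is well defined and Proposition~\ref{Finite type 0} applies to $\Phi = J$.

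With admissibility settled, the core of the argument is a two-step equivalence. The first step invokes the equivalence $i) \iff iv)$ of Proposition~\ref{Finite type 0}: the canonical solution $e_J$ is of finite type $0$ if and only if $N_J = 0$. This is immediate once $R_J$ is known to be invertible, and it handles the left half of the biconditional in the corollary. The second step is the classical fact that for an almost complex structure, $N_J = 0$ is equivalent to integrability of $J$. Depending on how much one wishes to assume, this is either the Newlander--Nirenberg theorem (giving a genuine integrable complex structure from a vanishing Nijenhuis tensor in the smooth category) or, if one takes \emph{integrable} to mean formal involutivity of the $(1,0)$-distribution, a direct consequence of the definition of $N_J$ combined with Proposition~\ref{Calcul [,]FN}.

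The key computation supporting the second step is the identity $[\Phi,\Phi]_{\mathcal{FN}} = 2N_\Phi$ from Proposition~\ref{Calcul [,]FN}, which shows that $N_J$ vanishes precisely when the Fr\"olicher--Nijenhuis bracket $[J,J]_{\mathcal{FN}}$ vanishes; geometrically this is the obstruction to the involutivity of the eigenbundle decomposition of $TM \otimes \mathbb{C}$ into the $\pm i$ eigenspaces of $J$. Chaining the two equivalences gives $e_J$ of finite type $0 \iff N_J = 0 \iff J$ integrable, which is exactly the assertion.

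The main obstacle is not in the algebra, which is entirely supplied by the preceding results, but in deciding the precise meaning of \emph{integrable} and citing the appropriate bridge. If integrability is understood analytically as the existence of holomorphic coordinates, the nontrivial input is Newlander--Nirenberg, which must be invoked as an external theorem; the paper's own machinery only delivers the vanishing of $N_J$, not the holomorphic charts. If integrability is taken in the weaker involutive sense, then the proof is self-contained and the only care needed is to confirm that $R_J$ is invertible so that the hypotheses of Proposition~\ref{Finite type 0} are genuinely met for $\Phi = J$.
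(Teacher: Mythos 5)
Your proof is correct and follows essentially the same route as the paper, which simply derives the corollary from Proposition~\ref{Finite type 0} (the equivalence $i)\iff iv)$) together with the classical identification of $N_J=0$ with integrability. Your explicit verification that $R_J=Id+J$ is invertible via $(Id+J)(Id-J)=2\,Id$ is a worthwhile addition that the paper leaves implicit, and your remark that the analytic notion of integrability requires Newlander--Nirenberg as external input is accurate.
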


\begin{theorem}
\label{Type 0 and 1}Let $M$ be a smooth manifold and $\xi \subset TM$ a
distribution. Let $\zeta \subset TM$ such that $TM=\xi \oplus \zeta $ and
consider $\Phi \in End\left( TM\right) $ defined by $\Phi =0$ on $\xi $ and $%
\Phi =Id$ on $\mathbb{\zeta }$. Then:

\begin{enumerate}
\item The canonical solution associated to $\Phi $ is of finite type $0$ if
and only if $\xi $ and $\zeta $ are integrable.

\item The canonical solution associated to $\Phi $ is of finite type $1$ if
and only if $\xi $ is integrable and $\zeta $ is not integrable.

\item If $\xi $ is not integrable then $\Phi ^{k}\left[ \Phi ,\Phi \right] _{%
\mathcal{FN}}\neq 0$ for every $k\in \mathbb{N}$.
\end{enumerate}
\end{theorem}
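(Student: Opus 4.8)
The plan is to compute the Fr\"olicher-Nijenhuis bracket $\left[\Phi,\Phi\right]_{\mathcal{FN}}$ explicitly in terms of the Lie bracket structure adapted to the splitting $TM=\xi\oplus\zeta$, and then to understand how the powers $\Phi^{k}$ act on the resulting $TM$-valued $2$-form. The key observation is that $\Phi$ is the projection onto $\zeta$ along $\xi$: it is idempotent, $\Phi^{2}=\Phi$, so $N_{\Phi}$ simplifies considerably. First I would use Proposition \ref{Calcul [,]FN} to write, for $X,Y\in\mathfrak{X}(M)$,
\begin{equation*}
\tfrac{1}{2}\left[\Phi,\Phi\right]_{\mathcal{FN}}(X,Y)=N_{\Phi}(X,Y)=\left[\Phi X,\Phi Y\right]+\Phi\left[X,Y\right]-\Phi\left[\Phi X,Y\right]-\Phi\left[X,\Phi Y\right],
\end{equation*}
using $\Phi^{2}=\Phi$. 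I would then evaluate this on vectors taken from $\xi$ and $\zeta$ separately. Writing the $\xi$-component (i.e.\ applying $Id-\Phi$) and the $\zeta$-component (applying $\Phi$) of $N_{\Phi}$, integrability of $\xi$ should correspond exactly to the vanishing of $\Phi N_{\Phi}$ (equivalently $N_{\Phi}$ landing in $\xi$) on pairs from $\xi$, and integrability of $\zeta$ to the vanishing of the $\xi$-component of $N_{\Phi}$ on pairs from $\zeta$.

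For part (1), the equivalence with finite type $0$ is already available: by Proposition \ref{Finite type 0}, $e_{\Phi}$ is of finite type $0$ iff $N_{\Phi}=0$. So I would just show $N_{\Phi}=0\iff$ both $\xi$ and $\zeta$ are integrable. Plugging $X,Y\in\xi$ into the formula for $N_{\Phi}$ gives $N_{\Phi}(X,Y)=\Phi\left[X,Y\right]$ (all other terms drop since $\Phi X=\Phi Y=0$), so this component vanishes iff $\left[X,Y\right]\in\ker\Phi=\xi$, i.e.\ iff $\xi$ is integrable. Plugging $X,Y\in\zeta$ gives $N_{\Phi}(X,Y)=\left[X,Y\right]+\Phi\left[X,Y\right]-\Phi\left[X,Y\right]-\Phi\left[X,Y\right]=\left[X,Y\right]-\Phi\left[X,Y\right]=(Id-\Phi)\left[X,Y\right]$, which is the $\xi$-component of $\left[X,Y\right]$; this vanishes iff $\left[X,Y\right]\in\zeta$, i.e.\ iff $\zeta$ is integrable. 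The mixed case $X\in\xi,Y\in\zeta$ must also be checked and should reduce to a consequence of the two pure cases (or vanish identically after using bilinearity and the tensoriality of $N_{\Phi}$), so that $N_{\Phi}=0$ overall iff both distributions are integrable.

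For parts (2) and (3), the heart of the matter is to understand $\Phi^{k}N_{\Phi}$, i.e.\ the iterated action of $\Phi$ on the $TM$-valued form $\left[\Phi,\Phi\right]_{\mathcal{FN}}$ in the sense of Definition \ref{action}(b), namely postcomposition of the vector-valued output by $\Phi$. Since $\Phi$ is the projection onto $\zeta$, $\Phi^{k}N_{\Phi}=\Phi N_{\Phi}$ for all $k\geqslant1$, so $\Phi N_{\Phi}$ captures the $\zeta$-component of $N_{\Phi}$; I expect $\Phi N_{\Phi}=0$ to be equivalent to integrability of $\xi$. Granting $\xi$ integrable (so $\Phi N_{\Phi}=0$, giving finite type $\leqslant1$) and $\zeta$ \emph{not} integrable (so $N_{\Phi}\neq0$, excluding type $0$), finite type exactly $1$ follows. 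For (3), if $\xi$ is not integrable then $\Phi N_{\Phi}\neq0$, and the idempotency $\Phi^{k}N_{\Phi}=\Phi N_{\Phi}$ shows $\Phi^{k}\left[\Phi,\Phi\right]_{\mathcal{FN}}\neq0$ for every $k$. The main obstacle I anticipate is the bookkeeping in identifying $\Phi^{k}N_{\Phi}$ with $\Phi N_{\Phi}$: one must verify carefully that the relevant $\Phi$-action here is genuinely postcomposition on the output vector (matching Definition \ref{action}(b) and Lemma \ref{I=produit}), so that idempotency of $\Phi$ as an endomorphism collapses all higher powers, and then confirm that $\Phi N_{\Phi}$ vanishes precisely on the obstruction to integrability of $\xi$ rather than of $\zeta$.
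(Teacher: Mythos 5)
Your proposal is correct and follows essentially the same route as the paper: both exploit the idempotency $\Phi^{k}=\Phi$ for $k\geqslant 1$ and evaluate $N_{\Phi}$ (equivalently $\tfrac{1}{2}\left[ \Phi ,\Phi \right]_{\mathcal{FN}}$) on pairs of vectors drawn from $\xi$ and from $\zeta$, finding $\Phi\left[ X,Y\right]$ in the first case and $\left( Id-\Phi\right)\left[ X,Y\right]$ in the second (the mixed case does indeed vanish identically, as you anticipated). Your organization around the two equivalences $N_{\Phi}=0\iff$ both distributions integrable and $\Phi N_{\Phi}=0\iff\xi$ integrable is a slightly tidier packaging of the same computation the paper carries out case by case.
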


\begin{proof}
Let $Y,Z\in \xi $. Since $\Phi ^{k}=\Phi $ for every $k\geqslant 1$ and $%
\Phi Y=\Phi Z=0$,%
\begin{eqnarray}
\left( \Phi \left[ \Phi ,\Phi \right] _{\mathcal{FN}}\right) \left(
Y,Z\right) &=&\Phi \left( \left[ \Phi ,\Phi \right] _{\mathcal{FN}}\left(
Y,Z\right) \right)  \notag \\
&=&\Phi \left( \left[ \Phi Y,\Phi Z\right] +\Phi ^{2}\left[ Y,Z\right] -\Phi %
\left[ \Phi Y,Z\right] -\Phi \left[ Y,\Phi Z\right] \right) =\Phi \left( %
\left[ Y,Z\right] \right) .  \label{Fi[Fi,Fi](Y,Z)}
\end{eqnarray}%
Suppose that the canonical solution associated to $\Phi $ is of finite type $%
\leqslant 1$. Then $\Phi \left[ \Phi ,\Phi \right] _{\mathcal{FN}}\left(
Y,Z\right) =0$ for every $Y,Z\in \xi $ and by (\ref{Fi[Fi,Fi](Y,Z)} it
follows that $\left[ Y,Z\right] \in \xi $. Therefore $\xi $ is integrable by
the theorem of Frobenius.

Suppose now that $\xi $ is not integrable. There exist $Y,Z\in \xi $ such
that $\left[ Y,Z\right] \notin \xi $. By (\ref{Fi[Fi,Fi](Y,Z)}) we obtain%
\begin{eqnarray*}
\left( \Phi ^{k}\left[ \Phi ,\Phi \right] _{\mathcal{FN}}\right) \left(
Y,Z\right) &=&\Phi ^{k}\left( \left[ \Phi Y,\Phi Z\right] +\Phi ^{k+2}\left[
Y,Z\right] -\Phi ^{k+1}\left[ \Phi Y,Z\right] -\Phi ^{k+1}\left[ Y,\Phi Z%
\right] \right) \\
&=&\Phi \left( \left[ Y,Z\right] \right) \neq 0
\end{eqnarray*}%
for every $k\geqslant 1$.

Conversely, suppose that $\xi $ is integrable. Then for every $V,W\in \xi $,
we have $\left[ V,W\right] \in \xi $, so $\Phi \left( \left[ V,W\right]
\right) =0$. Since $\Phi \left( TM\right) \subset \mathbb{\zeta }$, for
every $V\in TM$, there exist unique $V_{\xi }\in \xi $ and $V_{\zeta }\in
\zeta $ such that $V=V_{\xi }+V_{\zeta }$, and $\Phi V=V_{\zeta }$. Since $%
\Phi ^{k}=\Phi $ for every $k\geqslant 1$,%
\begin{eqnarray*}
\left( \Phi \left[ \Phi ,\Phi \right] _{\mathcal{FN}}\right) \left(
V,W\right) &=&\Phi \left( \left[ \Phi ,\Phi \right] _{\mathcal{FN}}\left(
V,W\right) \right) \\
&=&\Phi \left( \left[ \Phi V,\Phi W\right] +\Phi ^{2}\left[ V,W\right] -\Phi %
\left[ \Phi V,W\right] -\Phi \left[ V,\Phi W\right] \right) \\
&=&\Phi \left( \left[ V_{\zeta },W_{\zeta }\right] \right) +\Phi \left( %
\left[ V_{\xi }+V_{\zeta },W_{\xi }+W_{\zeta }\right] \right) \\
&&-\Phi \left( \left[ V_{\zeta },W_{\xi }+W_{\zeta }\right] \right) -\Phi
\left( \left[ V_{\xi }+V_{\zeta },W_{\zeta }\right] \right) \\
&=&\Phi \left[ V_{\zeta },W_{\zeta }\right] +\Phi \left[ V_{\zeta },W_{\xi }%
\right] +\Phi \left[ V_{\xi },W_{\zeta }\right] +\Phi \left[ V_{\zeta
},W_{\zeta }\right] \\
&&-\Phi \left[ V_{\zeta },W_{\xi }\right] -\Phi \left[ V_{\zeta },W_{\zeta }%
\right] -\Phi \left[ V_{\xi },W_{\zeta }\right] -\Phi \left[ V_{\zeta
},W_{\zeta }\right] \\
&=&0
\end{eqnarray*}%
and it follows that the canonical solution associated to $\Phi $ is of
finite type $\leqslant 1$.

If $\zeta $ is integrable too, $\Phi \left[ V_{\zeta },W_{\zeta }\right] =%
\left[ V_{\zeta },W_{\zeta }\right] $ for every $V,W\in TM$ and%
\begin{eqnarray*}
\left( \left[ \Phi ,\Phi \right] _{\mathcal{FN}}\right) \left( V,W\right) &=&%
\left[ \Phi V,\Phi W\right] +\Phi ^{2}\left[ V,W\right] -\Phi \left[ \Phi V,W%
\right] -\Phi \left[ V,\Phi W\right] \\
&=&\left[ V_{\zeta },W_{\zeta }\right] +\Phi \left[ V_{\xi }+V_{\zeta
},W_{\xi }+W_{\zeta }\right] \\
&&-\Phi \left[ V_{\zeta },W_{\xi }+W_{\zeta }\right] -\Phi \left[ V_{\xi
}+V_{\zeta },W_{\zeta }\right] \\
&=&\left[ V_{\zeta },W_{\zeta }\right] +\Phi \left[ V_{\zeta },W_{\xi }%
\right] +\Phi \left[ V_{\xi },W_{\zeta }\right] -\Phi \left[ V_{\zeta
},W_{\xi }\right] -\Phi \left[ V_{\zeta },W_{\zeta }\right] =0,
\end{eqnarray*}%
so the canonical solution associated to $\Phi $ is of finite type $0$.

If $\xi $ is integrable and $\zeta $ is not integrable, there exists $Y,Z\in
\zeta $ such that $\left[ Y,Z\right] \notin \zeta $, so 
\begin{eqnarray*}
\left( \left[ \Phi ,\Phi \right] _{\mathcal{FN}}\right) \left[ Y,Z\right] &=&%
\left[ \Phi Y,\Phi Z\right] +\Phi ^{2}\left[ Y,Z\right] -\Phi \left[ \Phi Y,Z%
\right] -\Phi \left[ Y,\Phi Z\right] \\
&=&\left[ Y,Z\right] +\Phi \left[ Y,Z\right] -\Phi \left[ Y,Z\right] -\Phi %
\left[ Y,Z\right] =\left[ Y,Z\right] -\Phi \left[ Y,Z\right] \neq 0
\end{eqnarray*}%
and the Theorem follows.
\end{proof}

\begin{corollary}
\label{Type 0}Let $M$ be a smooth manifold and $\xi \subset TM$ a
co-orientable distribution of codimension $1$. There exist $X\in \mathfrak{X}%
\left( M\right) $ and $\gamma \in \Lambda ^{1}\left( M\right) $ such $\xi
=\ker \gamma $ and $\iota _{X}\gamma =1$. We have $T\left( M\right) =\xi
\oplus \mathbb{R}\left[ X\right] $ and we consider $\Phi \in End\left(
TM\right) $ defined by $\Phi =0$ on $\xi $ and $\Phi =Id$ on $\mathbb{R}%
\left[ X\right] $. Then the canonical solution associated to $\Phi $ is of
finite type $0$ if and only if $\xi $ is integrable.
\end{corollary}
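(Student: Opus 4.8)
The plan is to recognize this statement as the special case of Theorem \ref{Type 0 and 1} in which the chosen complement $\zeta$ is one-dimensional, and then to exploit the elementary fact that every rank-one distribution is automatically integrable.

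First I would record the setup so as to match the hypotheses of Theorem \ref{Type 0 and 1}. Co-orientability of $\xi$ means precisely that there is a nowhere-vanishing $\gamma \in \Lambda^{1}(M)$ with $\xi = \ker \gamma$; since $\gamma$ never vanishes, a standard partition-of-unity argument produces $X \in \mathfrak{X}(M)$ with $\iota_{X}\gamma = \gamma(X) = 1$, whence $T(M) = \ker \gamma \oplus \mathbb{R}[X] = \xi \oplus \mathbb{R}[X]$. Thus the hypotheses of Theorem \ref{Type 0 and 1} are fulfilled with $\zeta = \mathbb{R}[X]$ and with $\Phi \in End(TM)$ the endomorphism that is $0$ on $\xi$ and the identity on $\zeta$.

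Next I would invoke part (1) of Theorem \ref{Type 0 and 1}: the canonical solution $e_{\Phi}$ is of finite type $0$ if and only if both $\xi$ and $\zeta$ are integrable. The only point that genuinely remains to be checked is that $\zeta = \mathbb{R}[X]$ is integrable. But any two sections of the line bundle $\zeta$ have the form $V = fX$, $W = gX$ with $f,g \in C^{\infty}(M)$, and
\begin{equation*}
[V,W] = [fX, gX] = \bigl(f\,(Xg) - g\,(Xf)\bigr)X \in \zeta,
\end{equation*}
so $\zeta$ is involutive and hence integrable by Frobenius. Therefore the criterion ``both $\xi$ and $\zeta$ integrable'' collapses to ``$\xi$ integrable'', which is exactly the asserted equivalence.

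Since the only substantive ingredient beyond Theorem \ref{Type 0 and 1} is the automatic integrability of a rank-one distribution, I do not expect any real obstacle here: the corollary is a direct specialization of the theorem, the sole nontrivial remark being that a one-dimensional complement is always integrable.
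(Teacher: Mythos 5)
Your proposal is correct and follows exactly the paper's route: the paper's proof is the one-line observation that one applies Theorem \ref{Type 0 and 1} with the complement $\mathbb{R}\left[ X\right]$, ``which is obviously integrable.'' Your only addition is to spell out the elementary computation $\left[ fX,gX\right] =\left( f\left( Xg\right) -g\left( Xf\right) \right) X$ showing a rank-one distribution is involutive, which the paper leaves implicit.
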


\begin{proof}
We apply Proposition \ref{Type 0 and 1} for $\eta =\mathbb{R}\left[ X\right] 
$ which is obviously integrable.
\end{proof}

\begin{theorem}
\label{Type r>1}Let $M$ be a smooth manifold of dimension $n$ and $\xi ,\tau
\subset TM$ distributions such that $\xi \subsetneqq \tau $. We consider $%
\eta ,\zeta \subset TM$ distributions such that $\tau =\xi \oplus \eta $ and 
$TM=\tau \oplus \zeta $ and let $A:\eta \rightarrow \xi $ , $B:\eta
\rightarrow \eta $ such that $\xi =\ker K$, where $K:\tau \rightarrow \tau $
is defined by $K=0$ on $\xi $ and $K=A+B$ on $\eta $. We suppose that there
exists a natural number $m\geqslant 1$ such that $K^{m}=0$. Let $\Phi \in
End\left( TM\right) $ defined by $\Phi =K$ on $\tau $ and $\Phi =Id$ on $%
\zeta $. The following are equivalent:

\begin{enumerate}
\item $\tau $ is integrable.

\item The canonical solution associated to $\Phi $ is of finite type $%
\leqslant m$.
\end{enumerate}
\end{theorem}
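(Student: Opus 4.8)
The plan is to reduce the statement to a Frobenius integrability condition for $\tau$ by computing the tensor $\Phi^m[\Phi,\Phi]_{\mathcal{FN}}$ explicitly. By Proposition \ref{Calcul [,]FN} one has $[\Phi,\Phi]_{\mathcal{FN}}=2N_\Phi$, and since $\Phi^{s}[\Phi,\Phi]_{\mathcal{FN}}=0$ implies $\Phi^{s+1}[\Phi,\Phi]_{\mathcal{FN}}=0$, the canonical solution $e_\Phi$ is of finite type $\leqslant m$ if and only if $\Phi^m N_\Phi=0$. The decisive preliminary observation is that $\Phi^m$ equals the projection $\pi_\zeta\colon TM=\tau\oplus\zeta\to\zeta$ with kernel $\tau$: writing $V=V_\tau+V_\zeta$ with $V_\tau\in\tau,\ V_\zeta\in\zeta$, the invariance $K(\tau)\subset\tau$ gives $\Phi^k V=K^kV_\tau+V_\zeta$ for every $k\geqslant 1$, so $K^m=0$ yields $\Phi^m V=V_\zeta=\pi_\zeta V$, whence $\Phi^{m+j}=\pi_\zeta$ for all $j\geqslant 0$.

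Next I would apply $\Phi^m$ to the formula $N_\Phi(V,W)=[\Phi V,\Phi W]+\Phi^2[V,W]-\Phi[\Phi V,W]-\Phi[V,\Phi W]$ and use $\Phi^m=\Phi^{m+1}=\Phi^{m+2}=\pi_\zeta$ to get
\begin{equation*}
\Phi^m N_\Phi(V,W)=\pi_\zeta\bigl([\Phi V,\Phi W]+[V,W]-[\Phi V,W]-[V,\Phi W]\bigr).
\end{equation*}
Substituting $\Phi V=KV_\tau+V_\zeta$ and $\Phi W=KW_\tau+W_\zeta$ and expanding each bracket by $\mathbb{R}$-bilinearity, every bracket having at least one $\zeta$-entry cancels in pairs, and the surviving four $\tau$-terms reassemble as
\begin{equation*}
\Phi^m N_\Phi(V,W)=\pi_\zeta\,[\,(K-Id)V_\tau,(K-Id)W_\tau\,].
\end{equation*}
Since $KV_\tau,V_\tau\in\tau$, the field $(K-Id)V_\tau$ is again a section of $\tau$; this manipulation is legitimate because $N_\Phi$ is tensorial, so it is enough to evaluate on vector fields whose $\tau$- and $\zeta$-components are smooth.

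Finally, nilpotency of $K$ makes $Id-K$ invertible on $\tau$, with inverse $\sum_{j=0}^{m-1}K^j$, so $K-Id$ restricts to a bundle automorphism of $\tau$ and $\{(K-Id)V_\tau:V_\tau\in\Gamma(\tau)\}=\Gamma(\tau)$. Hence the last identity shows that $\Phi^m N_\Phi=0$ if and only if $\pi_\zeta[S,T]=0$, i.e. $[S,T]\in\Gamma(\tau)$, for all $S,T\in\Gamma(\tau)$, which by the theorem of Frobenius is precisely the integrability of $\tau$; together with the reduction of the first paragraph this yields the equivalence. I expect the main obstacle to be purely the bookkeeping of the cancellation of the mixed brackets; the conceptual core, that $\Phi^m=\pi_\zeta$ kills every $\zeta$-term and that the invertibility of $K-Id$ turns the residual condition into integrability of $\tau$ (and not merely of $\xi$), is what makes the argument work.
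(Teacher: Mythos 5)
Your proposal is correct, and every step checks out: the reduction of ``finite type $\leqslant m$'' to $\Phi ^{m}N_{\Phi }=0$, the identification $\Phi ^{m}=\pi _{\zeta }$ (which follows from $\Phi ^{k}V=K^{k}V_{\tau }+V_{\zeta }$ and $K^{m}=0$), the cancellation of all brackets containing a $\zeta $-entry, and the factorization
\begin{equation*}
\Phi ^{m}N_{\Phi }\left( V,W\right) =\pi _{\zeta }\left[ \left( K-Id\right)
V_{\tau },\left( K-Id\right) W_{\tau }\right]
\end{equation*}
all hold (I verified the sixteen-term expansion; the four surviving $\tau $-terms do reassemble into that single bracket). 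The route is, however, organized quite differently from the paper's. The paper starts from the same identity $\Phi ^{m}\left[ \Phi ,\Phi \right] _{\mathcal{FN}}\left( Y,Z\right) =\Phi ^{m}\left[ \Phi Y,\Phi Z\right] +\Phi ^{m}\left[ Y,Z\right] -\Phi ^{m}\left[ \Phi Y,Z\right] -\Phi ^{m}\left[ Y,\Phi Z\right] $, but then proves the forward implication by a term-by-term cancellation using $\Phi Y=C_{\tau }+Y_{\zeta }$, and the converse by a three-step case analysis ($Y,Z\in \xi $; $Y\in \xi ,Z\in \tau $; $Y,Z\in \tau $), at each stage invoking the surjectivity of $Id-B$ on $\eta $ (via $\det \left( Id-B\right) =1$) to upgrade $\left[ Y,\left( Id-\Phi \right) Z_{\eta }\right] \in \tau $ to $\left[ Y,Z_{\eta }\right] \in \tau $. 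Your single closed-form identity absorbs both directions at once: integrability of $\tau $ kills the right-hand side, and conversely the invertibility of $Id-K$ on $\tau $ (with inverse $\sum_{j=0}^{m-1}K^{j}$) lets $\left( K-Id\right) V_{\tau }$ sweep out all sections of $\tau $, so vanishing of $\Phi ^{m}N_{\Phi }$ is literally the Frobenius condition for $\tau $. A further small gain is that your argument never uses the finer block structure $K=A+B$ with $A:\eta \rightarrow \xi $, $B:\eta \rightarrow \eta $, $\xi =\ker K$; it works verbatim for any nilpotent $K:\tau \rightarrow \tau $ with $K^{m}=0$, whereas the paper's case analysis is tied to the splitting $\tau =\xi \oplus \eta $. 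The only cosmetic caveat is that the tensoriality remark is not really needed: since $\tau $ and $\zeta $ are smooth subbundles, every smooth vector field already has smooth $\tau $- and $\zeta $-components, so the identity holds for arbitrary $V,W\in \mathfrak{X}\left( M\right) $ directly.
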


\begin{proof}
We have%
\begin{equation*}
K=\left( 
\begin{array}{ccc}
& \underbrace{\dim \xi } & \underbrace{\dim \eta } \\ 
\dim \xi \ \{ & 0 & A \\ 
\dim \eta \ \{ & 0 & B%
\end{array}%
\right)
\end{equation*}%
and%
\begin{equation*}
\Phi =\left( 
\begin{array}{ccc}
& \underbrace{\dim \tau } & \underbrace{\dim \zeta } \\ 
\dim \tau \ \{ & K & 0 \\ 
\dim \zeta \ \{ & 0 & Id%
\end{array}%
\right) ,
\end{equation*}%
So $K^{m}=0$, $\Phi ^{m}=0$ on $\tau $ and $\Phi ^{m}=Id$ on $\zeta $.

Let $Y,Z\in TM$, $Y=Y_{\tau }+Y_{\zeta }$, $Y_{\tau }=Y_{\xi }+Y_{\eta }$, $%
Z=Z_{\tau }+Z_{\zeta }$,\ $Z_{\tau }=Z_{\xi }+Z_{\eta }$, $Y_{\xi },Z_{\xi
}\in \xi $, $Y_{\eta },Z_{\eta }\in \eta $, $Y_{\zeta },Z_{\zeta }\in \zeta $%
. We have $\Phi ^{m+j}=\Phi ^{m}$ for every $j\in \mathbb{N}$, so%
\begin{eqnarray}
\left( \Phi ^{m}\left[ \Phi ,\Phi \right] _{\mathcal{FN}}\right) \left[ Y,Z%
\right] &=&\Phi ^{m}\left( \left[ \Phi Y,\Phi Z\right] +\Phi ^{2}\left[ Y,Z%
\right] -\Phi \left[ \Phi Y,Z\right] -\Phi \left[ Y,\Phi Z\right] \right) 
\notag \\
&=&\Phi ^{m}\left[ \Phi Y,\Phi Z\right] +\Phi ^{m}\left[ Y,Z\right] -\Phi
^{m}\left[ \Phi Y,Z\right] -\Phi ^{m}\left[ Y,\Phi Z\right] .  \label{e}
\end{eqnarray}

Suppose that $\tau $ is integrable. Since $\xi =\ker \Phi $, 
\begin{eqnarray*}
\Phi Y &=&\Phi Y_{\eta }+\Phi Y_{\zeta }=AY_{\eta }+BY_{\eta }+Y_{\zeta
}=C_{\tau }+Y_{\zeta }, \\
\Phi Z &=&\Phi Z_{\eta }+\Phi Z_{\zeta }=AZ_{\eta }+BZ_{\eta }+Z_{\zeta
}=D_{\tau }+Y_{\zeta },
\end{eqnarray*}%
where $C_{\tau }=AY_{\eta }+BY_{\eta }\in \tau $ and $D_{\tau }=AZ_{\eta
}+BZ_{\eta }\in \tau $. Since $\tau $ is integrable, $\left[ C_{\tau
},D_{\tau }\right] \in \tau =Ker$~$\Phi ^{m}$, so $\Phi ^{m}\left( \left[
C_{\tau },D_{\tau }\right] \right) =0$ and 
\begin{equation}
\Phi ^{m}\left[ \Phi Y,\Phi Z\right] =\Phi ^{m}\left( \left[ C_{\tau
},Z_{\zeta }\right] \right) +\Phi ^{m}\left( \left[ Y_{\zeta },D_{\tau }%
\right] \right) +\Phi ^{m}\left( \left[ Y_{\zeta },Z_{\zeta }\right] \right)
.  \label{a}
\end{equation}

Similarly, since $\left[ Y_{\tau },Z_{\tau }\right] ,\ \left[ C_{\tau
},Z_{\tau }\right] ,\ \left[ Y_{\tau },D_{\tau }\right] \in \tau =Ker$~$\Phi
^{m}$ 
\begin{equation}
\Phi ^{m}\left[ Y,Z\right] =\Phi ^{m}\left( \left[ Y_{\tau }+Y_{\zeta
},Z_{\tau }+Z_{\zeta }\right] \right) =\Phi ^{m}\left( \left[ Y_{\zeta
},Z_{\tau }\right] \right) +\Phi ^{m}\left( \left[ Y_{\tau },Z_{\zeta }%
\right] \right) +\Phi ^{m}\left( \left[ Y_{\zeta },Z_{\zeta }\right] \right)
,  \label{b}
\end{equation}

\begin{equation}
\Phi ^{m}\left[ \Phi Y,Z\right] =\Phi ^{m}\left[ C_{\tau }+Y_{\zeta
},Z_{\tau }+Z_{\zeta }\right] =\Phi ^{m}\left[ C_{\tau },Z_{\zeta }\right]
+\Phi ^{m}\left[ Y_{\zeta },Z_{\tau }\right] +\Phi ^{m}\left[ Y_{\zeta
},Z_{\zeta }\right] .  \label{c}
\end{equation}

\begin{equation}
\Phi ^{m}\left[ Y,\Phi Z\right] =\Phi ^{m}\left[ Y_{\tau }+Y_{\zeta
},D_{\tau }+Z_{\zeta }\right] =\Phi ^{m}\left[ Y_{\zeta },D_{\tau }\right]
+\Phi ^{m}\left[ Y_{\tau },Z_{\zeta }\right] +\Phi ^{m}\left[ Y_{\zeta
},Z_{\zeta }\right]  \label{d}
\end{equation}%
Replacing \ (\ref{a}), (\ref{b}), (\ref{c}), (\ref{d}) in (\ref{e}) we obtain%
\begin{eqnarray*}
\left( \Phi ^{m}\left[ \Phi ,\Phi \right] _{\mathcal{FN}}\right) \left[ Y,Z%
\right] &=&\Phi ^{m}\left( \left[ C_{\tau },Z_{\zeta }\right] \right) +\Phi
^{m}\left( \left[ Y_{\zeta },D_{\tau }\right] \right) +\Phi ^{m}\left( \left[
Y_{\zeta },Z_{\zeta }\right] \right) \\
&&+\Phi ^{m}\left( \left[ Y_{\zeta },Z_{\tau }\right] \right) +\Phi
^{m}\left( \left[ Y_{\tau },Z_{\zeta }\right] \right) +\Phi ^{m}\left( \left[
Y_{\zeta },Z_{\zeta }\right] \right) \\
&&-\left( \Phi ^{m}\left[ C_{\tau },Z_{\zeta }\right] +\Phi ^{m}\left[
Y_{\zeta },Z_{\tau }\right] +\Phi ^{m}\left[ Y_{\zeta },Z_{\zeta }\right]
\right) \\
&&-\left( \Phi ^{m}\left[ Y_{\zeta },D_{\tau }\right] +\Phi ^{m}\left[
Y_{\tau },Z_{\zeta }\right] +\Phi ^{m}\left[ Y_{\zeta },Z_{\zeta }\right]
\right) \\
&=&0,
\end{eqnarray*}%
and it follows that the canonical solution associated to $\Phi $ is of
finite type $\leqslant m$.

Conversely, suppose that the canonical solution of the Maurer-Cartan
equation associated to $\Phi $ is of finite type $k\leqslant m$, i.e. $\Phi
^{m}\left[ \Phi ,\Phi \right] _{\mathcal{FN}}=0$. We will prove that $\left[
Y,Z\right] \in \tau =Ker$~$\Phi ^{m}$ for every $Y,Z\in \tau $ by taking in
account several cases.

a) Let $Y,Z\in \xi $. Then $\Phi Y=\Phi Z=0$ and by using (\ref{e}) we obtain%
\begin{equation*}
\Phi ^{m}\left[ \Phi ,\Phi \right] _{\mathcal{FN}}\left( \left[ Y,Z\right]
\right) =\Phi ^{m+2}\left[ Y,Z\right] =\Phi ^{m}\left[ Y,Z\right] =0.
\end{equation*}%
So $\left[ Y,Z\right] \in \tau $.

b) Let $Y\in \xi ,Z\in \tau $, $Z=Z_{\xi }+Z_{\eta }$. Then%
\begin{equation}
\left[ Y,Z\right] =\left[ Y,Z_{\xi }\right] +\left[ Y,Z_{\eta }\right] .
\label{g}
\end{equation}%
By a) $\left[ Y,Z_{\xi }\right] \in \tau $ and from (\ref{g}) it follows
that $\left[ Y,Z\right] \in \tau $ if and only if $\left[ Y,Z_{\eta }\right]
\in \tau $.

Since $\Phi Y=0$, by using (\ref{e}) we have

\begin{eqnarray*}
\left[ \Phi ,\Phi \right] _{\mathcal{FN}}\left( \left[ Y,Z_{\eta }\right]
\right) &=&\left[ \Phi Y,\Phi Z_{\eta }\right] +\Phi ^{2}\left[ Y,Z_{\eta }%
\right] -\Phi \left[ \Phi Y,Z_{\eta }\right] -\Phi \left[ Y,\Phi Z_{\eta }%
\right] \\
&=&\Phi ^{2}\left[ Y,Z_{\eta }\right] -\Phi \left[ Y,\Phi Z_{\eta }\right] ,
\end{eqnarray*}%
and%
\begin{eqnarray*}
\Phi ^{m}\left[ \Phi ,\Phi \right] _{\mathcal{FN}}\left( \left[ Y,Z_{\eta }%
\right] \right) &=&\Phi ^{m}\left[ Y,Z_{\eta }\right] -\Phi ^{m}\left[
Y,\Phi Z_{\eta }\right] \\
&=&\Phi ^{m}\left( \left[ Y,\left( Id-\Phi \right) Z_{\eta }\right] \right)
=0.
\end{eqnarray*}%
In particular $\left[ Y,\left( Id-\Phi \right) Z_{\eta }\right] \in \tau $
for every $Z_{\eta }\in \eta $.

But 
\begin{equation}
\det \left( Id-\Phi _{\left\vert \eta \right. }\right) =\det \left(
Id-B\right) =1,  \label{f}
\end{equation}%
so for every $X_{\eta }\in \eta $ there exists $Z_{\eta }\in \eta $ such
that $X_{\eta }=\left( Id-\Phi \right) Z_{\eta }$. It follows that $\left[
Y,Z\right] \in \tau $ for every $Y\in \xi $ and $Z\in \tau $.

c) Let $Y,Z\in \tau $, $Y=Y_{\xi }+Y_{\eta }$, $Z=Z_{\xi }+Z_{\eta }$, $%
Y_{\xi },Z_{\xi }\in \xi $ $Y_{\eta },Z_{\eta }\in \eta $. Since 
\begin{equation*}
\left[ Y,Z\right] =\left[ Y_{\xi }+Y_{\eta },Z_{\xi }+Z_{\eta }\right] =%
\left[ Y_{\xi },Z_{\xi }\right] +\left[ Y_{\eta },Z_{\xi }\right] +\left[
Y_{\xi },Z_{\eta }\right] +\left[ Y_{\eta },Z_{\eta }\right] .
\end{equation*}%
and $\left[ Y_{\xi },Z_{\xi }\right] ,\left[ Y_{\eta },Z_{\xi }\right] ,%
\left[ Y_{\xi },Z_{\eta }\right] \in \tau $ it follows that $\left[ Y,Z%
\right] \in \tau $ if and only if $\left[ Y_{\eta },Z_{\eta }\right] \in
\tau $.

We have 
\begin{eqnarray*}
\left[ \Phi ,\Phi \right] _{\mathcal{FN}}\left( \left[ Y_{\eta },Z_{\eta }%
\right] \right) &=&\left[ \Phi Y_{\eta },\Phi Z_{\eta }\right] +\Phi ^{2}%
\left[ Y_{\eta },Z_{\eta }\right] -\Phi \left[ \Phi Y_{\eta },Z_{\eta }%
\right] -\Phi \left[ Y_{\eta },\Phi Z_{\eta }\right] \\
&=&\left[ AY_{\eta }+BY_{\eta },AZ_{\eta }+BZ_{\eta }\right] +\Phi ^{2}\left[
Y_{\eta },Z_{\eta }\right] \\
&&-\Phi \left[ AY_{\eta }+BY_{\eta },Z_{\eta }\right] -\Phi \left[ Y_{\eta
},AZ_{\eta }+BZ_{\eta }\right] .
\end{eqnarray*}%
Since $AY_{\eta },AZ_{\eta }\in \xi $ it follows that $\left[ AY_{\eta
},AZ_{\eta }\right] ,\left[ AY_{\eta },BZ_{\eta }\right] ,\left[ BY_{\eta
},AZ_{\eta }\right] ,\left[ AY_{\eta },Z_{\eta }\right] ,\left[ Y_{\eta
},AZ_{\eta }\right] \in \tau =\ker \Phi ^{r}$ and%
\begin{eqnarray*}
\Phi ^{m}\left[ \Phi ,\Phi \right] _{\mathcal{FN}}\left( \left[ Y_{\eta
},Z_{\eta }\right] \right) &=&\Phi ^{m}\left[ BY_{\eta },BZ_{\eta }\right]
+\Phi ^{m}\left[ Y_{\eta },Z_{\eta }\right] -\Phi ^{m}\left[ BY_{\eta
},Z_{\eta }\right] -\Phi ^{m}\left[ Y_{\eta },BZ_{\eta }\right] \\
&=&\Phi ^{m}\left[ Y_{\eta },\left( Id-B\right) Z_{\eta }\right] -\Phi ^{m}%
\left[ BY_{\eta },\left( Id-B\right) Z_{\eta }\right] \\
&=&\Phi ^{m}\left[ \left( Id-B\right) Y_{\eta },\left( Id-B\right) Z_{\eta }%
\right] =0
\end{eqnarray*}

for every $Y_{\eta },Z_{\eta }\in \eta $.

As before, by (\ref{f}) it follows that $\Phi ^{r}\left[ Y_{\eta },Z_{\eta }%
\right] =0$ for every $Y_{\eta },Z_{\eta }\in \eta $ and this implies that $%
\left[ Y,Z\right] \in \tau $ for every $Y,Z\in \tau $.
\end{proof}

In order to compute the type of the canonical solution of Theorem \ref{Type
r>1} we need the following elementary lemma:

\begin{lemma}
\label{min nilpotent}Let%
\begin{equation*}
K=\left( 
\begin{array}{ccc}
& \underbrace{s} & \underbrace{d-s} \\ 
s\ \{ & 0 & A \\ 
d-s\ \{ & 0 & B%
\end{array}%
\right)
\end{equation*}%
a $\left( d,d\right) $ nilpotent matrix of rank $d-s>0$, $s\geqslant 1$. Set 
$r=\min \left\{ m\in \mathbb{N}:\ K^{m}=0\right\} $. Then $r=\min \left\{
m\in \mathbb{N}:\ m\geqslant \frac{d}{s}\right\} $.
\end{lemma}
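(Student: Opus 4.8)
The plan is to reduce everything to the block powers of $K$ and then to track how the rank decreases. First I would record, by a one-line induction, that for every $m\geqslant 1$
\[
K^{m}=\begin{pmatrix} 0 & AB^{m-1} \\ 0 & B^{m}\end{pmatrix},
\]
so that $K^{m}=0$ is equivalent to the pair of conditions $B^{m}=0$ and $AB^{m-1}=0$. This splits the computation of $r$ into the nilpotency of $B$ together with the way $A$ annihilates the powers of $B$. Next I would unpack the rank hypothesis: $\operatorname{rank}K=d-s$ means exactly that the $d\times(d-s)$ matrix $\begin{pmatrix} A \\ B\end{pmatrix}$ has full column rank, equivalently that $\ker A\cap\ker B=0$ and hence $\dim\ker K=s$ (writing $K\begin{pmatrix} u \\ v\end{pmatrix}=\begin{pmatrix} Av \\ Bv\end{pmatrix}$, the kernel is $\{(u,0)\}$).

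For the lower bound $r\geqslant\min\{m\in\mathbb{N}:m\geqslant d/s\}$ I would use the elementary fact that the successive kernel jumps $a_{j}:=\dim\ker K^{j}-\dim\ker K^{j-1}$ form a non-increasing sequence, since $K$ induces an injection $\ker K^{j+1}/\ker K^{j}\hookrightarrow\ker K^{j}/\ker K^{j-1}$, $[v]\mapsto[Kv]$. Because $a_{1}=\dim\ker K=s$, we get $a_{j}\leqslant s$ for all $j$, hence $\dim\ker K^{j}\leqslant js$. As $K^{r}=0$ forces $\dim\ker K^{r}=d$, this yields $d\leqslant rs$, i.e. $r\geqslant d/s$, and since $r$ is a positive integer, $r\geqslant\min\{m\in\mathbb{N}:m\geqslant d/s\}$.

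The upper bound is where the real content lies, and I expect it to be the main obstacle. To get $r\leqslant\min\{m\in\mathbb{N}:m\geqslant d/s\}$ one must show that the kernel jumps stay at their \emph{maximal} value $s$ for as long as possible, i.e. that $\operatorname{rank}K^{j}$ drops by exactly $s$ at each step until it vanishes; equivalently, that the Jordan blocks of $K$ are as balanced as the rank allows. The natural attempt is to prove by induction that $\operatorname{rank}K^{j}=\max(0,d-js)$, propagating the full-column-rank information through the identity for $K^{j}$ above and the relation $\ker A\cap\ker B=0$. The delicate term is precisely $AB^{j-1}$: this is the only place where $A$ and $B$ are coupled, so the whole weight of the argument must fall on the full-rank hypothesis forcing $A$ to interact with the powers of $B$ in the right way. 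I would scrutinize this step carefully, because the rank condition by itself does not obviously rule out an unbalanced Jordan structure (which would push $r$ strictly above $\min\{m:m\geqslant d/s\}$); making the coupling between $A$ and $B$ exclude such behaviour is exactly the crux of the lemma, and is where I would concentrate the effort.
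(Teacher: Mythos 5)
Your argument is incomplete exactly where you say it is: you establish only the inequality $r\geqslant\min\{m\in\mathbb{N}:\ m\geqslant d/s\}$ (that half is correct and cleanly done, via the non-increasing kernel jumps), while the reverse inequality is left as a hope that the rank hypothesis forces the jumps to stay equal to $s$. Your suspicion that this is not automatic is justified in the strongest possible sense: the upper bound is \emph{false} under the stated hypotheses. Take $d=4$, $s=2$,
\begin{equation*}
A=\begin{pmatrix}1&0\\0&1\end{pmatrix},\qquad B=\begin{pmatrix}0&1\\0&0\end{pmatrix},\qquad K=\begin{pmatrix}0&A\\0&B\end{pmatrix}.
\end{equation*}
Then $K$ is nilpotent (its characteristic polynomial is $\lambda^{4}$) and has rank $2=d-s$, since the last two columns are independent; yet $K^{2}=\bigl(\begin{smallmatrix}0&AB\\0&B^{2}\end{smallmatrix}\bigr)=\bigl(\begin{smallmatrix}0&B\\0&0\end{smallmatrix}\bigr)\neq0$, so $r=3$ while $\min\{m:\ m\geqslant d/s\}=2$. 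The Jordan type here is $(3,1)$: the rank condition pins down only the first kernel jump, not the later ones, so an unbalanced Jordan structure is indeed possible.

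The paper's own proof founders on precisely this point: it opens with ``since $K$ is nilpotent of maximal rank we may suppose that $K=\bigl(\begin{smallmatrix}0&Id\\0&0\end{smallmatrix}\bigr)$'', a normalization the example shows cannot be achieved in general (no conjugation preserving the block structure kills a nonzero nilpotent $B$). What survives is the special case actually needed downstream: in Corollary \ref{Csi *} the lemma is applied only to the shift matrix $\Phi X_{i}=X_{i-s}$, for which the direct computation of $K^{j}$ in the paper's proof is valid and does give $r=\min\{m:\ m\geqslant d/s\}$. So the honest conclusion is that your lower bound holds for every $K$ as in the statement, the upper bound holds only under an extra hypothesis (e.g.\ that $K$ is the standard shift, equivalently that $\operatorname{rank}K^{j}=\max(0,d-js)$ for all $j$), and no amount of work on the coupling between $A$ and $B$ will close the gap as the lemma is currently phrased.
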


\begin{proof}
Since $K$ is nilpotent of maximal rank we may suppose that 
\begin{equation*}
K=\left( 
\begin{array}{ccc}
& \underbrace{s} & \underbrace{d-s} \\ 
s\ \{ & 0 & Id \\ 
d-s\ \{ & 0 & 0%
\end{array}%
\right) .
\end{equation*}%
By induction it follows that if $d-js>0$, we have%
\begin{equation*}
K^{j}=\left( 
\begin{array}{ccc}
& \underbrace{js} & \underbrace{d-js} \\ 
js\ \{ & 0 & Id \\ 
d-js\ \{ & 0 & 0%
\end{array}%
\right) \neq 0
\end{equation*}%
and $K^{j}=0$ for each $j\in \mathbb{N}^{\ast }$ such that $d-js\leqslant 0$.
\end{proof}

\begin{notation}
Let $\xi \subset TM$ a distribution. We denote by $\xi ^{\ast }$ the
smallest involutive subset of $TM$ such that $\xi \subset \xi ^{\ast }$. If $%
\mathcal{E}=\left\{ X_{1},\cdot \cdot \cdot ,X_{s}\right\} $ are generators
of $\xi $ on an open subset $U$ of $M$, then for every $x\in U$, $\xi
_{x}^{\ast }$ is the linear subspace of $T_{x}M$ generated by $\left[
X_{i_{1}},\left[ X_{i_{2}},\left[ \cdot \cdot \cdot ,X_{i_{k}}\right] \right]
\right] \left( x\right) $, $k\geqslant 1,\ 1\leqslant i_{k}\leqslant s.$
\end{notation}

\begin{remark}
If $\dim \xi _{x}^{\ast }$ is independent of $x$, $\xi ^{\ast }$ is a
distribution, but in general $\dim \xi _{x}^{\ast }$ depends on $x$. If $\xi
^{\ast }$ is a distribution, then $\xi ^{\ast }$ is the smallest integrable
distribution containing $\xi $ \cite{Sussmann1973}.
\end{remark}

\begin{corollary}
\label{Csi *}Let $M$ be a smooth manifold of dimension $n$, $\xi \subset TM$
a distribution of dimension $s$ such that $\xi ^{\ast }$ is a distribution
of dimension $d$. Then for every $x\in M$ there exists a neighborhood $U$ of 
$x$ and $\Phi \in \Lambda ^{1}\left( U,TU\right) $ such that the canonical
solution of the Maurer-Cartan equation associated to $\Phi $ is of finite
type $\leqslant r$, where $r=\min \left\{ m\in \mathbb{N}:\ m\geqslant \frac{%
d}{s}\right\} $.
\end{corollary}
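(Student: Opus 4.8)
The plan is to realize the statement as a single instance of Theorem~\ref{Type r>1}, taking the integrable distribution $\tau$ there to be $\xi^{\ast}$ and building by hand a nilpotent endomorphism $K$ of $\tau$ with kernel exactly $\xi$ whose nilpotency index is pinned down by Lemma~\ref{min nilpotent}. Since $\xi^{\ast}$ is assumed to be a distribution, it is the smallest integrable distribution containing $\xi$ (cf.\ \cite{Sussmann1973}), so $\tau:=\xi^{\ast}$ is integrable of dimension $d$. If $d=s$, then $\xi=\xi^{\ast}$ is already integrable and $r=1$; in this degenerate case I would simply apply Theorem~\ref{Type 0 and 1} to the endomorphism that vanishes on $\xi$ and equals the identity on a complement, obtaining a canonical solution of finite type $\leqslant 1=r$. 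So from now on I assume $d>s$, i.e.\ $d-s>0$.

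First I would fix $x\in M$ and choose a neighborhood $U$ carrying a smooth frame $X_{1},\dots ,X_{n}$ adapted to the flag $\xi\subset\tau\subset TM$: the fields $X_{1},\dots ,X_{s}$ frame $\xi$, the fields $X_{1},\dots ,X_{d}$ frame $\tau$, and all of $X_{1},\dots ,X_{n}$ frame $TU$. Setting $\eta=\mathrm{span}\,(X_{s+1},\dots ,X_{d})$ and $\zeta=\mathrm{span}\,(X_{d+1},\dots ,X_{n})$ produces the decompositions $\tau=\xi\oplus\eta$ and $TU=\tau\oplus\zeta$ required by Theorem~\ref{Type r>1}, with $\dim\eta=d-s$ and $\dim\zeta=n-d$.

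Next I would define $K\colon\tau\to\tau$ on the frame by the $s$-step shift $KX_{i}=0$ for $1\leqslant i\leqslant s$ and $KX_{i}=X_{i-s}$ for $s<i\leqslant d$; since the frame is smooth and the defining matrix is constant, this is a smooth endomorphism field. By construction $K$ annihilates $\xi$ and sends the basis $X_{s+1},\dots ,X_{d}$ of $\eta$ injectively onto $X_{1},\dots ,X_{d-s}$, so $K$ has maximal rank $d-s$ and $\ker K=\xi$ exactly. Splitting $K|_{\eta}$ into its $\xi$- and $\eta$-components gives the maps $A\colon\eta\to\xi$ and $B\colon\eta\to\eta$ with $K=A+B$ on $\eta$, so $K$ has precisely the block form of both Theorem~\ref{Type r>1} and Lemma~\ref{min nilpotent}. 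Applying Lemma~\ref{min nilpotent} to this nilpotent $K$ of rank $d-s$ yields $\min\{m\in\mathbb{N}:K^{m}=0\}=r$; in particular $K^{r}=0$.

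Finally I would set $\Phi=K$ on $\tau$ and $\Phi=Id$ on $\zeta$, a smooth element of $\Lambda^{1}(U,TU)$. Because $\Phi$ is block-diagonal for $TU=\tau\oplus\zeta$, with $Id+K$ invertible on $\tau$ (as $K$ is nilpotent) and $Id+Id=2\,Id$ invertible on $\zeta$, the operator $R_{\Phi}=Id+\Phi$ is invertible and $e_{\Phi}$ is defined. Since $\tau=\xi^{\ast}$ is integrable and $K^{r}=0$, Theorem~\ref{Type r>1} with $m=r$ then gives at once that $e_{\Phi}$ is of finite type $\leqslant r$. I do not expect a deep obstacle here, since the corollary is essentially a repackaging of Theorem~\ref{Type r>1} and Lemma~\ref{min nilpotent}; the only points demanding care are the explicit construction of the nilpotent $K$ realizing kernel exactly $\xi$ together with minimal nilpotency index $r$, and the separate treatment of the degenerate case $d=s$.
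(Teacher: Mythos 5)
Your proposal is correct and follows essentially the same route as the paper: choose a local frame adapted to the flag $\xi\subset\xi^{\ast}\subset TM$, define $\Phi$ as the $s$-step shift $K$ on $\xi^{\ast}$ and the identity on a complement, and conclude via Lemma~\ref{min nilpotent} and Theorem~\ref{Type r>1} (with the degenerate case $d=s$ handled by Theorem~\ref{Type 0 and 1}, exactly as in the paper). The extra checks you record --- that $\ker K=\xi$, that $K$ has the block form $A+B$ on $\eta$, and that $R_{\Phi}=Id+\Phi$ is invertible --- are left implicit in the paper but are worth making explicit.
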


\begin{proof}
If $\xi $ is integrable, $d=s$, $r=1$ and the corollary follows from Theorem %
\ref{Type 0 and 1}.

Suppose that $\xi $ is not integrable, i. e. $d>s$. For each $x\in M$ there
exists a neighborhood $U$ of $x$ and a basis $\left( X_{1},\cdot \cdot \cdot
,X_{n}\right) $ of $TM$ on $U$ such that $\left( X_{1},\cdot \cdot \cdot
,X_{s}\right) $ is a basis of $\xi $ and $\left( X_{1},\cdot \cdot \cdot
,X_{d}\right) $ is a basis of $\xi ^{\ast }$ on $U$.

We define $\Phi \in End\left( TU\right) $ as $\Phi X_{i}=0$, $i=1,\cdot
\cdot \cdot ,s$ $\Phi \left( X_{i}\right) =X_{i-s}$, $i=s+1,\cdot \cdot
\cdot ,d$, $\Phi \left( X_{i}\right) =X_{i}$, $i=d+1,\cdot \cdot \cdot ,n$.
Then the matrix of $\Phi $ in the basis $\left( X_{1},\cdot \cdot \cdot
,X_{n}\right) $ is 
\begin{equation*}
\Phi =\left( 
\begin{array}{ccc}
& \underbrace{d} & \underbrace{n-d} \\ 
d\ \{ & K & 0 \\ 
n-d\ \{ & 0 & Id%
\end{array}%
\right)
\end{equation*}%
where 
\begin{equation*}
K=\left( 
\begin{array}{ccc}
& \underbrace{s} & \underbrace{d-s} \\ 
s\ \{ & 0 & Id \\ 
d-s\ \{ & 0 & 0%
\end{array}%
\right) .
\end{equation*}

Since $r\geqslant 2$, by Lemma \ref{min nilpotent} and Theorem \ref{Type r>1}%
, the canonical equation solution of the Maurer-Cartan equation associated
to $\Phi $ is of finite type $\leqslant r$.
\end{proof}

\begin{remark}
In \cite{PdBandAI2017} it is proved that the deformation theory in the DGLA $%
\left( \mathcal{D}^{\ast }\left( M\right) ,\daleth ,\left[ \cdot ,\cdot %
\right] \right) $ is not obstructed but it is level-wise obstructed.
\end{remark}

\section{Deformations of foliations of codimension $1$}

\begin{definition}
By a differentiable family of deformations of an integrable distribution $%
\xi $ we mean a differentiable family $\omega :\mathcal{D}=\left( \xi
_{t}\right) _{t\in I}\mapsto t\in I=]-a,a[$, $a>0$, of integrable
distributions such that $\xi _{0}=\omega ^{-1}\left( 0\right) =\xi $.
\end{definition}

\begin{remark}
An integrable distribution $\xi $ of codimension $1$ in a smooth manifold $L$
is called co-orientable if the normal space to the foliation defined by $\xi 
$ is orientable. We recall that $\xi $ is co-orientable if and only if there
exists a $1$-form $\gamma $ on $L$ such that $\xi =\ker ~\gamma $ (see for
ex. \cite{Godbillon91}). A couple $\left( \gamma ,X\right) $ where $\gamma
\in \wedge ^{1}\left( L\right) $ and $X$ is a vector field on $L$ such that $%
\ker ~\gamma =\xi $ and $\gamma \left( X\right) =1$ was called a DGLA
defining couple in \cite{PdbAI15}.

If $\left( \xi _{t}\right) _{t\in I}$ is a differentiable family of
deformations of an integrable co-orientable distribution $\xi $, then the
distribution $\xi _{t}$ is co-orientable for $t$ small enough. So, if $\xi $
is an integrable co-orientable distribution of codimension $1$ in $L$ and $%
\left( \xi _{t}\right) _{t\in I}$ is a differentiable family of deformations
of $\xi $ we may consider a DGLA defining couple $\left( \gamma
_{t},X_{t}\right) $ for every $t$ small enough such that $t\mapsto \left(
\gamma _{t},X_{t}\right) $ is differentiable on a neighborhood of the origin.
\end{remark}

\begin{lemma}
\label{gama tens X}Let $L$ be a $C^{\infty }$ manifold and $\xi \subset
T\left( L\right) $ a co-orientable distribution of codimension $1$. Let $%
\left( \gamma ,X\right) $ be a DGLA defining couple and denote $\Phi \in
End\left( TM\right) $ the endomorphism corresponding to $\gamma \otimes X\in
\Lambda ^{1}M\otimes TM$. Then $\Phi $ is defined on $TM=\xi \oplus \mathbb{R%
}\left[ X\right] $ as $\Phi =0$ on $\xi $ and $\Phi =Id$ on $\mathbb{R}\left[
X\right] $.
\end{lemma}

\begin{proof}
Let $Y=Y_{\xi }+\lambda X$ vector fields on $L$, $V_{\xi }\in \xi $, $%
\lambda \in \mathbb{R}$. Then 
\begin{equation*}
\left( \gamma \otimes X\right) \left( Y\right) =\gamma \left( Y\right)
X=\gamma \left( Y_{\xi }+\lambda X\right) X=\lambda X.
\end{equation*}
\end{proof}

\begin{lemma}
\label{Frobenius} Let $L$ be a $C^{\infty }$ manifold and $\xi \subset
T\left( L\right) $ a co-orientable distribution of codimension $1$. Let $%
\left( \gamma ,X\right) $ be a DGLA defining couple. Then the following are
equivalent:

i) $\xi$ is integrable;

ii) $d\gamma =-\iota _{X}d\gamma \wedge \gamma $.

iii) $\left[ \gamma \otimes X,\gamma \otimes X\right] _{\mathcal{FN}}=0$.
\end{lemma}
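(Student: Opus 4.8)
The plan is to prove the three-way equivalence by establishing $(i)\Leftrightarrow(ii)$ through the classical Frobenius criterion in codimension one, and $(ii)\Leftrightarrow(iii)$ by a direct evaluation of the Fr\"{o}licher--Nijenhuis bracket using Proposition \ref{Calcul [,]FN}. Throughout I would use that $\gamma\left( X\right) =1$ and $\xi =\ker \gamma $.

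For $(i)\Leftrightarrow(ii)$: recall that a codimension one distribution $\xi =\ker \gamma $ is integrable if and only if $d\gamma \wedge \gamma =0$, equivalently $d\gamma =\alpha \wedge \gamma $ for some $\alpha \in \Lambda ^{1}\left( L\right) $. If $\xi $ is integrable I would identify $\alpha $ modulo $\gamma $ by contracting with $X$: since $\iota _{X}\left( \alpha \wedge \gamma \right) =\alpha \left( X\right) \gamma -\alpha $ and $\iota _{X}\gamma =1$, one gets $\iota _{X}d\gamma =\alpha \left( X\right) \gamma -\alpha $, hence $\alpha =\alpha \left( X\right) \gamma -\iota _{X}d\gamma $. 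Substituting back and using $\gamma \wedge \gamma =0$ yields $d\gamma =\alpha \wedge \gamma =-\iota _{X}d\gamma \wedge \gamma $, which is $(ii)$. Conversely, $(ii)$ exhibits $d\gamma $ as a multiple of $\gamma $, so $d\gamma \wedge \gamma =0$ and $\xi $ is integrable by Frobenius.

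For $(ii)\Leftrightarrow(iii)$: I would apply the first formula of Proposition \ref{Calcul [,]FN} to $\alpha =\beta =\gamma $ and $X=Y$. The terms involving $\gamma \wedge \gamma $ and $\left[ X,X\right] $ vanish; using that $\mathcal{L}_{X}\gamma \wedge \gamma =-\gamma \wedge \mathcal{L}_{X}\gamma $ and that $\iota _{X}\gamma =1$ is constant, the computation collapses to $\left[ \gamma \otimes X,\gamma \otimes X\right] _{\mathcal{FN}}=2\left( \gamma \wedge \mathcal{L}_{X}\gamma -d\gamma \right) \otimes X$. Cartan's formula together with $\iota _{X}\gamma =1$ gives $\mathcal{L}_{X}\gamma =\iota _{X}d\gamma $, so the bracket equals $2\left( \gamma \wedge \iota _{X}d\gamma -d\gamma \right) \otimes X$. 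Since $\gamma \wedge \iota _{X}d\gamma =-\iota _{X}d\gamma \wedge \gamma $, this vanishes precisely when $d\gamma =-\iota _{X}d\gamma \wedge \gamma $, i.e. precisely when $(ii)$ holds. This closes the cycle.

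The computations are routine; the only place demanding care is the sign bookkeeping in the Fr\"{o}licher--Nijenhuis formula and the interior-product identity $\iota _{X}\left( \alpha \wedge \gamma \right) =\alpha \left( X\right) \gamma -\alpha $, where the factor $\iota _{X}\gamma =1$ must be tracked as a scalar rather than discarded. As an independent check one could instead derive $(i)\Leftrightarrow(iii)$ directly: by Lemma \ref{gama tens X} the endomorphism $\Phi $ attached to $\gamma \otimes X$ is the one in Corollary \ref{Type 0}, so integrability of $\xi $ is equivalent to $e_{\Phi }$ being of finite type $0$, which by Proposition \ref{Finite type 0} is equivalent to $N_{\Phi }=0$, i.e. to $\left[ \gamma \otimes X,\gamma \otimes X\right] _{\mathcal{FN}}=2N_{\Phi }=0$.
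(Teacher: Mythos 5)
Your proposal is correct and its core step, the evaluation of $\left[ \gamma \otimes X,\gamma \otimes X\right] _{\mathcal{FN}}=2\left( \gamma \wedge \iota _{X}d\gamma -d\gamma \right) \otimes X$ via Proposition \ref{Calcul [,]FN} and Cartan's formula, is exactly the computation the paper uses for $ii)\iff iii)$. The only difference is that you supply an explicit argument for $i)\iff ii)$ (and an optional independent check via Corollary \ref{Type 0}), whereas the paper simply cites this Frobenius variant to \cite{PdbAI15}; your version is therefore slightly more self-contained but not a different route.
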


\begin{proof}
$i)\iff ii)$ is a variant of the theorem of Frobenius and it was proved in 
\cite{PdbAI15}.

$ii)\iff iii)$. We have 
\begin{eqnarray*}
\left[ \gamma \otimes X,\gamma \otimes X\right] _{\mathcal{FN}} &=&\gamma
\wedge \mathcal{L}_{X}\gamma \otimes X-\mathcal{L}_{X}\gamma \wedge \gamma
\otimes X-\left( d\gamma \wedge \iota _{X}\gamma \otimes X+\iota _{X}\gamma
\wedge d\gamma \otimes X\right) \\
&=&2\gamma \wedge \mathcal{L}_{X}\gamma \otimes X-2d\gamma \otimes X=2\left(
\gamma \wedge d\iota _{X}\gamma +\gamma \wedge \iota _{X}d\gamma -d\gamma
\right) \otimes X \\
&=&2\left( \gamma \wedge \iota _{X}d\gamma -d\gamma \right) \otimes X.
\end{eqnarray*}
\end{proof}

We recall the following lemma from \cite{PdbAI15}:

\begin{lemma}
\label{Forms=DGLA}Let $L$ be a $C^{\infty }$ manifold and $X$ a vector field
on $L$. For $\alpha ,\beta \in \Lambda ^{\ast }\left( L\right) $, set%
\begin{equation}
\left\{ \alpha ,\beta \right\} =\mathcal{L}_{X}\alpha \wedge \beta -\alpha
\wedge \mathcal{L}_{X}\beta  \label{Lie braket 1}
\end{equation}%
where $\mathcal{L}_{X}$ is the Lie derivative. Then $\left( \Lambda ^{\ast
}\left( L\right) ,d,\left\{ \cdot ,\cdot \right\} \right) $ is a DGLA.
\end{lemma}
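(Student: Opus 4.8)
The plan is to verify directly the three axioms in the definition of a DGLA, relying on only two standard properties of the Lie derivative $\mathcal{L}_{X}$ acting on $\Lambda ^{\ast }\left( L\right) $: that it is a derivation of degree $0$ for the wedge product, i.e. $\mathcal{L}_{X}\left( \alpha \wedge \beta \right) =\mathcal{L}_{X}\alpha \wedge \beta +\alpha \wedge \mathcal{L}_{X}\beta $, and that it commutes with the exterior derivative, $\mathcal{L}_{X}d=d\mathcal{L}_{X}$ (both consequences of Cartan's formula $\mathcal{L}_{X}=d\iota _{X}+\iota _{X}d$). Axiom $1$ is then immediate: $\left( \Lambda ^{\ast }\left( L\right) ,d\right) $ is the de Rham complex, so $d$ is graded of degree $1$ with $d^{2}=0$, and the bracket visibly sends $\Lambda ^{p}\times \Lambda ^{q}$ into $\Lambda ^{p+q}$, hence respects the additive grading.

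For the graded antisymmetry in axiom $2$, I would write $\left\{ \beta ,\alpha \right\} =\mathcal{L}_{X}\beta \wedge \alpha -\beta \wedge \mathcal{L}_{X}\alpha $ and commute each wedge factor using $\omega \wedge \eta =\left( -1\right) ^{\left\vert \omega \right\vert \left\vert \eta \right\vert }\eta \wedge \omega $; since $\mathcal{L}_{X}$ preserves degree, this produces a common factor $\left( -1\right) ^{\deg \alpha \deg \beta }$ and yields $\left\{ \beta ,\alpha \right\} =-\left( -1\right) ^{\deg \alpha \deg \beta }\left\{ \alpha ,\beta \right\} $. For the graded Jacobi identity I would expand all three terms with the Leibniz rule for $\mathcal{L}_{X}$; the crucial simplification is that in $\mathcal{L}_{X}\left\{ \beta ,\gamma \right\} =\mathcal{L}_{X}^{2}\beta \wedge \gamma +\mathcal{L}_{X}\beta \wedge \mathcal{L}_{X}\gamma -\mathcal{L}_{X}\beta \wedge \mathcal{L}_{X}\gamma -\beta \wedge \mathcal{L}_{X}^{2}\gamma $ the two mixed $\mathcal{L}_{X}\cdot \mathcal{L}_{X}$ terms cancel, leaving only the pure second-derivative and zeroth-derivative contributions. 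After reordering the wedge factors in $\left( -1\right) ^{\deg \alpha \deg \beta }\left\{ \beta ,\left\{ \alpha ,\gamma \right\} \right\} $ (each transposition of $\alpha $ or $\mathcal{L}_{X}\alpha $ past $\beta $ or $\mathcal{L}_{X}\beta $ contributes $\left( -1\right) ^{\deg \alpha \deg \beta }$, cancelling the prefactor), one checks that $\left\{ \left\{ \alpha ,\beta \right\} ,\gamma \right\} +\left( -1\right) ^{\deg \alpha \deg \beta }\left\{ \beta ,\left\{ \alpha ,\gamma \right\} \right\} $ reduces termwise to $\left\{ \alpha ,\left\{ \beta ,\gamma \right\} \right\} $.

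For axiom $3$ I would compute $d\left\{ \alpha ,\beta \right\} =d\left( \mathcal{L}_{X}\alpha \wedge \beta \right) -d\left( \alpha \wedge \mathcal{L}_{X}\beta \right) $ with the Leibniz rule for $d$, using $\deg \mathcal{L}_{X}\alpha =\deg \alpha $; the terms involving $d\beta $ assemble into $\left( -1\right) ^{\deg \alpha }\left\{ \alpha ,d\beta \right\} $ and those involving $d\alpha $ into $\left\{ d\alpha ,\beta \right\} $, once $d\mathcal{L}_{X}$ is replaced by $\mathcal{L}_{X}d$. This gives exactly $d\left\{ \alpha ,\beta \right\} =\left\{ d\alpha ,\beta \right\} +\left( -1\right) ^{\deg \alpha }\left\{ \alpha ,d\beta \right\} $.

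All the computations are elementary; the only real obstacle is the sign bookkeeping, and the single structural fact that makes everything work is that $\mathcal{L}_{X}$ is an even derivation commuting with $d$. Indeed, the argument shows more generally that any even derivation $D$ of a graded-commutative differential algebra satisfying $dD=Dd$ produces a DGLA via $\left\{ a,b\right\} =Da\wedge b-a\wedge Db$, the present lemma being the case $D=\mathcal{L}_{X}$.
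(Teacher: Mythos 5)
Your proof is correct. Note that the paper itself does not prove this lemma at all --- it is introduced with ``We recall the following lemma from \cite{PdbAI15}'' --- so there is no internal argument to compare against; your direct verification of the three axioms is exactly what is needed. The two facts you isolate ($\mathcal{L}_{X}$ is an even derivation of the wedge product and commutes with $d$) do suffice: graded antisymmetry follows from degree-preservation of $\mathcal{L}_{X}$ and graded commutativity of $\wedge $; in the Jacobi identity the mixed terms $\mathcal{L}_{X}\alpha \wedge \mathcal{L}_{X}\beta $ cancel pairwise as you say, leaving only terms involving $\mathcal{L}_{X}^{2}$ or no derivative, and these match up after the sign bookkeeping you describe; and the compatibility of $d$ with the bracket is a one-line Leibniz computation once $d\mathcal{L}_{X}=\mathcal{L}_{X}d$ is invoked. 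Your closing observation --- that any even derivation $D$ commuting with $d$ on a graded-commutative differential algebra yields a DGLA via $\left\{ a,b\right\} =Da\wedge b-a\wedge Db$ --- is also valid and is in fact the cleanest way to package the argument.
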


\begin{proposition}
\label{delta alfa}Let $L$ be a $C^{2}$ manifold and $\xi \subset T\left(
L\right) $ an integrable co-orientable distribution of codimension $1$. Let $%
\left( \xi _{t}\right) _{t\in I}$ be a differentiable family of deformations
of $\xi $ such that $\xi _{t}$ is co-orientable and integrable for every $%
t\in I$ and let $\left( \gamma _{t},X_{t}\right) $ a DGLA defining couple
for $\xi _{t}$ such that $t\mapsto \left( \gamma _{t},X_{t}\right) $ is
differentiable on $I$. Denote $\gamma =\gamma _{0}$, $\alpha =\frac{d\gamma
_{t}}{dt}_{\left\vert t=0\right. }$, $X=X_{0}$,\ $Y=\frac{dX_{t}}{dt}%
_{\left\vert t=0\right. }$. Then%
\begin{equation*}
\delta \alpha +\mathcal{L}_{Y}\gamma \wedge \gamma =0
\end{equation*}%
where 
\begin{equation*}
\delta =d+\left\{ \gamma ,\cdot \right\}
\end{equation*}%
and $\left\{ \cdot ,\cdot \right\} $ is defined in (\ref{Lie braket 1}).

In particular $\delta \alpha \left( V,W\right) =0$ for every vector fields $%
V,W$ tangent to $\xi $.
\end{proposition}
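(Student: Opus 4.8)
The plan is to derive the identity by differentiating, at $t=0$, the two conditions that $(\gamma_t,X_t)$ satisfies for all $t$ --- the normalization $\gamma_t(X_t)=1$ and the integrability of $\xi_t$ --- and then to check that the resulting $2$-form is annihilated by the two tests adapted to the splitting $TL=\mathbb{R}[X]\oplus\xi$. Differentiating $\gamma_t(X_t)=1$ gives the scalar relation $\alpha(X)+\gamma(Y)=0$. By Lemma~\ref{Frobenius} the integrability of $\xi_t$ is equivalent to $d\gamma_t\wedge\gamma_t=0$; differentiating this (and using $\frac{d}{dt}\,d\gamma_t=d\alpha$) yields the $3$-form relation $d\alpha\wedge\gamma+d\gamma\wedge\alpha=0$. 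At $t=0$ the same lemma gives $d\gamma=\gamma\wedge\mu$ with $\mu:=\iota_X d\gamma$, and Cartan's formula together with $\gamma(X)=1$ gives $\mathcal{L}_X\gamma=\iota_X d\gamma=\mu$, $\mathcal{L}_X\alpha=d(\alpha(X))+\iota_X d\alpha$ and $\mathcal{L}_Y\gamma=-d(\alpha(X))+\iota_Y d\gamma$.

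Next I would expand $\Theta:=\delta\alpha+\mathcal{L}_Y\gamma\wedge\gamma$, using $\delta\alpha=d\alpha+\mathcal{L}_X\gamma\wedge\alpha-\gamma\wedge\mathcal{L}_X\alpha$ from the definition of $\delta$ and (\ref{Lie braket 1}). The term $-\gamma\wedge d(\alpha(X))$ produced by $\mathcal{L}_X\alpha$ cancels the term $+\gamma\wedge d(\alpha(X))$ produced by $\mathcal{L}_Y\gamma\wedge\gamma$, the cancellation being forced precisely by $\gamma(Y)=-\alpha(X)$. What remains is $\Theta=d\alpha+\mu\wedge\alpha-\gamma\wedge\iota_X d\alpha+\iota_Y d\gamma\wedge\gamma$.

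Since $\gamma(X)=1$, the vector $X$ spans a complement of $\xi=\ker\gamma$, so in a coframe $\gamma,e^1,\dots,e^{n-1}$ with $e^i(X)=0$ any $2$-form is determined by its $X$-contraction together with its wedge with $\gamma$; hence it suffices to show $\iota_X\Theta=0$ and $\gamma\wedge\Theta=0$. For the first, I would contract $\Theta$ term by term, using $\mu(X)=d\gamma(X,X)=0$ and $\iota_X\iota_X=0$, then substitute $d\gamma=\gamma\wedge\mu$; the contributions collapse to $\iota_X\Theta=-(\alpha(X)+\gamma(Y))\mu$, which is $0$ by the differentiated normalization. For the second, the two $\gamma\wedge\gamma$ terms drop and one is left with $\gamma\wedge d\alpha+\gamma\wedge\mu\wedge\alpha=d\alpha\wedge\gamma+\gamma\wedge\mu\wedge\alpha$, which vanishes because the differentiated integrability gives $d\alpha\wedge\gamma=-d\gamma\wedge\alpha=-\gamma\wedge\mu\wedge\alpha$. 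Thus $\Theta=0$, which is the asserted identity. The ``in particular'' statement follows at once: for $V,W\in\xi=\ker\gamma$ one has $(\mathcal{L}_Y\gamma\wedge\gamma)(V,W)=0$, so $\delta\alpha(V,W)=0$.

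I expect the main obstacle to be purely the sign and Cartan-formula bookkeeping: getting $\mathcal{L}_X\gamma=\mu$ and $\mathcal{L}_Y\gamma=-d(\alpha(X))+\iota_Y d\gamma$ right, and arranging the two cancellations (the $d(\alpha(X))$ terms, and the collapse of $\iota_X\Theta$) so that they genuinely use the two differentiated constraints. Conceptually the proof just differentiates both defining relations of the DGLA couple and reduces the vanishing of a $2$-form to the pair of tests $\iota_X\Theta=0$ and $\gamma\wedge\Theta=0$; once this reduction is in place, the remaining computation is routine.
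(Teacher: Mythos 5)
Your argument is correct, and it reaches the identity by a genuinely different route from the paper. The paper never touches the scalar Frobenius condition directly: it encodes the defining couple as the vector-valued $1$-form $\sigma\left( t\right) =\gamma _{t}\otimes X_{t}$, invokes Corollary \ref{Type 0} and Lemma \ref{gama tens X} to translate integrability of $\xi _{t}$ into $\left[ \sigma \left( t\right) ,\sigma \left( t\right) \right] _{\mathcal{FN}}=0$, extracts the first-order term $\left[ \gamma \otimes X,\alpha \otimes X+\gamma \otimes Y\right] _{\mathcal{FN}}=0$, and then evaluates the two brackets with the explicit formula of Proposition \ref{Calcul [,]FN}; the terms $-\delta \alpha \otimes X$ and $-\mathcal{L}_{Y}\gamma \wedge \gamma \otimes X$ appear directly, and the leftover $-\left( \alpha \left( X\right) +\gamma \left( Y\right) \right) d\gamma \otimes X$ dies by the differentiated normalization. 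You instead stay entirely inside ordinary differential forms: you differentiate the classical Frobenius condition $d\gamma _{t}\wedge \gamma _{t}=0$ and the normalization, and then kill the $2$-form $\Theta =\delta \alpha +\mathcal{L}_{Y}\gamma \wedge \gamma $ by the two tests $\iota _{X}\Theta =0$ and $\gamma \wedge \Theta =0$, which together do characterize the zero $2$-form for the splitting $TL=\xi \oplus \mathbb{R}\left[ X\right] $. I checked the two computations: $\iota _{X}\Theta =-\left( \alpha \left( X\right) +\gamma \left( Y\right) \right) \iota _{X}d\gamma $ after substituting $d\gamma =\gamma \wedge \iota _{X}d\gamma $, and $\gamma \wedge \Theta =d\alpha \wedge \gamma +d\gamma \wedge \alpha $, so each test vanishes by exactly one of the two differentiated constraints, as you say. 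What your route buys is independence from the Fr\"{o}licher--Nijenhuis machinery of Sections 2--4 (only the form-level Frobenius criterion of Lemma \ref{Frobenius} is needed, and even that in its classical guise $\gamma \wedge d\gamma =0$, which is equivalent to the version stated there by contracting with $X$); what it costs is the extra reduction step and a longer bare-hands computation. The paper's route is less elementary but fits its narrative that codimension-one foliations are precisely the type-$0$ canonical solutions of Maurer--Cartan, which is the structural point being exploited later. The ``in particular'' clause is handled identically in both.
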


\begin{proof}
Since 
\begin{equation*}
\gamma _{t}\left( X_{t}\right) =\left( \gamma +t\alpha +o\left( t\right)
\right) \left( X+tY+o\left( t\right) \right) =1+t\left( \alpha \left(
X\right) +\gamma \left( Y\right) \right) +o\left( t\right) =1
\end{equation*}%
it follows that%
\begin{equation}
\alpha \left( X\right) +\gamma \left( Y\right) =0.  \label{alfa(X)+gama(Y)}
\end{equation}%
Denote $\sigma \left( t\right) =\gamma _{t}\otimes X_{t}\in \Lambda
^{1}M\otimes TM$. By Corollary \ref{Type 0} and Lemma \ref{gama tens X} the
canonical solution of the Maurer-Cartan equation in $\left( \mathcal{D}%
^{\ast }\left( L\right) ,\left[ \cdot ,\cdot \right] ,\daleth \right) $
associated to $\sigma \left( t\right) $ is of finite type $0$ for each $t$,
so $\left[ \sigma \left( t\right) ,\sigma \left( t\right) \right] _{\mathcal{%
FN}}=0$ for every $t$. We have%
\begin{eqnarray*}
\sigma \left( t\right) &=&\gamma _{t}\otimes X_{t}=\left( \gamma +t\alpha
+o\left( t\right) \right) \otimes \left( X+tY+o\left( t\right) \right) \\
&=&\gamma \otimes X+t\left( \alpha \otimes X+\gamma \otimes Y\right)
+o\left( t\right)
\end{eqnarray*}%
and%
\begin{equation*}
\left[ \sigma \left( t\right) ,\sigma \left( t\right) \right] _{\mathcal{FN}%
}=\left[ \gamma \otimes X,\gamma \otimes X\right] _{\mathcal{FN}}+2t\left( %
\left[ \gamma \otimes X,\alpha \otimes X+\gamma \otimes Y\right] _{\mathcal{%
FN}}\right) +o\left( t\right) .
\end{equation*}%
By Lemma \ref{Frobenius}, $\left[ \gamma \otimes X,\gamma \otimes X\right] _{%
\mathcal{FN}}=0$, so%
\begin{equation*}
\left[ \sigma \left( t\right) ,\sigma \left( t\right) \right] _{\mathcal{FN}%
}=2t\left( \left[ \gamma \otimes X,\alpha \otimes X+\gamma \otimes Y\right]
_{\mathcal{FN}}\right) +o\left( t\right) =0
\end{equation*}%
and it follows that%
\begin{equation}
\left[ \gamma \otimes X,\alpha \otimes X+\gamma \otimes Y\right] _{\mathcal{%
FN}}=0.  \label{[,]0}
\end{equation}%
But Proposition \ref{Calcul [,]FN} gives%
\begin{eqnarray}
\left[ \gamma \otimes X,\alpha \otimes X\right] _{\mathcal{FN}} &=&\gamma
\wedge \mathcal{L}_{X}\alpha \otimes X-\mathcal{L}_{X}\gamma \wedge \alpha
\otimes X-\left( d\gamma \wedge \iota _{X}\alpha \otimes X+\iota _{X}\gamma
\wedge d\alpha \otimes X\right)  \notag \\
&=&-\left\{ \gamma ,\alpha \right\} \otimes X-\alpha \left( X\right) d\gamma
\otimes X-d\alpha \otimes X  \notag \\
&=&-\delta \alpha \otimes X-\alpha \left( X\right) d\gamma \otimes X
\label{[,]1}
\end{eqnarray}%
and%
\begin{eqnarray*}
\left[ \gamma \otimes X,\gamma \otimes Y\right] _{\mathcal{FN}} &=&\gamma
\wedge \mathcal{L}_{X}\gamma \otimes Y-\mathcal{L}_{Y}\gamma \wedge \gamma
\otimes X \\
&&-d\gamma \wedge \iota _{X}\gamma \otimes Y-\iota _{Y}\gamma \wedge d\gamma
\otimes X \\
&=&\gamma \wedge \iota _{X}d\gamma \otimes Y-\mathcal{L}_{Y}\gamma \wedge
\gamma \otimes X \\
&&-d\gamma \otimes Y-\gamma \left( Y\right) d\gamma \otimes X.
\end{eqnarray*}%
By using Lemma \ref{Frobenius} it follows that 
\begin{equation}
\left[ \gamma \otimes X,\gamma \otimes Y\right] _{\mathcal{FN}}=-\mathcal{L}%
_{Y}\gamma \wedge \gamma \otimes X-\gamma \left( Y\right) d\gamma \otimes X
\label{[,]2}
\end{equation}%
and by (\ref{[,]0}), (\ref{[,]1}) (\ref{[,]2} and (\ref{alfa(X)+gama(Y)}) we
obtain 
\begin{equation*}
-\delta \alpha -\left( \alpha \left( X\right) +\gamma \left( Y\right)
\right) d\gamma -\mathcal{L}_{Y}\gamma \wedge \gamma =-\delta \alpha -%
\mathcal{L}_{Y}\gamma \wedge \gamma =0.
\end{equation*}
\end{proof}

\begin{remark}
A smooth hypersurface in a complex manifold is Levi flat if it admits a
foliation of codimension $1$ by complex manifolds. In \cite{PdbAI15} the
authors studied the deformations of Levi flat hypersurfaces and obtained a
second order elliptic differential equation for the infinitesimal
deformations, which was used to prove the non existence of of transversally
parallelizable Levi flat hypersurfaces in the complex projective plane. In 
\cite{Iordan2018} it is proved that the results of this paragraph lead to
the same second order elliptic differential equation for the infinitesimal
deformations of Levi flat hypersurfaces.
\end{remark}

\renewcommand\baselinestretch{1} 
\bibliographystyle{amsplain}
\bibliography{defintr,deformations,ref1}

\providecommand{\bysame}{\leavevmode\hbox to3em{\hrulefill}\thinspace}
\providecommand{\MR}{\relax\ifhmode\unskip\space\fi MR }
% \MRhref is called by the amsart/book/proc definition of \MR.
\providecommand{\MRhref}[2]{%
  \href{http://www.ams.org/mathscinet-getitem?mr=#1}{#2}
}
\providecommand{\href}[2]{#2}
\begin{thebibliography}{10}

\bibitem{PdbAI15}
P.~de~Bartolomeis and A.~Iordan, \emph{Deformations of {L}evi flat
  hypersurfaces in complex manifolds}, Ann. Sci. Ecole Norm. Sup. \textbf{48}
  (2015), no.~2, 281--311.

\bibitem{PdBandAI2017}
\bysame, \emph{On the obstruction of the deformation theory in the {DGLA} of
  graded derivations}, Complex and Symplectic Geometry ({D. Angella, C. Medori
  and A. Tomassini}, ed.), vol.~21, Springer Indam Series, 2017, pp.~95--105.

\bibitem{BartolomeisMatveev13}
P.~de~Bartolomeis and S.~V. Matveev, \emph{Some remarks on {N}ijenhuis bracket,
  formality and {K}ähler manifolds}, Advances in Geometry \textbf{13} (2013),
  571--581.

\bibitem{BartolomeisTomassini13}
P.~de~Bartolomeis and A.~Tomassini, \emph{Exotic deformations of {C}alabi-{Y}au
  manifolds}, Ann. Inst. Fourier \textbf{63} (2013), 391--415.

\bibitem{Frolicher56}
A.~Fr{\"o}licher and A.~Nijenhuis, \emph{Theory of vector valued differential
  forms. {P}art {I}. {D}erivations of the graded ring of differential forms},
  Indag.Math. \textbf{18} (1956), 338--359.

\bibitem{Gerstenhaber64}
M.~Gerstenhaber, \emph{On deformation on rings and algebras}, Ann. of Math.
  \textbf{79} (1964), 59--103.

\bibitem{Godbillon91}
C.~Godbillon, \emph{Feuilletages: Etudes g\'eom\'etriques}, Birkh\"auser
  Verlag, 1991.

\bibitem{Iordan2018}
A.~Iordan, \emph{Infinitesimal deformations of {L}evi flat hypersurfaces}, BUMI
  (2018), In Recent developments in complex and symplectic geometry, Special
  volume in memory of Paolo de Bartolomeis, to appear.

\bibitem{Kodaira61}
K.~Kodaira and D.~Spencer, \emph{Multifoliate structures}, Ann. of Math.
  \textbf{74} (1961), no.~1, 52--100.

\bibitem{Michor2008}
P.~W. Michor, \emph{Topics in differential geometry}, AMS, Providence, Rhode
  Island, 2008.

\bibitem{Nijenhuis66}
A.~Nijenhuis and R.~W. Richardson, \emph{Cohomology and deformations in graded
  {L}ie algebra}, Bull. A. M. S. \textbf{72} (1966), 1--29.

\bibitem{Sussmann1973}
H.~J. Sussmann, \emph{Orbits of families of vector fields and integrability of
  distributions}, Transactions of A. M. S. \textbf{180} (1973), 171--188.

\end{thebibliography}

\end{document}